\crefname{lemma}{Lemma}{Lemmas}
\crefname{corollary}{Corollary}{Corollaries}
\crefname{theorem}{Theorem}{Theorems}
\crefname{equation}{Equation}{Equations}
\theoremstyle{plain}%
\newtheorem{theorem}{Theorem}[section]%
\newtheorem{proposition}[theorem]{Proposition}%
\newtheorem{lemma}[theorem]{Lemma}%
\newtheorem{corollary}[theorem]{Corollary}%
\newtheorem{definition}[theorem]{Definition}%
\newcommand{\proofof}[1]{\ \\\noindent \emph{Proof of {#1}.}}
\newcommand{\Endproofof}{\hfill\qed\ \\\ \\}
\DeclareMathOperator{\Str}{Str}
\DeclareMathOperator{\Th}{Th}
\DeclareMathOperator{\Aut}{Aut}
\DeclareMathOperator{\dcl}{dcl}
\DeclareMathOperator{\acl}{acl}
\newcommand{\id}{\mathrm{id}}
\newcommand{\ELA}{\mathrm{E}_{L,A}}
\newcommand{\weakconv}{{ \xrightarrow{\mathrm{w}}}}
\renewcommand{\hat}{\widehat}
\newcommand{\llrr}[1]{{\llbracket #1 \rrbracket}}
\newcommand{\bigllrr}[1]{{\bigl \llbracket #1 \bigr \rrbracket}}
\newcommand{\Bigllrr}[1]{{\Bigl \llbracket #1 \Bigr \rrbracket}}
\def\st{\,:\,}
\def\M{{\EM{\mathcal{M}}}}
\def\cM{{\EM{\mathcal{M}}}}
\def\N{{\EM{\mathcal{N}}}}
\def\cN{{\EM{\mathcal{N}}}}
\def\cK{{\EM{\mathcal{K}}}}
\def\AA{{\EM{\mathcal{A}}}}
\def\BB{{\EM{\mathcal{B}}}}
\def\PP{{\EM{\mathcal{P}}}}
\def\cK{{\EM{\mathcal{K}}}}
\def\F{{\EM{\mc{F}}}}
\def\x{{\EM{\ol{x}}}}
\def\xx{{\EM{\ol{x}}}}
\def\y{{\EM{\ol{y}}}}
\def\uu{{\EM{\ol{u}}}}
\def\zzi{{\EM{\ol{z_i}}}}
\def\zzj{{\EM{\ol{z_j}}}}
\def\a{{\EM{\ol{a}}}}
\def\aa{{\EM{\ol{a}}}}
\def\ww{{\EM{\ol{w}}}}
\def\wwz{{\EM{\ol{w_0}}}}
\def\wwo{{\EM{\ol{w_1}}}}
\def\wwi{{\EM{\ol{w_i}}}}
\def\uu{{\EM{\ol{u}}}}
\newcommand{\tf}{t_\mathrm{full}}
\def\Lomega#1{{\EM{\mc{L}_{#1, \w}}}}
\def\Lww{\Lomega{\w}}
\def\Lwow{\Lomega{\w_1}}
\newcommand{\defn}[1]{{\bf{#1}}}
\newcommand{\defas}{{\EM{\ :=\ }}}
\def\w{\EM{\omega}}
\def\Naturals{{\EM{{\mbb{N}}}}}
\def\Nats{{\EM{{\mbb{N}}}}}
\def\^{\EM{{}^{\And}}}
\def\Or{\EM{\vee}}
\def\And{\EM{\wedge}}
\def\<{\EM{\langle}}
\def\>{\EM{\rangle}}
\def\EM#1{\ensuremath{#1}}
\def\mbb#1{\EM{\mathbb{#1}}}
\def\mc#1{\EM{\mathcal{#1}}}
\def\ol#1{\EM{\overline{#1}}}
\def\ul#1{\underline{#1}}
\newcommand{\sympar}[1]{\ensuremath{S_{#1}}}
\newcommand{\sym}{\sympar{\infty}}
\newcommand{\Hom}{\ensuremath{\mathrm{Hom}}}
\newcommand{\Full}{\ensuremath{\mathrm{Full}}}
\definecolor{MyGreen}{rgb}{.75,0,.75}
\definecolor{RealGreen}{rgb}{0,1,0}
\definecolor{DarkGreen}{rgb}{0.1,0.4,0.1}
\definecolor{ActualGreen}{rgb}{0.0,0.5,0.0}
\definecolor{MyBlue}{rgb}{0,0,1}
\definecolor{MyRed}{rgb}{1,0,0}
\definecolor{darkred}{rgb}{0.5,0,0}
\definecolor{darkgreen}{rgb}{0, 0.3,0}
\definecolor{darkblue}{rgb}{0,0,0.6}
\begin{document}

\title[Countable infinitary theories admitting an invariant measure]{Countable infinitary theories\\ admitting an invariant measure}

\author[Ackerman]{Nathanael Ackerman}
\address{
Department of Mathematics\\
Harvard University\\
One Oxford Street\\
Cambridge, MA 02138\\
USA}
\email{nate@math.harvard.edu}

\author[Freer]{Cameron Freer}
\address{
	Remine\\
	2722 Merrilee Drive Suite 300\\
	Fairfax, VA 22031\\
	USA
}
\email{cameron@remine.com}

\author[Patel]{Rehana Patel}
\address{Department of Mathematics and Computer Science\\
Wheaton College\\
Norton, MA\\
02766\\
USA}
\email{patel\_rehana@wheatoncollege.edu}


\begin{abstract}
	Let $L$ be a countable language. We characterize, in terms of definable closure, those countable theories
	$\Sigma$ of $\Lwow(L)$ for which there exists an $\sym$-invariant probability measure on the collection of 
	models of $\Sigma$ with underlying set $\Nats$.  Restricting to $\Lww(L)$,
	this answers an open question of Gaifman from 1964, via a translation between
	$\sym$-invariant measures and Gaifman's symmetric measure-models with strict equality.
	It also extends the known characterization in the case where $\Sigma$ implies a Scott sentence. 
	To establish our result, we introduce machinery for building
	invariant measures from a directed system of countable structures with measures.
\end{abstract}

\maketitle


\setcounter{page}{1}
\thispagestyle{empty}

\begin{small}
\begin{tiny}
\renewcommand\contentsname{\!\!\!\!}
\setcounter{tocdepth}{3}
\tableofcontents
\end{tiny}
\end{small}

\section{Introduction}
\label{sec:intro}

Logic and probability bear many formal resemblances, and there is a long history of model-theoretic approaches to unifying them.
A seminal work extending classical model theory to random structures
is Gaifman's 1964 paper
\cite{MR0175755}.
This paper provides
coherence conditions for assigning probabilities to formulas from some first-order language,
instantiated from a fixed domain, in a way
that respects the logical relationships between formulas.
Gaifman calls such an assignment of probabilities a \emph{measure-model};
in the case where the assignment
also respects equality, he calls it a \emph{measure-model with strict equality}.

A key case that Gaifman addresses is that of a \emph{symmetric measure-model}, where 
the probabilities assigned to a formula are invariant under arbitrary finite permutations of the instantiating domain.
He shows how to obtain a symmetric measure-model for an arbitrary countable first-order theory, but demonstrates that some of these theories admit only symmetric measure-models without strict equality.
This leaves open the following question from
\cite[\S4]{MR0175755}:
\begin{quote}
	 ``The problem of characterizing those
	 theories (i.e.\ measures having the values $0$ and $1$) which
	 possess a measure-model with strict equality, satisfying also the
	 symmetry condition, seems to be \nobreak difficult.''
\end{quote}

Gaifman's paper is concerned with first-order theories of $\Lww$, but his question is also natural in the infinitary setting of $\Lwow$ explored by Scott and Krauss in \cite{ScottKrauss} and \cite{MR0275482}.
In the present paper, we answer the more general question 
for arbitrary countable theories of $\Lwow$
in the case of countable domains, 
by developing methods that extend our earlier work on invariant measures \cite{AFP}.
In doing so, we answer Gaifman's original question, for countable domains and languages.

Our setting is the following; for more details, see \S\cref{logicaction} and \ref{ergodic-structures}.
Let $L$ be a countable language, and write $\Str_L$ for the measurable space of $L$-structures with underlying set $\Nats$. The space $\sym$ of permutations of $\Nats$ acts on $\Str_L$ by permuting the underlying set.
Given a countable collection of sentences $\Sigma \subseteq \Lwow(L)$,
we consider when there is
an
$\sym$-invariant probability measure on $\Str_L$ that assigns probability $1$ to the class of models of $\Sigma$ in $\Str_L$.

For $L$ a countable language and $X$ a countable set,
the paper
\cite{MR0175755}
shows essentially that 
every measure-model with strict equality, of formulas of $\Lwow(L)$ instantiated by elements from $X$,
is induced by some
probability distribution
on the class of $L$-structures
with
underlying set $X$.
In the case where the underlying set $X$ is $\Nats$, 
Gaifman's symmetric measure-models with strict equality correspond
to $\sym$-invariant probability measures on $\Str_L$.
An $\Lww(L)$-theory $\Sigma$ 
has a symmetric measure-model with strict equality 
when the corresponding $\sym$-invariant probability measure assigns probability $1$ to the class of models of $\Sigma$ in $\Str_L$.

\subsection{Main result}
Our main theorem, \cref{main-result}, states that for a countable language $L$ and countable theory $\Sigma$ of $\Lwow(L)$, there is an $\sym$-invariant probability measure concentrated on the class of models of $\Sigma$ in $\Str_L$ precisely when $\Sigma$ has a completion (in some countable fragment) that has trivial definable closure (for that fragment) --- a criterion that is often easy to check in practice.

This theorem is a generalization of the main result of \cite{AFP}, which considered only the case where $\Sigma$ implies a Scott sentence. Recall that a Scott sentence is a sentence $\sigma\in \Lwow(L)$ that has exactly one countable model up to isomorphism; in particular, $\{\sigma\}$, though not deductively closed, is a complete theory of $\Lwow(L)$ in the sense that it implies $\varphi$ or implies $\neg \varphi$ for each sentence $\varphi\in\Lwow(L)$. \Cref{main-result} can therefore be viewed as a generalization of \cite{AFP} to countable theories of $\Lwow(L)$ that are not necessarily complete for $\Lwow(L)$.

For an $\sym$-invariant probability measure $\mu$ on $\Str_L$, we say that $\mu$ is \emph{ergodic} if every $\mu$-almost $\sym$-invariant subset of $\Str_L$ is assigned measure $0$ or $1$ by $\mu$.
Every $\sym$-invariant probability measure can be decomposed as a convex combination of ergodic ones. Further, if there is an $\sym$-invariant probability measure concentrated on a given Borel set $B\subseteq\Str_L$, then there is an ergodic such measure, and so it often suffices to consider only the ergodic $\sym$-invariant probability measures.

\begin{theorem}
	\label{main-result}
	Let $L$ be a countable language,
	and let $\Sigma \subseteq \Lwow(L)$ be a countable 
	set of sentences.
	Then the following are equivalent:

\begin{enumerate}
\item There is an $\sym$-invariant probability measure concentrated on the class of models of $\Sigma$ in $\Str_L$.

\item There is an ergodic $\sym$-invariant probability measure concentrated on the class of models of $\Sigma$ in $\Str_L$.

\item There is a countable fragment $A \subseteq \Lwow(L)$ and a consistent theory 
		$T\subseteq A$ that is complete for $A$, such that  $\Sigma \subseteq T$
		and $T$ has trivial $A$-definable closure. 

\item For all countable fragments $A \subseteq \Lwow(L)$ such that $\Sigma \subseteq A$, there is a 
	consistent theory 
		$T \subseteq A$ that is complete for $A$, such that $\Sigma \subseteq T$ 
		and $T$ has trivial $A$-definable closure. 
\end{enumerate}
\end{theorem}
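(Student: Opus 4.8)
The plan is to prove the cycle of implications (4) $\Rightarrow$ (3) $\Rightarrow$ (2) $\Rightarrow$ (1) $\Rightarrow$ (4). Among these, (3) $\Rightarrow$ (2) is the substantive analytic content; the rest are comparatively soft. First, (4) $\Rightarrow$ (3) is immediate: fix any countable fragment $A$ with $\Sigma \subseteq A$ (e.g., the smallest such fragment), and apply (4). Likewise (2) $\Rightarrow$ (1) is trivial, since an ergodic invariant measure is in particular an invariant measure. So the real work lies in (3) $\Rightarrow$ (2) and in (1) $\Rightarrow$ (4).

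For (3) $\Rightarrow$ (2), I would reduce to the Scott-sentence case handled in \cite{AFP}. Given a countable fragment $A$ and a theory $T \subseteq A$ complete for $A$ with $\Sigma \subseteq T$ and trivial $A$-definable closure, I want to produce an ergodic $\sym$-invariant measure concentrated on $\mathrm{Mod}(\Sigma)$. Since $T$ is complete for $A$ but need not be complete for $\Lwow(L)$, its models need not all be isomorphic; the idea is to realize $T$ as a suitable limit. Concretely, I would enumerate a directed system of countable models of $T$ — using that $T$ has trivial $A$-dcl, each of these can be taken to have trivial dcl in an appropriate sense — together with invariant (or approximately invariant) measures supported on their isomorphism types, and then invoke the "machinery for building invariant measures from a directed system of countable structures with measures" advertised in the abstract. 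The key point is that trivial $A$-definable closure for $T$ is exactly the hypothesis that lets one assign, without obstruction, distinct elements of $\Nats$ to distinct points in each structure of the system coherently; this is the direct analogue of the role trivial dcl plays in \cite{AFP} for a single Scott sentence. Passing from an arbitrary invariant measure so obtained to an ergodic one is then standard via the ergodic decomposition, using that if an invariant measure concentrates on the Borel set $\mathrm{Mod}(\Sigma)$ then some ergodic component does too.

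For (1) $\Rightarrow$ (4), suppose $\mu$ is an $\sym$-invariant probability measure with $\mu(\mathrm{Mod}(\Sigma)) = 1$, and let $A \subseteq \Lwow(L)$ be an arbitrary countable fragment containing $\Sigma$. By the ergodic decomposition I may assume $\mu$ is ergodic. An ergodic $\sym$-invariant measure assigns each $A$-sentence probability $0$ or $1$ (since $\{\mathcal{M} : \mathcal{M} \models \varphi\}$ is $\sym$-invariant and Borel for $\varphi \in A$), so $T := \{\varphi \in A : \mu(\mathrm{Mod}(\varphi)) = 1\}$ is consistent, complete for $A$, and contains $\Sigma$. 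It remains to show $T$ has trivial $A$-definable closure. Here I would argue by contradiction: if some $A$-formula $\theta(\bar x, y)$ and tuple $\bar a$ witnessed a nontrivial definable closure in a model of $T$, then $\mu$-almost every structure would, by completeness of $T$, contain a tuple $\bar a$ with a unique $y$ satisfying $\theta(\bar a, y)$; an invariance/exchangeability argument — permuting the witness $y$ against other points while fixing $\bar a$ — then forces a contradiction with $\mu$ being a probability measure, exactly as in the dcl-obstruction direction of \cite{AFP}. This direction is essentially a known argument adapted to fragments, so the main obstacle is (3) $\Rightarrow$ (2) — specifically, formulating the directed-system construction so that the completeness of $T$ \emph{for $A$ only} (rather than for all of $\Lwow(L)$) is enough to control which $\Lwow(L)$-sentences, and in particular which sentences of $\Sigma$, end up with measure $1$ in the limit.
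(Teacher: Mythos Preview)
Your cycle of implications, and your arguments for $(4)\Rightarrow(3)$, $(2)\Rightarrow(1)$, and $(1)\Rightarrow(4)$, match the paper's. In particular, your sketch of $(1)\Rightarrow(4)$ is exactly the paper's \cref{condition for no invariant measure}: pass to an ergodic $\mu$, set $T=\{\varphi\in A:\mu(\llrr{\varphi})=1\}$, and derive a contradiction from a non-redundant $\varphi(\x,y)$ with $T\models(\exists\x)(\exists^{=1}y)\,\varphi(\x,y)$ by conditioning and using invariance under permutations fixing the first $|\x|$ coordinates.

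The gap is in $(3)\Rightarrow(2)$. Your plan is to reduce to the Scott-sentence case of \cite{AFP} by finding countable models of $T$ which ``can be taken to have trivial dcl in an appropriate sense'' and concentrating invariant measures on their isomorphism types. This step does not go through: trivial $A$-definable closure for $T$ does \emph{not} imply that any countable model $\M\models T$ has trivial group-theoretic definable closure, which is what \cite{AFP} requires. The paper flags this explicitly (see the discussion following \cref{trivial-dcl-formulas} and the reference to \cite{AFNP}): there are first-order theories admitting an invariant measure for which almost every sampled structure has \emph{non}-trivial group-theoretic dcl. So you cannot bootstrap from invariant measures on isomorphism classes of individual models, and the ``directed system of countable models of $T$'' you envision is not the object the paper builds.

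What the paper actually does for $(3)\Rightarrow(2)$ is orthogonal to Scott sentences. It first Morleyizes $T$ over $A$ to a pithy $\Pi_2$ first-order theory $T_A$ in a relational language $L_A$, together with a countable set $\Theta_A$ of quantifier-free types, so that models of $T$ correspond to models of $T_A$ omitting $\Theta_A$ (\cref{bijection-preserves-invariant-measures}). Trivial $A$-dcl transfers to $T_A^+$ (\cref{bijection-preserves-trivial-dcl}) and is used only in the weak form of \emph{duplication of quantifier-free formulas} (\cref{trivial dcl implies duplications}). Then, fixing a single countable model $\M\models T_A^+$, the paper builds a directed system of \emph{finite} weighted substructures of $\M$ (a ``layering'') via four interleaved transformations --- scrapwork, splitting (this is where duplication is used), witness-adding for each pithy $\Pi_2$ axiom, and language-enlarging to omit each $p\in\Theta_A$ --- and samples from the limit Borel structure. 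The resulting ergodic measure is concentrated on $T_A^+$ by \cref{Cor:asymptotically satisfying a theory} and \cref{Cor:asymptotically omitting types}, and hence on $T$; this handles your closing worry about controlling which $\Lwow(L)$-sentences get measure~$1$, since after Morleyization one only needs to verify the pithy $\Pi_2$ axioms and the type-omission, both of which are engineered stage by stage.
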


Note that when $\Sigma$ is itself 
complete 
for some countable fragment $A\subseteq \Lwow(L)$, then 
	\cref{main-result} implies that there is an $\sym$-invariant probability measure concentrated on the class of models of $\Sigma$ 
	in $\Str_L$
	if and only if $\Sigma$ has trivial $A$-definable closure.

In the case where $\Sigma$ is a set of first-order sentences,
our main result simplifies to the following. 

\begin{theorem}
	\label{main-result-FO}
	Let $\Sigma \subseteq \Lww(L)$ be a set of first-order sentences.
	Then there is an $\sym$-invariant probability measure concentrated on the class of models of $\Sigma$ in $\Str_L$ if and only if there is some complete consistent theory 
	$T\subseteq\Lww(L)$ with $\Sigma \subseteq T$ such that $T$ has trivial $\Lww(L)$-definable closure.
\end{theorem}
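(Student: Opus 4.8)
The plan is to derive \cref{main-result-FO} as the special case of \cref{main-result} obtained by restricting attention to the first-order fragment. The key observation is that $\Lww(L)$ is itself a countable fragment of $\Lwow(L)$ (when $L$ is countable), so $\Sigma \subseteq \Lww(L)$ is in particular a countable set of sentences of $\Lwow(L)$, and \cref{main-result} applies. The forward direction is immediate: if there is an $\sym$-invariant probability measure concentrated on the class of models of $\Sigma$, then by $(1)\Rightarrow(4)$ of \cref{main-result}, applied with the countable fragment $A = \Lww(L)$ (which indeed contains $\Sigma$), there is a consistent $T \subseteq \Lww(L)$ complete for $\Lww(L)$ with $\Sigma \subseteq T$ and $T$ having trivial $\Lww(L)$-definable closure. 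Since a theory that is complete for the fragment $\Lww(L)$ is exactly a complete consistent first-order theory, this is precisely the conclusion wanted.

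For the converse, suppose there is a complete consistent first-order theory $T \subseteq \Lww(L)$ with $\Sigma \subseteq T$ and trivial $\Lww(L)$-definable closure. Then $T$ is a consistent theory complete for the countable fragment $A = \Lww(L)$, it contains $\Sigma$, and it has trivial $A$-definable closure, so condition $(3)$ of \cref{main-result} is witnessed (by this $A$ and this $T$). Hence $(3)\Rightarrow(1)$ of \cref{main-result} yields an $\sym$-invariant probability measure concentrated on the class of models of $\Sigma$ in $\Str_L$, as desired.

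There is essentially no obstacle here beyond bookkeeping: the only things to check are that $\Lww(L)$ genuinely is a countable fragment in the sense used in \cref{main-result} (true for countable $L$), that "complete for the fragment $\Lww(L)$" coincides with the usual notion of a complete first-order theory, and that "$\Lww(L)$-definable closure" coincides with the ordinary notion of first-order definable closure, so that "trivial $\Lww(L)$-definable closure" unwinds to "trivial $\acl$/$\dcl$ in the first-order sense" as stated. These are all definitional matters once the relevant notions from \S\cref{logicaction} are in hand; I would simply note them and cite \cref{main-result}. If desired, one could also remark that the ergodicity clause $(2)$ gives the slightly stronger statement that the measure may be taken ergodic, though the statement of \cref{main-result-FO} does not assert this.
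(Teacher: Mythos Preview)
Your proposal is correct and follows essentially the same approach as the paper: both specialize \cref{main-result} to the fragment $A = \Lww(L)$, using $(1)\Rightarrow(4)$ for the forward direction and $(3)\Rightarrow(1)$ for the converse. Your additional bookkeeping remarks about $\Lww(L)$ being a countable fragment and the notions of completeness and definable closure coinciding with the usual first-order ones are accurate and make the specialization explicit.
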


This answers Gaifman's question for countable languages and domains.

\subsection{Outline of the paper}
We begin, in 
\cref{sec:preliminaries},
by providing
definitions regarding our setting, including the notion of an \emph{ergodic structure}, i.e., an ergodic $\sym$-invariant probability measure on $\Str_L$.
We describe
a process we call \emph{pithy $\Pi_2$ Morleyization} that allows us to work with languages and theories that
have 
nice properties which allow us to carry out the main construction of our paper. We also provide basic results about trivial definable closure and its relation to duplication of formulas.

In 
\cref{inv-meas-from-str},
we describe a method for building ergodic structures via sampling from a Borel structure equipped with a measure.

Next, in Sections \ref{sec:layerings} and \ref{sec:existence},
a countable consistent theory that is complete for a countable fragment and has trivial definable closure for that fragment, 
we show how to build a Borel structure and measure such that sampling from it yields 
an \emph{ergodic model} of the theory, i.e.,
an ergodic structure that concentrates on the collection of models of the theory.
We do so by building a special type of directed system of finite structures with measures whose limit is such a Borel structure along with a measure.

In 
\cref{nonexistence},
we show that if a countable consistent theory that is complete for some countable fragment has non-trivial definable closure for that fragment,
then it does not admit an ergodic model.

Finally, in 
\cref{classification},
we combine the positive and negative results from 
Sections \ref{sec:existence} and \ref{nonexistence}
to obtain our main result, \cref{main-result}, and its corollary for first-order languages, \cref{main-result-FO}.

\section{Preliminaries}
\label{sec:preliminaries}

In this section, we introduce and develop notions involving invariant measures and certain kinds of theories that will be useful in our constructions.

Throughout this paper, let $L$ be a countable language. 
We allow relation symbols to be $0$-ary. The instantiation of a $0$-ary relation in an $L$-structure is the assignment True or False; such relations allow us to simplify the Morleyization construction in \cref{pithy}.
For a nested pair of countable languages $L' \subseteq L''$ and an $L''$-structure $\N$, we write $\N|_{L'}$ to denote the reduct of $\N$ to the language $L'$.

We will formally allow formulas only to contain $\And, \bigwedge, \neg, \exists$, and we use $\Or, \bigvee, \forall$ as shorthand in the standard way. This simplification loses no generality, but sometimes allows us a cleaner presentation, as it reduces the number of cases to deal with for inductions on formulas.

Recall that a \defn{fragment} (of $\Lwow(L)$) is a subset $A \subseteq \Lwow(L)$ that is closed under subformulas as well as the logical operations of $\And, \Or, \neg, (\exists x)$ and $(\forall x)$. 

We say that a set of sentences $T \subseteq \Lwow(L)$ is a \defn{theory} when it is consistent.
In general, we do not require theories to be deductively closed or complete (for either a countable fragment or for $\Lwow(L)$).

Let $A$ be a fragment.  An \defn{$A$-theory $T$} is a theory
$T$ such that $T \subseteq A$. 
An $A$-theory $T$ is \defn{complete for $A$} when for every sentence $\sigma \in A$, either
$T\models \sigma$ or $T\models \neg \sigma$; in this case we say that $T$ is a \defn{complete $A$-theory}.
Note that we do not require even complete $A$-theories to be deductively closed; this will be important
when we 
work with $A$-theories 
of a restricted syntactic form (namely, pithy $\Pi_2$, as described in \cref{pithy}).

For a measure $m$ and singleton set $\{x\}$, we often abbreviate $m(\{x\})$ by the notation $m(x)$,
and for a function $i$ we similarly abbreviate the inverse image $i^{-1}(\{x\})$ by $i^{-1}(x)$.
Likewise, we write $Y \cup x$ to denote $Y \cup \{x\}$ and $Y \setminus x$ to denote $Y \setminus \{x\}$ 
We write \emph{qf-type} to mean  a quantifier-free type.


\subsection{The logic action on the measurable space $\Str_L$}
\label{logicaction}
The measurable space $\Str_L$ 
has underlying space the collection of $L$-structures with underlying set $\Naturals$, and
$\sigma$-algebra generated by subbasic open sets of the form
\[
\llrr{\varphi(n_1, \dots, n_j)} \defas
 \{\M \in \Str_L: \M\models \varphi(n_1, \dots, n_j)\},
\]
where $\varphi$ is an atomic $L$-formula, $j$ is its number of free variables, and $n_1, \ldots, n_j \in \Naturals$. 
In fact, $\llrr{\varphi(n_1, \ldots, n_j)}$ is Borel for arbitrary formulas $\varphi$ of $\Lwow(L)$, by \cite[Proposition~16.7]{MR1321597}.  
Given a countable theory $T$, we write $\llrr{T}$ to mean $\llrr{\bigwedge_{\varphi \in T}\varphi}$.

Let $\sym$ denote the permutation group of $\Naturals$.  The \emph{logic action} of $\sym$ on $\Str_L$ is the action induced by permutation of the underlying set $\Naturals$;  for more details, see \cite[\S16.C]{MR1321597}.  
We say that a Borel probability measure $m$ on $\Str_L$ is \defn{invariant} when it is invariant under the action of $\sym$, i.e., 
$m(X) = m(g\cdot X)$
for every Borel set $X \subseteq \Str_L$ and $g \in \sym$; we often simply call such an $m$ an \defn{invariant measure}.
A probability measure $m$ is \defn{concentrated} on a Borel set $X \subseteq \Str_L$ when $m(X) = 1$.

The Lopez-Escobar theorem 
(see, e.g., \cite[Theorem~16.8]{MR1321597}) states 
that a set $X\subseteq \Str_L$ is Borel and invariant under the logic action if and only if there is a sentence $\varphi \in \Lwow(L)$ such that $X = \llrr{\varphi}$.

We will sometimes speak of
an invariant measure concentrated on a sentence $\varphi$, by which we mean an invariant measure concentrated on $\llrr{\varphi}$, 
the class of models of $\varphi$ in $\Str_L$.
We will likewise speak of an invariant measure concentrated on a countable theory $T$, by which we mean an invariant measure concentrated on $\llrr{T}$, 
the class of models of $T$ in $\Str_L$,
and in this case say that $T$ \defn{admits} an invariant measure.
Given a countable collection $\Theta$ of qf-types, we say that an invariant measure \defn{omits $\Theta$} when it is concentrated on the class of structures in $\Str_L$ that omit every qf-type in $\Theta$, i.e., on 
	$ \bigllrr{\bigwedge_{p(\x)\in \Theta} (\forall \x) \neg p(\x)}$.

\subsection{Ergodic structures}
\label{ergodic-structures}

In order to determine which countable theories admit an invariant measure, it will suffice to ask which admit an 
ergodic invariant measure.

\begin{definition}
We say that a probability measure $\mu$ on $\Str_L$ is \defn{ergodic} if $\mu(B) \in \{0, 1\}$ for all Borel sets $B$ that satisfy 
$\mu(B \Delta \tau^{-1}(B)) = 0$ for every $\tau\in \sym$.
\end{definition}
In other words, an ergodic probability measure $\mu$ is one that does not assign intermediate measure to any $\mu$-almost invariant set.

We will use the following standard result in the proof of \cref{main-result}; we include its proof for completeness.

\begin{lemma}
	If $\Sigma\subseteq \Lwow(L)$ is countable set of sentences that admits an $\sym$-invariant measure, then $\Sigma$ admits an ergodic $\sym$-invariant measure.
\label{ergodic-lemma}
\end{lemma}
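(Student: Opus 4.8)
The plan is to invoke the ergodic decomposition theorem for invariant measures under the logic action of $\sym$ on $\Str_L$. Recall that $\sym$ is a Polish group and $\Str_L$ is a standard Borel space on which it acts in a Borel fashion, so by the standard ergodic decomposition theorem (see, e.g., \cite{MR1321597} or Varadarajan's theorem), every $\sym$-invariant Borel probability measure $m$ on $\Str_L$ can be written as a barycenter $m = \int \mu \, d\nu(\mu)$, where $\nu$ is a Borel probability measure on the space of ergodic $\sym$-invariant probability measures on $\Str_L$.

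Assume $\Sigma$ admits an $\sym$-invariant measure $m$, so $m$ is concentrated on $\llrr{\Sigma}$, i.e., $m(\llrr{\Sigma}) = 1$. The set $\llrr{\Sigma}$ is Borel (as noted, it equals $\llrr{\bigwedge_{\varphi\in\Sigma}\varphi}$, which is Borel by \cite[Proposition~16.7]{MR1321597}) and is $\sym$-invariant by the Lopez-Escobar theorem. Applying the ergodic decomposition $m = \int \mu\, d\nu(\mu)$, we get
\[
1 = m(\llrr{\Sigma}) = \int \mu(\llrr{\Sigma}) \, d\nu(\mu).
\]
Since $0 \le \mu(\llrr{\Sigma}) \le 1$ for every $\mu$, this forces $\mu(\llrr{\Sigma}) = 1$ for $\nu$-almost every ergodic $\mu$. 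In particular, the set of ergodic $\mu$ with $\mu(\llrr{\Sigma}) = 1$ has $\nu$-measure $1$, hence is nonempty. Any such $\mu$ is an ergodic $\sym$-invariant probability measure concentrated on the class of models of $\Sigma$, so $\Sigma$ admits an ergodic invariant measure.

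The main point requiring care is the availability of the ergodic decomposition in exactly this setting: one needs that the action of $\sym$ on $\Str_L$ is a Borel action of a Polish group on a standard Borel space, which is standard (see \cite[\S16.C]{MR1321597}), and that the decomposition measure $\nu$ indeed lives on ergodic measures concentrated appropriately. Since $\llrr{\Sigma}$ is itself an invariant Borel set, restricting attention to it causes no difficulty. This is a routine application of a well-known theorem, which is why the authors state it as a lemma included "for completeness"; the only genuinely substantive ingredient is the classical ergodic decomposition theorem, which I would cite rather than reprove.
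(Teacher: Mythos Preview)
Your proof is correct and follows essentially the same approach as the paper: both invoke the ergodic decomposition of an invariant measure and use that $\llrr{\Sigma}$ is an invariant Borel set to conclude that some ergodic component is concentrated on $\llrr{\Sigma}$. The paper argues by contradiction (using the 0--1 law for ergodic measures on invariant sets to pass from ``not concentrated on $\llrr{\Sigma}$'' to ``concentrated on its complement''), whereas you compute the barycenter integral directly; these are cosmetic variations of the same argument.
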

\begin{proof}
Let $\mu$ be an invariant measure concentrated on $\Sigma$,
and suppose that $\Sigma$ does not admit an ergodic invariant measure.
	The measure $\mu$ can be decomposed into a mixture of ergodic invariant measures on $\Str_L$ (see, e.g., \cite[Lemma~A1.2 and Theorem~A1.3]{MR2161313}).
	By hypothesis, none of the measures in the decomposition is concentrated on $\Sigma$. 
	By the Lopez-Escobar theorem, $\llrr{\Sigma}$ is Borel invariant, and so
	the measures in the decomposition must therefore all be concentrated on the complement of $\llrr{\Sigma}$.
	But then $\mu$, which is a mixture of the measures in the decomposition, is concentrated on the complement of $\llrr{\Sigma}$, a contradiction.
\end{proof}

Ergodic invariant measures can be thought of as ``probabilistic'' structures, as we now describe.

\begin{definition}
	An \defn{ergodic $L$-structure} is an ergodic invariant measure on $\Str_L$.
	An ergodic $L$-structure $\mu$ is said to \defn{almost surely satisfy} a sentence $\varphi\in\Lwow(L)$
	when $\mu(\llrr{\varphi}) = 1$.
\end{definition}

\begin{lemma}
	\label{complete-consistent}
	Let 
	$\mu$ be an ergodic $L$-structure, and define
	\[
		\Th(\mu) \defas \{\varphi\in \Lwow(L) \st \mu(\llrr{\varphi})=1\}.
	\]
	Then $\Th(\mu)$
	is a complete,
	deductively closed
	$\Lwow(L)$-theory.
\end{lemma}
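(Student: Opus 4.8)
The plan is to check the three assertions about $\Th(\mu)$ more or less separately: completeness will come straight from ergodicity, while deductive closure and consistency will come from countable additivity of $\mu$ together with the fact that derivations in $\Lwow(L)$ are countable objects. For completeness, fix an arbitrary sentence $\varphi\in\Lwow(L)$. The set $\llrr{\varphi}$ is Borel by \cite[Proposition~16.7]{MR1321597}, and it is invariant under the logic action since isomorphic structures satisfy the same sentences; hence $\mu\bigl(\llrr{\varphi}\,\Delta\,\tau^{-1}(\llrr{\varphi})\bigr)=\mu(\emptyset)=0$ for every $\tau\in\sym$, and ergodicity of $\mu$ gives $\mu(\llrr{\varphi})\in\{0,1\}$. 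Since $\llrr{\neg\varphi}=\Str_L\setminus\llrr{\varphi}$, exactly one of $\mu(\llrr{\varphi})$ and $\mu(\llrr{\neg\varphi})$ equals $1$, so exactly one of $\varphi$, $\neg\varphi$ belongs to $\Th(\mu)$; in particular $\Th(\mu)$ decides every sentence of $\Lwow(L)$ and never contains a sentence together with its negation.

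For deductive closure and consistency, the point to keep in mind is that $\Th(\mu)$ is typically an uncountable set, so one must work through its countable subsets, where countable additivity of $\mu$ applies: for any countable $\Sigma_0\subseteq\Th(\mu)$ the set $\llrr{\Sigma_0}=\bigcap_{\psi\in\Sigma_0}\llrr{\psi}$ is a countable intersection of Borel sets of $\mu$-measure $1$, so $\mu(\llrr{\Sigma_0})=1$, and in particular $\llrr{\Sigma_0}\neq\emptyset$. Now any deduction $\Th(\mu)\vdash\varphi$ uses only countably many hypotheses, say a countable $\Sigma_0\subseteq\Th(\mu)$, so $\Sigma_0\vdash\varphi$ and hence, by soundness, $\llrr{\Sigma_0}\subseteq\llrr{\varphi}$; therefore $\mu(\llrr{\varphi})=1$, i.e.\ $\varphi\in\Th(\mu)$, which is deductive closure. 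Consistency follows by the same reasoning applied to a derivation of a contradiction: if $\Th(\mu)$ were inconsistent, then already some countable $\Sigma_0\subseteq\Th(\mu)$ would be inconsistent, forcing $\llrr{\Sigma_0}=\emptyset$ and contradicting $\mu(\llrr{\Sigma_0})=1$.

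I do not expect a substantive obstacle here; the only thing to be careful about is the passage to countable subsets forced by the potential uncountability of $\Th(\mu)$, which is exactly why the argument leans on derivations being countable rather than on any compactness-type property (which $\Lwow$ lacks). It is also worth noting that ``consistent'' and ``deductively closed'' should be read syntactically --- equivalently, that every countable subset of $\Th(\mu)$ has a model: for uncountable theories of $\Lwow(L)$ this is strictly weaker than possessing a model (an ergodic $\mu$ need not concentrate on a single isomorphism class, in which case $\Th(\mu)$ may have no model at all), but it is precisely the form of the statement that gets used in the sequel.
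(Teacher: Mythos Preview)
Your proof is correct and follows essentially the same route as the paper: ergodicity together with the Borel-invariance of $\llrr{\varphi}$ for completeness (the paper packages this as an appeal to the Lopez-Escobar theorem), and $\sigma$-additivity for deductive closure, from which consistency follows. Your explicit handling of the passage to countable subsets of $\Th(\mu)$, and the closing remark on the syntactic reading of consistency for a possibly uncountable theory, are useful clarifications that the paper's two-line proof leaves implicit.
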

\begin{proof}
	Because the measure $\mu$ is ergodic, for any sentence $\varphi\in\Lwow(L)$,
	exactly one of $\varphi\in \Th(\mu)$ or $\neg \varphi\in \Th(\mu)$ holds, by the Lopez-Escobar theorem.
	Deductive closure follows from $\sigma$-additivity, and so $\Th(\mu)$ is also consistent, hence an $\Lwow(L)$-theory.
\end{proof}

\begin{definition}
	We say that $\mu$ is an \defn{ergodic model} of $T$
	when $T \subseteq \Th(\mu)$; in that case we say that $\Th(\mu)$ is the theory of $\mu$.
\end{definition}

An ergodic structure therefore has a complete $\Lwow(L)$-theory, and it is an ergodic model of this theory.
This provides some justification for considering ergodic structures as probabilistic generalizations of classical model-theoretic structures.

Finally, we say that an ergodic $L$-structure almost surely has a property $P$ when it assigns measure $1$ to the collection of elements of $\Str_L$ having property $P$. For example, we say that $\mu$ almost surely omits a countable collection of qf-types $\Theta$ when 
	$\mu\big(\bigllrr{\bigwedge_{p(\x)\in \Theta} (\forall \x) \neg p(\x)}\bigr) = 1$.
We will be especially interested in ergodic models of a given theory that almost surely omit a particular collection of qf-types.

\subsection{Morleyization and Pithy $\Pi_2$ theories}
\label{pithy}

In our main construction it will be important to work with a first-order theory consisting of sentences of a specific form which can be thought of as ``one-point extension axioms''. It will also be important that our measure concentrate on the collection of models of a theory of one-point extension axioms that omits a countable collection of (non-principal) qf-types, which we obtain by a variant of a standard construction.

The notion of \emph{non-redundant} tuples, formulas, structures, and theories will be important 
when formulating the notions of \emph{duplication of qf-types}
(\cref{newstrongamalgamation}) and \emph{layering transformations}
(\cref{transformation}).
\begin{definition}
	\label{nonredundant-def}
	A tuple $a_1 \cdots a_n$ is 
	\defn{non-redundant} if $a_i \neq a_j$ for $i\neq j$.
A formula with free variables $x_1, \ldots, x_n$
	is 
	\defn{non-redundant} if it implies the formula $\bigwedge_{1\le i< j\le n} (x_i \neq x_j)$.
	A structure in a relational language
	is \defn{non-redundant} if each relation holds only on non-redundant tuples, and a theory
	in a relational language
	is \defn{non-redundant} if all of its models are non-redundant.
\end{definition}

We now introduce the special form of sentences, which we call pithy $\Pi_2$. We then show that for any 
countable fragment $A$ of
$\Lwow(L)$, there is a relational language $L_A$, a pithy $\Pi_2$ first-order $L_A$-theory $\Th_A$,
and a countable collection of qf-types $\Theta_A$ of $L_A$, such that 
each $L$-structure has a common definable expansion with some
model of $\Th_A$ omitting $\Theta_A$.

\begin{definition}
A sentence $\varphi \in \Lwow(L)$ is said to be \defn{pithy $\Pi_2$} if it is of the form $(\forall\xx)(\exists y)\psi(\xx, y)$ where $\psi(\xx,y)$ is quantifier-free, and $\xx$ is a finite (possibly empty) sequence of variables. A countable theory $T$ is said to be pithy $\Pi_2$ when every sentence in $T$ is pithy $\Pi_2$.  
\end{definition}

Throughout the rest of this subsection, let $A$ be a countable fragment of $\Lwow(L)$ and let $T$ be an $A$-theory.

We now define the relational language $L_A$. 
For $m\in\Nats$ write $[m] \defas \{1, \ldots, m\}$.
Let $I$ be the set of triples $( \varphi(\x), \iota, \kappa)$ for which there exist $n\in \Nats$ and $r\le n$ such that:
\begin{itemize}
	\item $\varphi(\x) \in A$ is an $L$-formula with $\x = x_1 \cdots x_n$ its tuple of distinct free variables,
	\item $\iota \colon [n] \to [r]$ is a surjection, and
	\item $\kappa \colon [r] \to [n]$ is an injection such that $\iota \circ \kappa$ is the identity on $[r]$.
\end{itemize}

Then define the relational language 
\[
	L_A \defas \{
		Q_{\varphi(\xx), \iota, \kappa} \ \st  \ (\varphi(\x), \iota, \kappa)\in I\},
\]
where each $Q_{\varphi(\xx), \iota, \kappa}$ is a relation symbol of arity $|\xx|$.

For each surjection $\iota\colon [n] \to [r]$, define
\[
	\mathrm{eq}_\iota \defas 
\bigwedge_{j, \ell \in [n]\st \iota(j) = \iota(\ell)} (x_j = x_\ell) 
\ \wedge
\ \bigwedge_{j, \ell \in [n] \st \iota(j) \neq \iota(\ell)} (x_j \neq x_\ell).
\]
Now for each  $\eta(\x) \in A$, 
define the formula
\[ R_{\eta(\x)}(\x) \defas \bigvee_{
	\iota, \kappa \st (\eta(\x), \iota, \kappa)\in I }
\{ 
\mathrm{eq}_\iota \wedge
	Q_{\eta(\x), \iota, \kappa}(x_{\kappa(1)}\cdots x_{\kappa(r)})
	\},\]
	where $r$ is the arity of $Q_{\eta(\x), \iota, \kappa}$ and $\x = x_1\cdots x_n$ is the tuple of distinct free variables of $\eta$.

After two observations, we next define the theory $\Th_A$ connecting $L_A$ to $L$ (which will be the pithy $\Pi_2$ Morleyization of the empty theory).
	First, note that we have described multiple relation symbols that our theory will prove equivalent; 
	namely,
	whenever
	$(\varphi(\x), \iota, \kappa), (\varphi(\x), \iota, \kappa')\in I$ we have
	\[ \Th_A \models  Q_{\varphi(\x), \iota, \kappa}(\y) \leftrightarrow
	Q_{\varphi(\x), \iota, \kappa'}(\y),
	\]  though this does not pose a problem (and simplifies naming of the $Q$ relation symbols).

	Second, note that the language $L_A$ is ``built from'' $L$ in the sense that for each atomic $L$-formula $\eta(\x)$ there is 
	a first-order quantifier-free $L_A$-formula $R_\eta(\x)$
	that 
	$\Th_A$ will prove equivalent to $\eta(\x)$.
	However, $L_A$
	does not literally contain $L$ as a sublanguage (and we will later refer to the language $L_A \cup L$ when needed).
Note that when $\varphi$ is a sentence, then each $Q_{(\varphi(\x), \iota, \kappa)}$ is a $0$-ary relation symbol, and 
$R_\varphi$ is a $0$-ary formula, whose presence simplifies the technicalities of the pithy $\Pi_2$ Morleyization.  

Define the pithy $\Pi_2$-theory $\Th_A$ to be the collection of sentences
\begin{itemize}

	\item $\displaystyle 
		(\forall \y)[Q_{(\zeta(\uu), \iota, \kappa)}(\y) \to \bigwedge_{j, \ell \in [k] \st j\neq \ell} (y_j \neq y_\ell)]$
\end{itemize}
		for all $(\zeta(\uu), \iota, \kappa)\in I$ where $\y = y_1\cdots y_k$ is the tuple of distinct free variables in $Q_{(\zeta(\uu), \iota, \kappa)}$, and
\begin{itemize}
\item $(\forall \xx)[R_{\neg \psi}(\xx) \leftrightarrow \neg R_\psi(\xx)]$,
\item $(\forall \xx)[R_{\xi_0 \And \xi_1}(\xx) \leftrightarrow \bigl(R_{\xi_0}(\wwz) \And R_{\xi_1}(\wwo)\bigr)]$,
\item $(\forall \xx)[R_{(\exists y)\varphi}(\xx) \leftrightarrow (\exists y) R_{\varphi}(\xx,y)]$,
\item $(\forall \xx)[R_{(\exists y)\psi}(\xx) \leftrightarrow (\exists y) R_{\psi}(\xx)]$, and
\item $(\forall \xx)[R_{\bigwedge_{i \in \w}\eta_i}(\xx) \rightarrow R_{\eta_j}(\zzj)]$ for all $j\in\omega$,
\end{itemize}
for all $\xi_0, \xi_1, \psi, \varphi, \bigwedge_{i \in \w}\eta_i\in A$; where $\xx$ is a tuple containing precisely the free variables of $\psi$; 
where the tuple $\wwi$ contains precisely the free variables of $\xi_i$ 
for $i = 0,1$ and $\xx$ contains precisely the variables occurring in $\wwz$ or $\wwo$,
where the tuple $\zzi$ contains precisely the free variables of $\eta_i$  for each $i \in \w$ and $\xx$ contains precisely the variables occurring in some $\zzi$; and where the free variables  of $\varphi$ are precisely the variables in $\xx y$, with $y\not\in\xx$.

We also define a theory $\ELA$ that will allow us to restrict to the case of relational languages in our main constructions. Let $\ELA$ be the $(L_A \cup L)$-theory consisting of
	$(\forall\ww) [R_{\zeta(\ww)}(\ww) \leftrightarrow \zeta(\ww)]$, 
		where $\zeta(\ww)$ is an atomic $L$-formula whose free variables are precisely those in the tuple $\ww$.

\begin{lemma}
	\label{nonredundant-ThA}
$\Th_A$ is a non-redundant pithy $\Pi_2$ first-order $L_A$-theory.
\end{lemma}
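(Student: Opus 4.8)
The claim is that $\Th_A$ is (i) a theory (i.e.\ consistent), (ii) non-redundant, (iii) pithy $\Pi_2$, and (iv) first-order in the relational language $L_A$. Parts (iii) and (iv) are essentially syntactic bookkeeping: every axiom listed is either of the form $(\forall\y)[Q_{\dots}(\y)\to\bigwedge(y_j\neq y_\ell)]$ or of the form $(\forall\xx)[R_\chi(\xx)\leftrightarrow\vartheta]$ (or a one-directional implication for the $\bigwedge$ axioms), where each $R_\chi$ unfolds by its definition into a finite disjunction of formulas $\mathrm{eq}_\iota\wedge Q_{(\chi,\iota,\kappa)}(\dots)$, and the right-hand side $\vartheta$ has at most one existential quantifier prefixing a quantifier-free matrix. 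So I would first observe that each axiom, after replacing each $R$-subformula by its (quantifier-free) definition and pushing the single $(\exists y)$ to the front using the fact that the matrix is quantifier-free, can be put in the form $(\forall\xx)(\exists y)\psi(\xx,y)$ with $\psi$ quantifier-free in $L_A$ (when there is no existential, prepend a dummy $(\exists y)$ with $y$ not occurring, using the standard convention that this is logically equivalent). This handles (iii) and (iv) simultaneously, modulo checking that the $R$-definitions only ever introduce $L_A$-relation symbols and equality, which is immediate from the definition of $R_{\eta(\x)}$ and $L_A$.

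\textbf{Non-redundancy (ii).} This is where the first family of axioms does its work: for every $(\zeta(\uu),\iota,\kappa)\in I$, the axiom $(\forall\y)[Q_{(\zeta(\uu),\iota,\kappa)}(\y)\to\bigwedge_{j\neq\ell}(y_j\neq y_\ell)]$ forces every basic relation $Q_{(\zeta,\iota,\kappa)}$ to hold only on non-redundant tuples, which is exactly the definition of a non-redundant structure. I would note that this covers all relation symbols of $L_A$ by construction (every symbol of $L_A$ is some $Q_{(\varphi(\x),\iota,\kappa)}$ with $(\varphi(\x),\iota,\kappa)\in I$), so any model of $\Th_A$ is non-redundant, and hence $\Th_A$ is a non-redundant theory per \cref{nonredundant-def}.

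\textbf{Consistency (i).} This is the main obstacle, since ``theory'' by the paper's convention means ``consistent,'' and we must exhibit a model. The natural move is to take an arbitrary $L$-structure $\M$ on $\Nats$ (the one-element structure, or any structure, will do) and expand it to an $(L_A\cup L)$-structure $\M^+$ by interpreting each $Q_{(\varphi(\x),\iota,\kappa)}$ so that $\ELA$ holds and $R_\eta$ tracks $\eta$ for \emph{all} $\eta\in A$, not just atomic ones. Concretely: define $Q_{(\varphi(\x),\iota,\kappa)}(b_1\cdots b_r)$ to hold iff $b_1\cdots b_r$ is non-redundant and $\M\models\varphi(a_1,\dots,a_n)$ where $a_j\defas b_{\iota(j)}$ (so that $a_{\kappa(i)}=b_i$, consistent with $\iota\circ\kappa=\id$). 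One then checks by induction on the structure of formulas in the fragment $A$ that $\M^+\models(\forall\x)[R_{\eta(\x)}(\x)\leftrightarrow(\eta(\x)\text{ holds of the corresponding tuple in }\M)]$, which uses that $A$ is closed under subformulas and the five logical operations; the $\mathrm{eq}_\iota$ clauses in the definition of $R_{\eta(\x)}$ are precisely what reconciles the relation $Q$, which ``lives on'' the non-redundant quotient tuple, with the formula $\eta$, which may have a redundant free-variable tuple. Given this, each axiom of $\Th_A$ becomes a tautology about $\M$: the $\leftrightarrow$-axioms reduce to logical equivalences like $\neg\psi\leftrightarrow\neg\psi$, $(\xi_0\wedge\xi_1)\leftrightarrow(\xi_0\wedge\xi_1)$, $(\exists y)\varphi\leftrightarrow(\exists y)\varphi$, and the $\bigwedge$-implication reduces to $\bigwedge_i\eta_i\to\eta_j$. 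Hence $\M^+\models\Th_A$ (indeed $\M^+\models\Th_A\cup\ELA$), so $\Th_A$ is consistent. The routine-but-careful part is the induction in the previous sentence, including the variable-management in the clauses involving $\wwz,\wwo$ and the $\zzj$'s, but no genuinely new idea is needed.
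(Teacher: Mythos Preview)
Your proof is correct and takes essentially the same approach as the paper for non-redundancy and the pithy $\Pi_2$ form: the first family of axioms forces every $Q$-relation (and hence every $L_A$-relation) onto non-redundant tuples, and the remaining axioms are pithy $\Pi_2$ once one unfolds each $R_\chi$ into its quantifier-free definition in terms of the $Q$'s. The one difference is organizational: you explicitly establish consistency by constructing the expansion $\M^+$ of an arbitrary $L$-structure, whereas the paper's proof simply asserts ``$\Th_A$ is a first-order $L_A$-theory by definition'' and defers the actual model construction to the immediately following \cref{Lemma: Pithy Pi-2 Morleyization} (your $\M^+$ is exactly their $\M_A$). Your version is more self-contained at this point in the exposition; the paper's is terser but relies on the reader looking one lemma ahead.
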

\begin{proof}
	$\Th_A$ is a first-order $L_A$-theory by definition, and is non-redundant by its first set of axioms (involving every relation symbol $Q_{(\zeta(\uu), \iota, \kappa)}$ in the language). The theory $\Th_A$ is also pithy $\Pi_2$ by the form of the remaining axioms and the fact that each formula $R_\zeta$ has a quantifier-free definition in terms of the $Q$ relations symbols.
\end{proof}

We now define a countable collection of qf-types $\Theta_A$ in the language $L_A$. 
For every formula of the form $\bigwedge_{i \in \w} \eta_i \in A$, define the qf-type
$p_{\bigwedge_{i \in \w} \eta_i}$ by
\[p_{\bigwedge_{i \in \w} \eta_i}(\xx) \defas \{\neg R_{\bigwedge_{i \in \w} \eta_i}(\xx)\} \cup 
\{R_{\eta_i}(\zzi) \st i \in \w\},\]
where $\xx$ and $\zzi$ are tuples of variables as above.
Then define the collection
\[
	\Theta_A \defas \{ p_{\bigwedge_{i \in \w} \eta_i}\st \bigwedge_{i \in \w} \eta_i \in A\}.\]
Note that if there is some $n\in\omega$ such that
\[
	\models (\forall \xx) \Bigl (\bigwedge_{i \in \w} \eta_i(\zzi) \leftrightarrow \bigwedge_{i \leq n} \eta_i(\zzi) \Bigr),
\]
then $p_{\bigwedge_{i \in \w} \eta_i}$ is inconsistent with $\Th_A \, \cup \, \ELA$. However, this is not a problem, as then the qf-type is automatically omitted.  

Define the $L_A$-theory $\Th_A^+$ to be 
\[
	\Th_A^+ \ \defas\ 	\Th_A \,\cup\,
		\{ (\forall \xx) \neg p(\xx) \st p(\xx) \in \Theta_A     \},
\]
	where the tuple of variables $\xx$ is of the appropriate length for each qf-type $p(\xx) \in \Theta_A$.
In fact, a model of $\Th_A^+$ is precisely the 
\emph{pithy $\Pi_2$
Morleyization} of some $L$-structure 
	(see \cref{Morley-def} below).

An easy induction on $L$-formulas shows 
that in any model $\cK$ of $\Th_A^+ \, \cup \, \ELA$, for all formulas $\psi \in A$, we have
\[  \cK \models (\forall \x) \bigl(\psi(\x) \leftrightarrow R_\psi(\x)\bigr),
	\]
	where $\x$ contains precisely the free variables of $\psi$, i.e., $\cK$ omits $\Theta_A$.
	In fact, we have the following.

\begin{lemma}
\label{Lemma: Pithy Pi-2 Morleyization}
	For every $L$-structure $\M$ there is a unique expansion of $\M$ to an $(L_A \cup L)$-structure $\M_A$ such that 
	$\M_A \models \Th_A^+ \, \cup \, \ELA$.
In addition, for every $L_A$-structure $\N$ that satisfies $\Th_A^+$,
	there is a unique 
	expansion of $\N$ to an $(L_A \cup L)$-structure $\N^\mathrm{E} \models \ELA$. 
	Further, $(\M_A|_{L_A})^\mathrm{E} = \M_A$, and $(\N^\mathrm{E}|_{L})_A = \N^\mathrm{E}$.
\end{lemma}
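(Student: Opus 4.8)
The plan is to show that the two operations $\M\mapsto\M_A$ and $\N\mapsto\N^{\mathrm{E}}$ are each well-defined --- the required expansion exists and is unique --- and that they are mutually inverse. I would first prove the two existence-and-uniqueness statements directly, each time exploiting that the relevant theory forces the interpretations of the ``new'' symbols; the two identities $(\M_A|_{L_A})^{\mathrm{E}}=\M_A$ and $(\N^{\mathrm{E}}|_{L})_A=\N^{\mathrm{E}}$ then follow purely formally from the uniqueness halves, by exhibiting the two sides of each as expansions of a common reduct satisfying the same defining axioms.

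For the construction of $\M_A$: given an $L$-structure $\M$ with domain $M$, interpret the symbols of $L$ as in $\M$, and for each $(\varphi(\x),\iota,\kappa)\in I$ with $\x=x_1\cdots x_n$ and $\iota\colon[n]\to[r]$, declare $Q_{\varphi(\x),\iota,\kappa}$ to hold of a non-redundant tuple $\b=b_1\cdots b_r$ from $M$ exactly when $\M\models\varphi(b_{\iota(1)},\ldots,b_{\iota(n)})$, and to fail on every redundant tuple. The non-redundancy family of axioms of $\Th_A$ then holds by fiat. The heart of the verification is the \emph{defining property}: for every $\varphi\in A$ and every tuple $\a$ of the right length, $\M_A\models R_\varphi(\a)$ if and only if $\M\models\varphi(\a)$. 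This comes from unwinding the disjunction defining $R_\varphi$: the only disjuncts satisfiable at $\a$ are those whose $\mathrm{eq}_\iota$ records exactly the equality pattern of $\a$, and on each of those the atom $Q_{\varphi(\x),\iota,\kappa}(a_{\kappa(1)}\cdots a_{\kappa(r)})$ reduces, via the identities $a_{\kappa(\iota(j))}=a_j$, to $\M\models\varphi(\a)$. Specialising the defining property to atomic $\varphi$ yields $\M_A\models\ELA$, and specialising it to each remaining syntactic shape of $\varphi$ (negation, binary conjunction, genuine and vacuous existential quantification, countable conjunction) yields the remaining axioms of $\Th_A$; and $\M_A$ omits $\Theta_A$, since if it realised some $p_{\bigwedge_{i\in\w}\eta_i}$ at a tuple, then $\{R_{\eta_i}(\zzi):i\in\w\}$ would force $\M\models\bigwedge_{i\in\w}\eta_i$, hence $R_{\bigwedge_{i\in\w}\eta_i}$, contradicting the first formula of the type. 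So $\M_A\models\Th_A^+\cup\ELA$.

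For uniqueness of $\M_A$: let $\cK$ be any $(L_A\cup L)$-expansion of $\M$ with $\cK\models\Th_A^+\cup\ELA$. The easy induction recorded just before the lemma gives $\cK\models R_\psi(\a)$ if and only if $\M\models\psi(\a)$, for every $\psi\in A$ and every $\a$; evaluating $R_\varphi$ at the tuple $(b_{\iota(1)},\ldots,b_{\iota(n)})$ and invoking the $\Th_A$-provable identifications among the $Q_{\varphi(\x),\iota,\kappa}$ then pins down $Q_{\varphi(\x),\iota,\kappa}$ on every non-redundant tuple, while the non-redundancy axioms pin it down on the rest, so $\cK=\M_A$. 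For part (2): given an $L_A$-structure $\N\models\Th_A^+$, the axioms of $\ELA$ force, for each atomic $L$-formula $\zeta(\ww)$ with $\ww$ its distinct free variables, the interpretation in any admissible expansion to be the set defined by $R_\zeta$ in $\N$; this leaves at most one candidate, and taking it as the definition of the $L$-part produces $\N^{\mathrm{E}}$ with $\N^{\mathrm{E}}|_{L_A}=\N$, modelling $\ELA$ by construction and $\Th_A^+$ because $\Th_A^+$ is an $L_A$-theory (when $L$ has function or constant symbols one checks in addition that these prescriptions are coherent, using the axioms commuting $R$ with $(\exists y)$). Finally, the two identities: $\M_A$ is an $(L_A\cup L)$-expansion of the $L_A$-structure $\M_A|_{L_A}$, which models $\Th_A^+$, and $\M_A\models\ELA$, so by the uniqueness in part (2), $(\M_A|_{L_A})^{\mathrm{E}}=\M_A$; symmetrically, $\N^{\mathrm{E}}$ is an $(L_A\cup L)$-expansion of the $L$-structure $\N^{\mathrm{E}}|_{L}$ satisfying $\Th_A^+\cup\ELA$ (using $\N^{\mathrm{E}}|_{L_A}=\N\models\Th_A^+$ and $\N^{\mathrm{E}}\models\ELA$), so by the uniqueness in part (1), $(\N^{\mathrm{E}}|_{L})_A=\N^{\mathrm{E}}$.

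The step I expect to be the real obstacle is not any of the above scaffolding but the combinatorial bookkeeping around the index set $I$: keeping straight the roles of $\iota$, $\kappa$, and $\mathrm{eq}_\iota$, managing the (harmless) multiplicity of triples naming provably equivalent relation symbols, and --- above all --- verifying carefully that the single disjunctive formula $R_\varphi$ genuinely inverts the passage $\M\mapsto(Q_{\varphi(\x),\iota,\kappa}^{\M_A})$ and that the $\ELA$-prescriptions on the various atomic formulas are mutually coherent, so that both the defining-property computation and the uniqueness argument go through as claimed.
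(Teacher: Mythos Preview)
Your proposal is correct and follows the same approach as the paper, but is far more detailed: the paper's own proof consists of two sentences asserting that ``such a structure clearly exists and is the unique $(L_A\cup L)$-structure with these properties'' for each of $\M_A$ and $\N^{\mathrm{E}}$, leaving the identities $(\M_A|_{L_A})^{\mathrm{E}}=\M_A$ and $(\N^{\mathrm{E}}|_L)_A=\N^{\mathrm{E}}$ entirely to the reader. Your explicit construction, the defining-property computation, and the derivation of the identities from uniqueness are exactly what the paper's ``clearly'' is gesturing at; your flagged subtlety about coherence of the $\ELA$-prescriptions when $L$ carries function or constant symbols is a genuine point the paper does not address.
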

\begin{proof}
	Let $\M_A$ be the $(L_A\cup L)$-structure that satisfies $\Th^+_A  \, \cup \, \ELA$ 
	and whose reduct to $L$ is $\M$. Such a structure clearly exists and is the unique $(L_A\cup L)$-structure with these properties.
	 
	Likewise, let $\N^\mathrm{E}$ be the unique $(L_A\cup L)$-structure that satisfies $\Th^+_A  \, \cup \, \ELA$ 
	and whose reduct to $L_A$ is $\N$.
\end{proof}

\begin{definition}
	\label{Morley-def}
	Let $\M$ be an $L$-structure.
	The 
	\defn{pithy $\Pi_2$ Morleyization} of $\M$ for $A$ is the
	$L_A$-structure $\M_A|_{L_A}$.
\end{definition}
Observe that any pithy $\Pi_2$ Morleyization is in a relational language (by the construction of $L_A$) and is non-redundant (by \cref{nonredundant-ThA}).

Define the countable pithy $\Pi_2$ first-order $L_A$-theory
$T_A \defas \Th_A \,\cup\, \{R_\sigma \st \sigma \in T\}$, which 
we will use in \cref{sec:existence}.
Further define the 
$\Lwow(L_A)$-theory \linebreak
$T_A^+ \defas \Th_A^+ \,\cup\, \{R_\sigma \st \sigma \in T\}$,
and let $A^+$ be the smallest $L_A$-fragment containing $\Th_A^+$. Observe that $A^+$ is countable and $T_A^+$ is an $A^+$-theory; we will later see in 
	\cref{TAplus-complete-Aplus-theory}
that $T_A^+$ is a complete $A^+$-theory.

Note that an $L_A$-structure satisfies $T_A^+$ if and only if it satisfies $T_A$ and omits 
$\Theta_A$.
The theory $T_A^+$ has the following key property.
\begin{lemma}
	\label{theory-Morleyization-lift}
	Let $\M$ be an $L$-structure.
	For every $L$-sentence $\sigma \in A$, we have $\M \models \sigma$ if and only if $\M_A \models R_\sigma$. 
In particular, 
	$\M_A$ is a definable expansion of $\M$ for $A$, and
	$\M \models T$ if and only if $\M_A\models T_A^+$.
\end{lemma}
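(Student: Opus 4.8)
The plan is to first prove the biconditional $\M \models \sigma \iff \M_A \models R_\sigma$ for \emph{all} $L$-formulas $\sigma \in A$, not merely sentences, and then read off the two stated consequences. For the biconditional, observe that by \cref{Lemma: Pithy Pi-2 Morleyization} the structure $\M_A$ is a model of $\Th_A^+ \cup \ELA$ with $\M_A|_{L} = \M$, so the easy induction on $L$-formulas noted above applies with $\cK = \M_A$ and yields $\M_A \models (\forall \x)\bigl(\psi(\x) \leftrightarrow R_\psi(\x)\bigr)$ for every $\psi \in A$. Since $\psi$ is an $L$-formula and $\M_A|_{L} = \M$, for every tuple $\a$ from the common universe we get $\M \models \psi(\a) \iff \M_A \models \psi(\a) \iff \M_A \models R_\psi(\a)$; specializing $\psi$ to a sentence $\sigma\in A$ --- so that $R_\sigma$ is the $0$-ary $L_A$-formula described in the construction of $\Th_A$ --- gives the displayed biconditional.

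For the assertion that $\M_A$ is a definable expansion of $\M$ for $A$, I would exhibit an explicit $A$-definition over $\M$ of each relation symbol $Q_{\varphi(\x),\iota,\kappa} \in L_A$: writing $\y = y_1 \cdots y_r$ for a tuple of the arity of $Q_{\varphi(\x),\iota,\kappa}$,
\[
	\M_A \models Q_{\varphi(\x),\iota,\kappa}(\y)
	\quad\Longleftrightarrow\quad
	\y \text{ is non-redundant and } \M \models \varphi(y_{\iota(1)},\ldots,y_{\iota(n)}).
\]
The non-redundancy clause is forced by the first family of axioms of $\Th_A$. For the rest, given a non-redundant $\y$ put $\a = (y_{\iota(1)},\ldots,y_{\iota(n)})$; then $\a$ satisfies $\mathrm{eq}_\iota$ (as $\y$ is non-redundant) and $a_{\kappa(i)} = y_i$ (as $\iota\circ\kappa = \id$), so among the disjuncts of $R_\varphi(\a)$ only the one indexed by $\iota$ can hold, and it evaluates to $Q_{\varphi(\x),\iota,\kappa}(\y)$; combining this with the biconditional of the first paragraph applied to $\varphi$ at $\a$ gives the displayed equivalence. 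Since every atomic $L$-formula lies in $A$, it follows that every symbol of $L_A \cup L$ outside $L$ is $A$-definable over $\M$. The one point needing care --- and the step I expect to be the main obstacle --- is the claim that for a fixed non-redundant tuple exactly one disjunct of $R_\varphi$ is active; this rests on the equality-pattern formulas $\mathrm{eq}_{\iota'}$ being pairwise mutually exclusive and on two relation symbols inducing the same equality pattern being provably equivalent modulo $\Th_A$ (the observation recorded when $\Th_A$ was defined, with a mild extension to indices $\iota'$ differing by a permutation of the target), and one should double-check these against the construction.

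Finally, for $\M \models T \iff \M_A \models T_A^+$: recalling that $T_A^+ = \Th_A^+ \cup \{R_\sigma \st \sigma \in T\}$, \cref{Lemma: Pithy Pi-2 Morleyization} gives $\M_A \models \Th_A^+$ unconditionally, while the biconditional of the first paragraph gives $\M_A \models R_\sigma$ for all $\sigma \in T$ if and only if $\M \models \sigma$ for all $\sigma \in T$, i.e.\ if and only if $\M \models T$. Hence $\M \models T \iff \M_A \models T_A^+$, which is the remaining claim.
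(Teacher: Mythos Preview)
Your approach is correct and matches the paper's: both invoke \cref{Lemma: Pithy Pi-2 Morleyization} to obtain $\M_A \models \Th_A^+ \cup \ELA$ and then appeal to the induction on formulas recorded just before that lemma (the paper phrases it as starting from the atomic base case supplied by $\ELA$ and inducting via the axioms of $\Th_A^+$). The paper's proof is considerably terser --- it does not separately argue either ``In particular'' clause --- so your second and third paragraphs supply detail the paper leaves implicit; the worry you flag about multiple active disjuncts of $R_\varphi$ can be sidestepped by noting that your candidate interpretation of the $Q$-symbols defines an expansion satisfying $\Th_A^+ \cup \ELA$ and then invoking the uniqueness clause of \cref{Lemma: Pithy Pi-2 Morleyization} to conclude it coincides with $\M_A$.
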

\begin{proof}
	We have
	$\M_A\models \Th_A^+ \, \cup \, \ELA$ 
by \cref{Lemma: Pithy Pi-2 Morleyization}. 
	Hence for atomic formulas $\sigma$, we have $\M \models \sigma$ if and only if $\M_A \models R_\sigma$.
	This equivalence holds for all $\sigma$
	via an easy induction on formulas, 
	using the way that $\Th_A^+$ was constructed.
\end{proof}

	For every $L$-type $p$ over $A$ that is consistent with $T$,
	define the qf $L_A$-type $p_A(\x) \defas \{ R_\psi(\x) \st \psi\in p\}$.
The following statement that types lift to qf-types 
is immediate from \cref{theory-Morleyization-lift}. 

\begin{corollary}
	\label{lift-to-qftypes}
	Let $p$ be a $L$-type over $A$ that is consistent with $T$.
Then $p_A$  is
consistent with $\Th^+_A$  
and such that 
$\M \models p(\a)$ iff $\M_A \models p_A(\a)$ for every model $\M$ of $T$ and tuple $\a\in\M$ whose length is the number of free variables of $p_A$.
\end{corollary}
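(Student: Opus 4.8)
The plan is to deduce \cref{lift-to-qftypes} as a straightforward consequence of \cref{theory-Morleyization-lift}, essentially by quantifying its statement over the (countably many) formulas in the type $p$. First I would recall the definition: for an $L$-type $p$ over $A$ consistent with $T$, we set $p_A(\x) = \{R_\psi(\x) \st \psi \in p\}$, which is a collection of quantifier-free $L_A$-formulas since each $R_\psi$ is quantifier-free by construction (it is a finite disjunction of formulas of the form $\mathrm{eq}_\iota \wedge Q_{\psi, \iota, \kappa}(\cdots)$). So $p_A$ is indeed a qf $L_A$-type, in the variables $\x$ that are the free variables of $p$.

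Next I would establish the ``iff'' statement. Fix a model $\M \models T$ and a tuple $\a \in \M$ of the appropriate length. By \cref{Lemma: Pithy Pi-2 Morleyization}, $\M$ has its canonical expansion $\M_A \models \Th_A^+ \cup \ELA$, and by \cref{theory-Morleyization-lift} (applied with free variables, not just sentences — the lemma is stated for sentences, so strictly I would either invoke the parenthetical remark in its proof that "this equivalence holds for all $\sigma$ via an easy induction on formulas", or more cleanly invoke the displayed equivalence $\cK \models (\forall \x)(\psi(\x) \leftrightarrow R_\psi(\x))$ that precedes \cref{Lemma: Pithy Pi-2 Morleyization}, valid in any model of $\Th_A^+ \cup \ELA$) we get $\M \models \psi(\a)$ iff $\M_A \models R_\psi(\a)$ for each individual $\psi \in p$. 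Taking the conjunction over all $\psi \in p$: $\M \models p(\a)$ (i.e., $\M \models \psi(\a)$ for all $\psi \in p$) iff $\M_A \models R_\psi(\a)$ for all $\psi \in p$, which is exactly $\M_A \models p_A(\a)$.

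Finally, for consistency of $p_A$ with $\Th_A^+$: since $p$ is consistent with $T$, there is some model $\cM$ of $T$ realizing $p$, say $\cM \models p(\a)$. Then $\cM_A \models \Th_A^+$ (indeed $\models \Th_A^+ \cup \ELA$) by \cref{Lemma: Pithy Pi-2 Morleyization}, and $\cM_A \models p_A(\a)$ by the equivalence just proved, so $p_A \cup \Th_A^+$ is realized in $\cM_A$ and hence consistent. This completes the corollary.

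The main obstacle — really a bookkeeping point rather than a genuine difficulty — is making sure \cref{theory-Morleyization-lift} is applied at the right level of generality: as literally stated it concerns $L$-\emph{sentences} $\sigma \in A$, whereas here I need the version with free variables and a tuple $\a$ substituted. This is why I would lean on the displayed formula-level equivalence $\cK \models (\forall \x)(\psi(\x) \leftrightarrow R_\psi(\x))$ holding in all models of $\Th_A^+ \cup \ELA$, which the excerpt asserts (with the remark that it follows by an easy induction on $L$-formulas), rather than on the sentence-only statement of \cref{theory-Morleyization-lift} itself. Everything else is routine: the only content is that satisfaction of a type is the conjunction of satisfaction of its members, and conjunction commutes with the provided equivalence.
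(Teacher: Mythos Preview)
Your proposal is correct and matches the paper's approach: the paper states that the corollary ``is immediate from \cref{theory-Morleyization-lift}'' and gives no further argument, so your expansion is exactly the intended reasoning. Your care in invoking the formula-level equivalence $\cK \models (\forall \x)(\psi(\x) \leftrightarrow R_\psi(\x))$ rather than the sentence-only statement is a legitimate bookkeeping point, but not a departure from the paper's method.
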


We now establish a tight connection between models of $T$ and of $T_A^+$.

\begin{corollary}
	\label{borel-bijection}
	The map
	$\M \mapsto \M_A|_{L_A}$ restricts to a Borel bijection 
	between 
$\llrr{T}$ and $\llrr{T_A^+}$
	that commutes with the logic action and preserves convex combinations of probability measures.
\end{corollary}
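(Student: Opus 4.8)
The plan is to produce an explicit inverse and transport the required structure across it. Write $f\colon\Str_L\to\Str_{L_A}$ for the map $\M\mapsto\M_A|_{L_A}$, and define $g\colon\llrr{T_A^+}\to\Str_L$ by $g(\N)\defas\N^\mathrm{E}|_L$; this is legitimate since $T_A^+\supseteq\Th_A^+$, so every $\N\models T_A^+$ has the expansion $\N^\mathrm{E}$ supplied by \cref{Lemma: Pithy Pi-2 Morleyization}. First I would check that $f$ and $g$ restrict to mutually inverse bijections between $\llrr{T}$ and $\llrr{T_A^+}$: the round-trip identities $(\M_A|_{L_A})^\mathrm{E}=\M_A$ and $(\N^\mathrm{E}|_L)_A=\N^\mathrm{E}$ of \cref{Lemma: Pithy Pi-2 Morleyization} give $g(f(\M))=\M$ and $f(g(\N))=\N$; and since $T_A^+$ is an $L_A$-theory, $\M_A\models T_A^+$ iff $\M_A|_{L_A}\models T_A^+$, so the equivalence $\M\models T\iff\M_A\models T_A^+$ from \cref{theory-Morleyization-lift} shows that $f$ maps $\llrr{T}$ onto $\llrr{T_A^+}$ and, symmetrically, $g$ maps $\llrr{T_A^+}$ onto $\llrr{T}$. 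Both sets are Borel (as $\llrr{T}=\llrr{\bigwedge_{\varphi\in T}\varphi}$, and similarly for the countable theory $T_A^+$), hence standard Borel spaces.

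Next I would establish Borel measurability. The cleanest route is to show that $g$ is Borel and then conclude that $f=g^{-1}$ is Borel by the Lusin--Souslin theorem (an injective Borel map between standard Borel spaces is a Borel isomorphism onto its image). That $g$ is Borel is immediate: it suffices to show $g^{-1}\bigllrr{\zeta(n_1,\dots,n_k)}$ is Borel for each atomic $L$-formula $\zeta$, since such sets generate the $\sigma$-algebra of $\Str_L$; but for $\N\models T_A^+$ we have $g(\N)\models\zeta(n_1,\dots,n_k)$ iff $\N^\mathrm{E}\models R_\zeta(n_1,\dots,n_k)$ (because $\N^\mathrm{E}\models\ELA$) iff $\N\models R_\zeta(n_1,\dots,n_k)$, and $R_\zeta$ is a \emph{finite} quantifier-free $L_A$-formula (for fixed atomic $\zeta$ the disjunction defining $R_\zeta$ ranges over a finite index set), so $\{\N\in\Str_{L_A}:\N\models R_\zeta(n_1,\dots,n_k)\}$ is clopen, hence Borel.

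Finally, for compatibility with the logic action I would use that $(\cdot)_A$ and $(\cdot)^\mathrm{E}$ are pinned down by uniqueness against the $\sym$-invariant theory $\Th_A^+\cup\ELA$: for $\tau\in\sym$, the structure $\tau\cdot\M_A$ is an $(L_A\cup L)$-expansion of $\tau\cdot\M$ satisfying $\Th_A^+\cup\ELA$, so by the uniqueness clause of \cref{Lemma: Pithy Pi-2 Morleyization} it equals $(\tau\cdot\M)_A$; restricting to $L_A$ gives $f(\tau\cdot\M)=\tau\cdot f(\M)$. Preservation of convex combinations is then formal, since $f_*\bigl(\sum_i c_i\mu_i\bigr)=\sum_i c_i\,f_*\mu_i$ for any Borel map $f$. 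I expect the only point genuinely requiring care to be the measurability of the forward map $f$ itself --- it is not transparent how to recover an individual relation $Q_{\varphi,\iota,\kappa}$ of $\M_A$ from a single $\Lwow(L)$-formula over $\M$, since several such symbols occur together inside the defining disjunctions of the $R$-formulas --- which is exactly why the argument routes through the manifestly finitely-definable inverse $g$. Everything else is bookkeeping on top of \cref{Lemma: Pithy Pi-2 Morleyization} and \cref{theory-Morleyization-lift}.
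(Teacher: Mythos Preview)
Your argument is correct and follows the same skeleton as the paper's proof: both rest on \cref{Lemma: Pithy Pi-2 Morleyization} for the bijection and \cref{theory-Morleyization-lift} for the restriction to $\llrr{T}$ and $\llrr{T_A^+}$, and both observe that equivariance and linearity of pushforward are formal.

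The one substantive difference is your treatment of measurability. The paper simply asserts that the forward map is ``clearly Borel'', whereas you route through the inverse $g$ and invoke Lusin--Souslin. Your route is valid, but your stated worry about the forward direction is unfounded: for each relation symbol $Q_{\varphi,\iota,\kappa}\in L_A$, one can read off directly that $\M_A\models Q_{\varphi,\iota,\kappa}(m_1,\dots,m_r)$ holds iff the $m_i$ are pairwise distinct and $\M\models\varphi$ on the tuple obtained by substituting $m_{\iota(j)}$ for $x_j$ (this is exactly how $\M_A$ is defined via $R_\varphi$ and the non-redundancy axiom of $\Th_A$). Since $\varphi\in\Lwow(L)$, this preimage is Borel in $\Str_L$, so $f$ is Borel on the nose without descriptive-set-theoretic machinery. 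Your approach costs a theorem but buys robustness; the paper's is shorter but leaves the reader to unwind the $Q$/$R$ encoding.
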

\begin{proof}
By \cref{Lemma: Pithy Pi-2 Morleyization} and \cref{lift-to-qftypes},
the map $\M \mapsto \M_A|_{L_A}$ is a bijection between $\Str_L$ and the set of models of
$\Th_A^+$ within
$\Str_{L_A}$.
This bijection is clearly Borel; note that it commutes with the logic action and preserves convex combinations of probability measures. Further, by \cref{theory-Morleyization-lift},
the image of $\llrr{T}$ is $\llrr{T_A^+}$.
	Finally, note that every model of $T_A^+$ is the 
	pithy $\Pi_2$ Morleyization of some model of $T$.
\end{proof}

This yields the following corollary for ergodic structures, since the ergodic structures are precisely those invariant measures that cannot be decomposed as a non-trivial convex combination of others
	(again see \cite[Lemma~A1.2 and Theorem~A1.3]{MR2161313}).

\begin{corollary}
\label{bijection-preserves-invariant-measures}
$T$ has an ergodic model 
	if and only if
	$T_A^+$ does.
\end{corollary}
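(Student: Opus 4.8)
The plan is to transport ergodic models back and forth along the Borel bijection of \cref{borel-bijection} by pushing forward measures, and then to identify ergodicity with being an extreme point of the simplex of invariant probability measures.

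Write $f \colon \llrr{T} \to \llrr{T_A^+}$ for the restriction of the map $\M \mapsto \M_A|_{L_A}$. By \cref{borel-bijection}, $f$ is a Borel bijection (hence a Borel isomorphism onto its image) that commutes with the logic action. Given an ergodic model $\mu$ of $T$ — that is, an invariant probability measure on $\Str_L$ concentrated on $\llrr{T}$ — I would let $f_*\mu$ be its pushforward along $f$, regarded as a probability measure on $\Str_{L_A}$ (trivially extended by $0$ off $\llrr{T_A^+}$). Then $f_*\mu$ is concentrated on $\llrr{T_A^+}$, and it is invariant because $f$ intertwines the two logic actions. Applying the same construction to $f^{-1}$ shows that $\mu \mapsto f_*\mu$ is a bijection between the invariant probability measures on $\Str_L$ concentrated on $\llrr{T}$ and the invariant probability measures on $\Str_{L_A}$ concentrated on $\llrr{T_A^+}$, with inverse $\nu \mapsto (f^{-1})_*\nu$. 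Moreover this bijection is affine, since pushforward commutes with convex combinations; this is exactly the ``preserves convex combinations'' clause of \cref{borel-bijection}.

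It then remains to check that $\mu$ is ergodic precisely when $f_*\mu$ is. Recall from the ergodic decomposition (see, e.g., \cite[Lemma~A1.2 and Theorem~A1.3]{MR2161313}) that an invariant probability measure is ergodic if and only if it is an extreme point of the convex set of all invariant probability measures. Since $\llrr{T}$ and $\llrr{T_A^+}$ are Borel and invariant under the logic action, a measure concentrated on one of them is ergodic on the ambient space if and only if it is an extreme point of the convex set of invariant probability measures concentrated on that set — because in any convex decomposition of such a measure, each component must again be concentrated on the same invariant Borel set. An affine bijection between convex sets carries extreme points to extreme points, so $\mu$ is ergodic if and only if $f_*\mu$ is. Combining this with the bijection of the previous paragraph yields that $T$ has an ergodic model if and only if $T_A^+$ does.

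The only step requiring genuine care is the reduction in the last paragraph from ``extreme among all invariant probability measures'' to ``extreme among those concentrated on the given invariant Borel set'': one must verify that concentration on a fixed invariant Borel set is inherited by the components of the relevant convex decompositions. Everything else is formal bookkeeping once \cref{borel-bijection} is available.
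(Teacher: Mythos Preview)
Your proof is correct and follows essentially the same approach as the paper: push measures along the Borel bijection of \cref{borel-bijection}, and use that ergodic measures are exactly the extreme points (equivalently, those not decomposable as a nontrivial convex combination), so an affine bijection preserves them. The paper compresses all of this into a single sentence, whereas you spell out the pushforward construction and the ``extreme among all invariant measures'' versus ``extreme among those concentrated on the invariant set'' reduction more carefully than the paper does.
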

\begin{proof}
	By \cref{borel-bijection}, there is a bijection between the set of ergodic $L$-structures that almost surely satisfy $T$ and the ergodic $L_A$-structures that almost surely satisfy $T_A^+$.
	In particular, one such set of ergodic structures is non-empty if and only if the other is non-empty.
\end{proof}

This corollary reduces the problem of determining whether there is an ergodic 
model of
an $A$-theory to that of determining whether there is 
an ergodic model
of a corresponding first-order theory that omits a certain countable set of qf-types.

The following result will be useful when considering trivial $A$-definable closure in \cref{newstrongamalgamation}.
\begin{corollary}
	\label{lifting-of-provability}
		For any sentence $\sigma\in A$,  we have 
		\[T\models \sigma \qquad \text{if and only if} \qquad T_A^+ \models R_\sigma.
		\]
\end{corollary}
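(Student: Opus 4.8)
The plan is to read this off from \cref{theory-Morleyization-lift} together with the uniqueness assertions of \cref{Lemma: Pithy Pi-2 Morleyization}. The point is that pithy $\Pi_2$ Morleyization sets up a bijection, at every cardinality, between models of $T$ and models of $T_A^+$ under which satisfaction of $\sigma$ corresponds to satisfaction of $R_\sigma$; since $\sigma$ is a sentence, $R_\sigma$ is a $0$-ary formula and hence an $L_A$-sentence, so both sides of the claimed equivalence really are statements of semantic consequence.

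First I would set up the correspondence. For an arbitrary $L$-structure $\M$, \cref{theory-Morleyization-lift} gives $\M \models T$ iff $\M_A \models T_A^+$, and $\M \models \sigma$ iff $\M_A \models R_\sigma$; since $T_A^+$ and $R_\sigma$ live in the language $L_A$, these may equally be tested against $\M_A|_{L_A}$. Conversely, given an $L_A$-structure $\N \models T_A^+$, \cref{Lemma: Pithy Pi-2 Morleyization} supplies its unique expansion $\N^\mathrm{E} \models \ELA$; putting $\M \defas \N^\mathrm{E}|_L$ and using $(\N^\mathrm{E}|_L)_A = \N^\mathrm{E}$ from that same lemma, we get $\M_A = \N^\mathrm{E}$, hence $\M_A|_{L_A} = \N$, and then $\M \models T$ again by \cref{theory-Morleyization-lift}. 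Thus $\M \mapsto \M_A|_{L_A}$ is a bijection from the class of models of $T$ onto the class of models of $T_A^+$ that matches up $\M \models \sigma$ with $\M_A|_{L_A} \models R_\sigma$ throughout.

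Both directions then follow immediately. If $T \models \sigma$ and $\N \models T_A^+$, choose $\M \models T$ with $\M_A|_{L_A} = \N$; then $\M \models \sigma$, so $\N \models R_\sigma$. If $T_A^+ \models R_\sigma$ and $\M \models T$, then $\M_A|_{L_A} \models T_A^+$, hence $\M_A|_{L_A} \models R_\sigma$, hence $\M \models \sigma$. There is no real obstacle here: the only care required is the bookkeeping among the three languages $L$, $L_A$, and $L_A \cup L$ and the reduct/expansion operations relating them, together with invoking the uniqueness clauses of \cref{Lemma: Pithy Pi-2 Morleyization} so as to know that every model of $T_A^+$ comes from a model of $T$; the fact that $R_\sigma$ is a sentence whenever $\sigma$ is was already noted in the construction of $L_A$.
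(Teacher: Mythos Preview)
Your proof is correct and takes essentially the same approach as the paper: both derive the equivalence from the model-level correspondence between models of $T$ and models of $T_A^+$ furnished by \cref{Lemma: Pithy Pi-2 Morleyization} and \cref{theory-Morleyization-lift}. The paper packages this slightly differently---it invokes the completeness theorem for $\Lwow$ to reduce to emptiness of $\llrr{T\cup\{\neg\sigma\}}$ and $\llrr{T_A^+\cup\{\neg R_\sigma\}}$ inside $\Str_L$ and then appeals to the Borel bijection of \cref{borel-bijection}---whereas you argue directly with models of arbitrary cardinality; but the content is the same, and your version arguably avoids the detour through countable models.
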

\begin{proof}
	By the completeness theorem for sentences of $\Lwow(L)$, we have $T\models \sigma$  if and only if 
	$\llrr{T \cup \{\neg \sigma\}}$ is empty. By 
	\cref{bijection-preserves-invariant-measures}, $\llrr{T \cup \{\neg \sigma\}}$ is empty if and only if
		$\llrr{T_A^+ \cup \{R_{\neg \sigma}\}}$ is empty.

Again by completeness,	
		$T_A^+ \models R_\sigma$ if and only if
	$\llrr{T_A^+ \cup \{\neg R_\sigma\}}$ is empty. 
But	
	\[
		\llrr{T_A^+ \cup \{\neg R_\sigma\}}
		= \llrr{(T_A^+ \cup \{R_{\neg \sigma}\}) },
	\]
	and so we are done.
\end{proof}

The following corollary will be used in the proof of our main result, \cref{main-result}.

\begin{corollary}
	\label{TAplus-complete-Aplus-theory}
For any complete $A$-theory $T$, the theory $T_A^+$ is a complete $A^+$-theory.
\end{corollary}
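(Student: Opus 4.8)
The plan is to transfer the question from the fragment $A^+$ down to the fragment $A$, by means of the translation sentences $R_\sigma$. The crux is the following claim: \emph{for every $L_A$-formula $\chi(\xx)\in A^+$ there is an $L$-formula $\psi_\chi(\xx)\in A$, with the same free variables, such that $\Th_A^+ \models (\forall\xx)\bigl(\chi(\xx)\leftrightarrow R_{\psi_\chi}(\xx)\bigr)$.}

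Granting the claim, the argument is short. First, $T_A^+$ is consistent: if $\M\models T$ (such $\M$ exists since $T$ is consistent), then $\M_A|_{L_A}\models T_A^+$ by \cref{theory-Morleyization-lift} and \cref{Lemma: Pithy Pi-2 Morleyization}. Now take any sentence $\tau\in A^+$; the claim supplies an $L$-sentence $\sigma\in A$ with $\Th_A^+\models\tau\leftrightarrow R_\sigma$. Since $T$ is complete for $A$, either $T\models\sigma$ or $T\models\neg\sigma$, so by \cref{lifting-of-provability} either $T_A^+\models R_\sigma$ or $T_A^+\models R_{\neg\sigma}$. As $\Th_A^+\models R_{\neg\sigma}\leftrightarrow\neg R_\sigma$ (one of the defining axioms of $\Th_A$) and $\Th_A^+\subseteq T_A^+$, it follows that $T_A^+\models\tau$ or $T_A^+\models\neg\tau$, so $T_A^+$ is a complete $A^+$-theory.

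For the claim I would induct on how the $A^+$-formula $\chi$ is built (taking $\neg,\And,\exists$ as the primitive finitary connectives, with $\bigwedge$ treated below). The inductive steps are mirrored exactly by the defining axioms of $\Th_A$: for $\chi=\neg\chi_0$ put $\psi_\chi:=\neg\psi_{\chi_0}$ and use $R_{\neg\psi_{\chi_0}}\leftrightarrow\neg R_{\psi_{\chi_0}}$; for $\chi=\chi_0\And\chi_1$ put $\psi_\chi:=\psi_{\chi_0}\And\psi_{\chi_1}$ and use $R_{\psi_{\chi_0}\And\psi_{\chi_1}}\leftrightarrow R_{\psi_{\chi_0}}\And R_{\psi_{\chi_1}}$; for $\chi=(\exists y)\chi_0$ put $\psi_\chi:=(\exists y)\psi_{\chi_0}$ and use $R_{(\exists y)\psi_{\chi_0}}\leftrightarrow(\exists y)R_{\psi_{\chi_0}}$; in each case $\psi_\chi\in A$ since $A$ is a fragment, and the free-variable bookkeeping is routine. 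The only infinitary formulas occurring in $A^+$ are the formulas $\bigwedge_{\varphi\in p_{\bigwedge_i\eta_i}}\varphi$ (together with their subformulas and Boolean combinations) coming from the omitted types $\Theta_A$; on these one uses that $\Th_A^+$ proves $R_{\bigwedge_i\eta_i}\leftrightarrow\bigwedge_i R_{\eta_i}$ --- the forward direction being an axiom of $\Th_A$ and the reverse direction being exactly the omission of the type $p_{\bigwedge_i\eta_i}$ --- so that these cases reduce to $\psi_\chi$ built inside $A$ from $\bigwedge_i\eta_i\in A$ and its negation.

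The base case is the only genuinely delicate point, and I expect it to be the main obstacle. For an atomic formula $\chi=Q_{\zeta,\iota,\kappa}(y_1\cdots y_r)$, where $(\zeta(x_1\cdots x_n),\iota,\kappa)\in I$ (so $\zeta\in A$, $\iota\colon[n]\to[r]$ a surjection and $\kappa$ a section of it), I would take $\psi_\chi$ to be (a formula of $A$ equivalent to) the $\iota$-identified instance of $\zeta$ conjoined with a non-redundancy clause, $\zeta(y_{\iota(1)},\ldots,y_{\iota(n)})\And\bigwedge_{1\le j<\ell\le r}(y_j\neq y_\ell)$; this lies in $A$ because $A$ is closed under the relevant operations and --- as is needed already for $\ELA$ to be well-formed --- contains the equality formulas. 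To check $\Th_A^+\models(\forall\yy)\bigl(Q_{\zeta,\iota,\kappa}(\yy)\leftrightarrow R_{\psi_\chi}(\yy)\bigr)$, it suffices --- since by \cref{Lemma: Pithy Pi-2 Morleyization} every model of $\Th_A^+$ is the $L_A$-reduct of $\M_A$ for some $L$-structure $\M$ --- to verify the equivalence inside each such $\M_A$. There $R_{\psi_\chi}\leftrightarrow\psi_\chi$ by the induction preceding \cref{Lemma: Pithy Pi-2 Morleyization}, and unwinding the definition of $R_\zeta$ against $\ELA$ shows that $\M_A$ interprets $Q_{\zeta,\iota,\kappa}$ precisely as the set of non-redundant tuples $\aa$ with $\M\models\zeta(a_{\iota(1)},\ldots,a_{\iota(n)})$. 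Tracking the surjection $\iota$ (and the variable identifications it induces) through the disjunction defining $R_\zeta$, and handling redundant tuples separately, is the fiddly part; but it is pure bookkeeping against the Morleyization construction, requiring no new idea.
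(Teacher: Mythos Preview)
Your proposal is correct and takes essentially the same approach as the paper. The paper's own proof is a two-sentence sketch: it asserts that by ``an easy induction on formulas'' every $\varphi\in A^+$ is $\Th_A^+$-equivalent to some relation symbol $R\in L_A$, and then invokes \cref{lifting-of-provability}. Your argument is exactly this, except that you target $R_{\psi_\chi}$ for some $\psi_\chi\in A$ rather than a bare relation symbol; since for sentences the $0$-ary relation symbols are precisely the $R_\sigma=Q_{\sigma,\id,\id}$, the two formulations coincide. Your explicit treatment of the base case (the $Q_{\zeta,\iota,\kappa}$ atoms) and of the infinitary conjunctions arising from $\Theta_A$ simply unpacks what the paper leaves implicit.
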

\begin{proof}
By an easy induction on formulas, for every formula $\varphi\in A^+$, there is a relation
$R\in L_A$, of arity the number of free variables of $\varphi$,
such that $\Th_A^+ \models (\forall \x) \bigl(R(\x) \leftrightarrow \varphi(\x)\bigr)$.
Therefore the result follows by \cref{lifting-of-provability}.
\end{proof}

\subsection{Definable closure and duplication of formulas}
\label{newstrongamalgamation}

The main result of \cite{AFP} is that the only obstacle to the existence of an 
ergodic structure concentrated on the isomorphism class of a given structure $\M$
is the presence of a tuple whose definable closure in $\M$ includes additional elements. The main result of the present paper shows that the only obstruction to 
the existence of an ergodic structure concentrated on the collection of models of
a complete $A$-theory $T \subseteq \Lwow(L)$ is the presence of a formula that $T$ proves \emph{uniformly} witnesses the non-triviality of definable closure in the models of $T$. 

In this subsection we introduce the notion of \emph{trivial $A$-definable closure} over a fragment $A$ as well as a useful equivalent concept, that of having \emph{duplication of formulas} in $A$.

\begin{definition}
	Let $A$ be a fragment and let $\M$ be an $L$-structure.
Let $T^\M$ be the complete $A$-theory of $\M$.
	The \defn{$A$-definable closure} of a set $X \subseteq \M$ is defined to be
	the set $\dcl_A(X)$ of all elements $b\in \M$ for which there is a formula $\varphi(\x,y)\in A$ and a tuple $\a$ of length $|\x|$ all of whose elements are in $X$, such that
	\[
		\M\models \varphi(\a, b) \qquad \mathrm{and} \qquad
	T^\M \models (\forall \x)(\exists^{=1} y)\, \varphi(\x,y),\]
where $(\exists^{=1} y)$ means ``there exists a unique $y$''.
	The \defn{$A$-algebraic closure} of $X$, written $\acl_A(X)$ is defined similarly except with $(\exists^{=1} y)$ replaced by $(\exists^{< \omega} y)$, i.e., ``there exist finitely many $y$''.
\end{definition}

Recall that the Scott sentence $\sigma_\cM\in\Lwow(L)$ of a countable $L$-structure $\cM$ determines $\cM$ up to isomorphism among countable structures.
When $A$ contains the Scott sentence of $\M$, then $\dcl_A(X)$ is the same as the usual ``group-theoretic'' notion of definable closure in $\M$, i.e., the set $\dcl^{\textrm{G}}_\cM(X)$ of those $b\in\M$ for which 
\[\bigl\{g(b) \st g \in \Aut(\cM)\text{~s.t.~}(\forall c\in X)\ g(c) = c\bigr\} = \{b\}.\]
The structure $\cM$ is said to have trivial group-theoretic definable closure when $\dcl^{\textrm{G}}_\cM(X) = X$ for all finite $X\subseteq\cM$.
Indeed, this is the notion of definable closure used in \cite{AFP}, which only considers the case of Scott sentences.

\begin{definition}
	Let $A$ be a fragment and let $\M$ be an $L$-structure.
The structure $\M$ has	
	\defn{trivial $A$-definable closure} when 
	$\dcl_A(X) = X$ for all $X \subseteq\M$. 
	Note that $\M$ already has trivial $A$-definable closure
if 	
	$\dcl_A(X) = X$ for all \emph{finite} $X \subseteq\M$. 
Also note that the analogously defined notion of trivial $A$-algebraic closure is identical to trivial $A$-definable closure.

	A complete $A$-theory $T$ has \defn{trivial $A$-definable closure} when all models $\M\models T$ have trivial $A$-definable closure. 
\end{definition}

One can also perform a formula-by-formula analysis of trivial $A$-definable closure.

\begin{definition}
	Let $A$ be a countable fragment of $\Lwow(L)$.
A complete $A$-theory $T$ has \emph{trivial $\varphi(\x, y)$-definable closure} for a formula $\varphi(\x, y)$ of $\Lwow(L)$ if $T \models \neg (\exists \x)(\exists^{=1} y)\,  \varphi(\x, y)$. 
\end{definition}
The following lemma is immediate.

\begin{lemma}
	\label{trivial-dcl-formulas}
	Let $A$ be a countable fragment of $\Lwow(L)$.
	A complete $A$-theory $T$ has trivial $A$-definable closure if and only if $T$ has trivial $\psi$-definable closure for all non-redundant formulas $\psi\in A$.
\end{lemma}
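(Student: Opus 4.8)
The plan is to prove the two implications separately, each by contraposition, moving back and forth between two kinds of witness for non-triviality of definable closure: a \emph{global} witness, a formula $\varphi(\x,y)\in A$ such that $(\forall\x)(\exists^{=1}y)\,\varphi(\x,y)$ holds in every model of $T$ (so $\varphi$ defines a total function) whose value falls outside the parameter set at some point; and a \emph{local} witness, a non-redundant $\psi(\x,y)\in A$ with $T\models(\exists\x)(\exists^{=1}y)\,\psi(\x,y)$. Since $T$ is complete for $A$, for every sentence $\chi\in A$ exactly one of $T\models\chi$ and $T\models\neg\chi$ holds, and $\M\models\chi\iff T\models\chi$ for every $\M\models T$; in particular the complete $A$-theory $T^{\M}$ of any $\M\models T$ equals $\{\chi\in A\st T\models\chi\}$, independent of $\M$, and I use this freely below. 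I also use the standard conventions that a fragment contains all atomic $L$-formulas and is closed under renaming of free variables, which is what keeps the auxiliary formulas constructed below inside $A$.

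For the implication from trivial $A$-definable closure to trivial $\psi$-definable closure for every non-redundant $\psi\in A$, I argue contrapositively. Suppose a non-redundant $\psi(\x,y)\in A$ with $\x=x_1\cdots x_n$ has non-trivial $\psi$-definable closure, i.e.\ $T\not\models\neg(\exists\x)(\exists^{=1}y)\,\psi$; by completeness $T\models(\exists\x)(\exists^{=1}y)\,\psi$. The case $n=0$ is immediate, since then the unique $y$ with $\psi(y)$ lies in $\dcl_A(\emptyset)\setminus\emptyset$. For $n\ge 1$, fix any $\M\models T$ and a tuple $\a$ together with the unique $b$ with $\M\models\psi(\a,b)$; non-redundancy of $\psi$ forces the entries of $\a$ to be pairwise distinct and $b\neq a_i$ for all $i$. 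The key step is to convert $\psi$ into a total definable function, via
\[
\varphi(\x,y)\defas\bigl[(\exists^{=1}y')\,\psi(\x,y')\ \wedge\ \psi(\x,y)\bigr]\ \vee\ \bigl[\neg(\exists^{=1}y')\,\psi(\x,y')\ \wedge\ y=x_1\bigr],
\]
which lies in $A$ and for which $(\forall\x)(\exists^{=1}y)\,\varphi(\x,y)$ holds in every nonempty $L$-structure ($\varphi$ returns the unique value of $\psi(\x,\cdot)$ where there is one, and returns $x_1$ otherwise). Since $b$ is the unique witness, $\M\models\varphi(\a,b)$ through the first disjunct, so $b\in\dcl_A(\{a_1,\dots,a_n\})\setminus\{a_1,\dots,a_n\}$: $A$-definable closure is non-trivial.

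For the reverse implication, I again argue contrapositively: suppose $\M\models T$, $X\subseteq\M$ is finite, and $b\in\dcl_A(X)\setminus X$, witnessed by $\varphi(\x,y)\in A$ and a tuple $\a$ from $X$ with $\M\models\varphi(\a,b)$ and $T\models(\forall\x)(\exists^{=1}y)\,\varphi$; note $b\neq a_i$ for all $i$ since $b\notin X$. I then collapse repeated parameters: pick one representative variable $x_{i_1},\dots,x_{i_r}$ from each block of coordinates on which $\a$ is constant, let $\hat\varphi(x_{i_1},\dots,x_{i_r},y)$ result from $\varphi$ by substituting $x_{i_k}$ for every variable in its block, and set
\[
\psi(x_{i_1},\dots,x_{i_r},y)\defas\hat\varphi\ \wedge\ \bigwedge_{k<\ell}(x_{i_k}\neq x_{i_\ell})\ \wedge\ \bigwedge_{k}(x_{i_k}\neq y).
\]
Then $\psi\in A$ is non-redundant; substituting variables into $T\models(\forall\x)(\exists^{=1}y)\,\varphi$ yields $T\models(\forall\x')(\exists^{\le 1}y)\,\psi(\x',y)$, while $\M\models\psi$ at the reduced subtuple $\a'$ of $\a$ together with $b$. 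Hence $\M\models(\exists\x')(\exists^{=1}y)\,\psi$, so $T\models(\exists\x')(\exists^{=1}y)\,\psi$ by completeness, i.e.\ $T\not\models\neg(\exists\x')(\exists^{=1}y)\,\psi$: $\psi$-definable closure is non-trivial.

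I expect the main obstacle to be the gap between the \emph{local} uniqueness supplied by $(\exists\x)(\exists^{=1}y)\,\psi$ and the \emph{global} uniqueness $(\forall\x)(\exists^{=1}y)\,\varphi$ required by the definition of $\dcl_A$; the case-split in the displayed definition of $\varphi$ is exactly what bridges this, at the harmless cost of handling $\x=\emptyset$ separately. The other point needing care is keeping every auxiliary formula inside $A$ --- hence the reliance on $A$ containing equality atoms and being closed under renaming --- together with the elementary observation that substituting variables into a $\forall$-sentence preserves it, so that collapsing repeated parameters in the second direction leaves the uniqueness clause intact.
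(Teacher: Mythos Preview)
Your proof is correct. The paper declares the lemma ``immediate'' and supplies no argument, so there is no proof to compare against directly.

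That said, your write-up does more than merely unwind definitions, and usefully so. The paper's definition of $\dcl_A$ requires a witnessing formula $\varphi$ with the \emph{global} condition $T^{\M}\models(\forall\x)(\exists^{=1}y)\,\varphi$, whereas failure of trivial $\psi$-definable closure only supplies the \emph{local} condition $T\models(\exists\x)(\exists^{=1}y)\,\psi$. Your totalization trick (defaulting to $y=x_1$ where $\psi$ lacks a unique witness) is exactly what bridges this gap, and is the honest content behind the forward direction; the paper presumably regards this as folklore. For the reverse direction, your collapsing of repeated parameters and adjoining of inequalities is essentially the same idea the paper later uses in \cref{nonredundant-nontrivial-dcl}, where it is phrased instead as decomposing an arbitrary formula into a finite disjunction of non-redundant ones and using completeness of $T$ to pass to a single disjunct; the two arguments are interchangeable. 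Your explicit reliance on $A$ being closed under variable renaming is a fair caveat, since the paper's definition of fragment does not state this outright, though it is standard.
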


It was shown in \cite{AFP} that $\cM$ has trivial group-theoretic definable closure if and only if there is an ergodic structure concentrated on its orbit.
When we move to $A$-theories it is not the case that having an 
ergodic model of
the theory ensures that all (classical) models of the theory have trivial group-theoretic definable closure. In fact, \cite{AFNP} showed that there are many first-order theories on which some invariant measure is concentrated, but for which almost every (classical) structure sampled according to the measure 
has non-trivial group-theoretic definable closure. 
For further examples of this phenomenon, see \cite{properly-ergodic}.

	\begin{lemma}
		\label{bijection-preserves-trivial-dcl}
	Let $A$ be a countable fragment of $\Lwow(L)$, and let $T$ be a complete $A$-theory.
		Then $T$ has trivial $A$-definable closure if and only if $T^+_A$ has trivial $A^+$-definable closure.
	\end{lemma}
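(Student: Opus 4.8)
The plan is to transfer the characterization of trivial definable closure across the Borel bijection of \cref{borel-bijection}, using \cref{lifting-of-provability} as the main bridge. By \cref{trivial-dcl-formulas}, $T$ has trivial $A$-definable closure if and only if $T$ has trivial $\psi$-definable closure for every non-redundant $\psi \in A$, i.e.\ $T \models \neg(\exists\x)(\exists^{=1}y)\,\psi(\x,y)$; similarly for $T_A^+$ over $A^+$. So the task reduces to showing that the existence of a ``bad'' formula on one side is matched by one on the other. First I would handle the forward direction: given trivial $A$-definable closure of $T$, suppose toward a contradiction that some non-redundant $\chi(\x,y)\in A^+$ witnesses non-trivial $A^+$-definable closure of $T_A^+$. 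Using the key fact established in the proof of \cref{TAplus-complete-Aplus-theory} --- that for every $\varphi\in A^+$ there is a relation symbol $R\in L_A$ with $\Th_A^+ \models (\forall\x)(R(\x)\leftrightarrow\varphi(\x))$ --- I can replace $\chi$ by an $L_A$-formula, and then pull it back along $R_{(-)}$ to an $L$-formula $\psi\in A$; here I should be careful that the relevant $L_A$-relation corresponds, via the $R$-translation, to an honest $L$-formula $\psi(\x,y)\in A$ with $T_A^+ \models (\forall\x)(\forall y)(\psi_A \leftrightarrow R_\psi)$ where $R_\psi$ abbreviates the appropriate quantifier-free combination. Then \cref{lifting-of-provability} converts $T_A^+ \models (\exists\x)(\exists^{=1}y)\,R_\psi(\x,y)$ into $T\models(\exists\x)(\exists^{=1}y)\,\psi(\x,y)$, contradicting triviality for $T$ (after noting one may arrange $\psi$ non-redundant by conjoining the inequalities, which does not affect the $\exists^{=1}$ statement since $\chi$ was already non-redundant).

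For the reverse direction I would argue contrapositively and more directly: suppose $T$ does not have trivial $A$-definable closure, so by \cref{trivial-dcl-formulas} there is a non-redundant $\psi(\x,y)\in A$ with $T\models(\exists\x)(\exists^{=1}y)\,\psi(\x,y)$. Note $(\exists\x)(\exists^{=1}y)\,\psi(\x,y)$ is itself a sentence of $A$ (since $A$ is a fragment, closed under the quantifiers; and $\exists^{=1}$ is syntactic sugar for a formula built from $\exists$, $\wedge$, $\neg$, $=$). Applying \cref{lifting-of-provability} to this sentence $\sigma$ gives $T_A^+ \models R_\sigma$, and unwinding the definition of $R_\sigma$ --- more precisely, using that $\M\models\sigma$ iff $\M_A\models R_\sigma$ from \cref{theory-Morleyization-lift} together with \cref{borel-bijection} --- shows $T_A^+ \models (\exists\x)(\exists^{=1}y)\,R_\psi(\x,y)$, where $R_\psi$ is the $L_A$-formula associated to $\psi$ by the construction in \cref{pithy}. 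Since $R_\psi(\x,y)\in A^+$ (it is quantifier-free over $L_A$, built from subformulas of sentences in $\Th_A^+$ and the $R$-translations of subformulas of $\psi$; one checks these lie in the fragment $A^+$) and can be taken non-redundant (again by conjoining inequalities, harmless under $\exists^{=1}$), \cref{trivial-dcl-formulas} applied to $T_A^+$ over $A^+$ shows $T_A^+$ does not have trivial $A^+$-definable closure.

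The main obstacle, and the point requiring the most care, is bookkeeping around the fragments and the $R$-translation: I must verify that the formula exhibited on each side genuinely lies in the relevant fragment ($A$ or $A^+$), that the $\exists^{=1}$ quantifier is correctly commuted through the $R_{(-)}$ translation (which is why \cref{theory-Morleyization-lift} giving $\M\models\sigma \iff \M_A\models R_\sigma$ for \emph{all} $\sigma\in A$, not just atomic ones, is essential), and that passing between a formula and its non-redundant strengthening does not change the truth of the relevant ``unique witness'' sentence. Once these syntactic matches are pinned down, both directions are immediate from \cref{lifting-of-provability} and \cref{trivial-dcl-formulas}, with no further model-theoretic input needed.
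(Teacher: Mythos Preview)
Your proposal is correct and follows essentially the same approach as the paper: both arguments reduce via \cref{trivial-dcl-formulas} to a formula-by-formula check, invoke the fact (from the proof of \cref{TAplus-complete-Aplus-theory}) that every $A^+$-formula is $T_A^+$-equivalent to a relation symbol of $L_A$, and then use \cref{lifting-of-provability} to transfer the sentence $(\exists\x)(\exists^{=1}y)\,\psi(\x,y)$ between $T$ and $T_A^+$. The paper's proof is simply more terse, packaging both directions into the single observation that $T$ has trivial $\varphi$-definable closure iff $T_A^+$ has trivial $R_\varphi$-definable closure, whereas you unwind each direction contrapositively and are more explicit about the bookkeeping (non-redundancy, fragment membership, commuting $\exists^{=1}$ through $R_{(-)}$).
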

	\begin{proof}
		For every sentence $\sigma \in A^+$, there is a relation symbol $R\in L_A$ such that 
		\[T^+_A \models \sigma \leftrightarrow R.\]
However, by \cref{lifting-of-provability},
		for every formula $\varphi\in A$,  the theory
$T$ has trivial $\varphi$-definable closure if and only if 
		$T^+_A$ has trivial $R_\varphi$-definable closure.

		In particular, $T$ has trivial $\varphi$-definable closure for all formulas $\varphi\in A$ if and only if $T^+_A$ has trivial $\psi$-definable closure for all formulas $\psi\in A^+$.
		Therefore, by \cref{trivial-dcl-formulas}, $T$ has trivial $A$-definable closure
		if and only if $T^+_A$ has trivial $A^+$-definable closure.
	\end{proof}

	Rather than working with the notion of trivial $A$-definable closure directly,
	it will sometimes be convenient to use
the equivalent property of 
having \emph{duplication of formulas} in $A$, which is closely related to a notion that first appeared in \cite[\S3.3]{AFP}.
Recall the notion of a non-redundant formula (\cref{nonredundant-def}).

\begin{definition}
\label{dupdef}
	Let $T$ be a theory and $\varphi(\x, y) \in \Lwow(L)$ be a non-redundant formula consistent with $T$.
The theory $T$ \defn{duplicates} $\varphi$ when 
	\[
		T \models (\exists \x,y,z) \, \bigl(
		\varphi(\x, y) \,\And\, \varphi(\x, z) \, \And\, (y \neq z)\bigr).
	\]
	The theory
	$T$ has \defn{duplication of quantifier-free formulas} when it duplicates every non-redundant quantifier-free first-order formula $\varphi(\x, y)\in \Lww(L)$ consistent with $T$.

Let $A$ be a fragment and suppose that $T$ is an $A$-theory.
The theory
	$T$ has \defn{duplication of formulas in $A$} when $T$ duplicates every non-redundant formula 
 in $A$
	consistent with $T$.
\end{definition}

We then have the following. 
\begin{lemma}
\label{multiple-duplication}
Let $A$ be a fragment, and
suppose that $T$ has duplication of formulas in $A$.
For any non-redundant formula $\varphi(x_1, \dots, x_k) \in A$ such that
$(\exists x_1, \dots, x_k)\,\varphi(x_1, \dots, x_k)$ is consistent with $T$ and any
	$\ell\le k$, there is a non-redundant formula
$\psi(x_1^0,x_1^1, \dots, x_\ell^0, x_\ell^1)$ such that
\begin{itemize}
	\item $(\exists x_1^0,x_1^1, \dots, x_\ell^0, x_\ell^1, x_{\ell+1}, \ldots, x_k)\,\psi(x_1^0,x_1^1, \dots, x_\ell^0, x_\ell^1, x_{\ell+1}, \ldots, x_k)$ is consistent with $T$. 

\item For all maps $\alpha \colon \{1, \dots, \ell\} \rightarrow \{0,1\}$,
	\begin{eqnarray*}
		T\models (\forall x_1^0,x_1^1, \dots, x_\ell^0, x_\ell^1, x_{\ell+1}, \ldots, x_k)
		\hspace*{150pt} \\
		\hspace*{20pt}
		\bigl( \psi(x_1^0,x_1^1, \dots, x_\ell^0, x_\ell^1, x_{\ell+1}, \ldots, x_k)\rightarrow \varphi(x_1^{\alpha(1)}, \dots,x_\ell^{\alpha(\ell)}, x_{\ell+1}, \ldots, x_k) \bigr).
		\hspace*{-45pt}
	\end{eqnarray*}
\end{itemize}

\end{lemma}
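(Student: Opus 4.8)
The plan is to prove this by induction on $\ell$ (with $\varphi$ held fixed), splitting one further variable into two copies at each step and invoking the hypothesis ``$T$ has duplication of formulas in $A$'' exactly once per step. Since the construction feeds each intermediate formula back into that hypothesis, which applies only to formulas of $A$, I strengthen the inductive statement to also require that $\psi$ lie in $A$.

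For the base case $\ell = 0$, take $\psi \defas \varphi$: it is non-redundant and in $A$ by hypothesis, $(\exists x_1, \dots, x_k)\,\varphi$ is consistent with $T$ by hypothesis, and the only map $\alpha\colon\{1,\dots,0\}\to\{0,1\}$ is the empty one, for which $T \models (\forall x_1, \dots, x_k)(\psi \to \varphi)$ holds trivially.

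For the inductive step, given a non-redundant $\psi_\ell(x_1^0, x_1^1, \dots, x_\ell^0, x_\ell^1, x_{\ell+1}, \dots, x_k) \in A$ that is consistent with $T$ and satisfies the implication clause for $\ell$, set
\[
\psi_{\ell+1} \ \defas\ \psi_\ell\bigl[x_{\ell+1}^0 / x_{\ell+1}\bigr] \ \And\ \psi_\ell\bigl[x_{\ell+1}^1 / x_{\ell+1}\bigr] \ \And\ \bigl(x_{\ell+1}^0 \neq x_{\ell+1}^1\bigr),
\]
the conjunction of two renamed copies of $\psi_\ell$ that agree on every free variable except that one carries $x_{\ell+1}^0$ and the other $x_{\ell+1}^1$ in place of $x_{\ell+1}$, together with the inequality. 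Using closure of $A$ under conjunction and under renaming of free variables, $\psi_{\ell+1} \in A$; its free variables are exactly $x_1^0, x_1^1, \dots, x_{\ell+1}^0, x_{\ell+1}^1, x_{\ell+2}, \dots, x_k$. Each copy of $\psi_\ell$ is non-redundant and hence forces all pairs among its own free variables distinct, so the only pair of free variables of $\psi_{\ell+1}$ not yet separated is $\{x_{\ell+1}^0, x_{\ell+1}^1\}$, which the last conjunct separates; thus $\psi_{\ell+1}$ is non-redundant. For the implications, fix $\alpha\colon\{1,\dots,\ell+1\}\to\{0,1\}$; then $\psi_{\ell+1}$ implies the conjunct $\psi_\ell\bigl[x_{\ell+1}^{\alpha(\ell+1)}/x_{\ell+1}\bigr]$, and applying the inductive hypothesis for $\alpha$ restricted to $\{1,\dots,\ell\}$ and then substituting the fresh variable $x_{\ell+1}^{\alpha(\ell+1)}$ for $x_{\ell+1}$ (which preserves $T$-entailment, as $T$ has no free variables) gives $T \models \psi_\ell\bigl[x_{\ell+1}^{\alpha(\ell+1)}/x_{\ell+1}\bigr] \to \varphi\bigl(x_1^{\alpha(1)}, \dots, x_{\ell+1}^{\alpha(\ell+1)}, x_{\ell+2}, \dots, x_k\bigr)$, hence $T \models \psi_{\ell+1} \to \varphi\bigl(x_1^{\alpha(1)}, \dots, x_{\ell+1}^{\alpha(\ell+1)}, x_{\ell+2}, \dots, x_k\bigr)$.

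The one step where the hypothesis does real work, and the place I would be most careful, is the consistency of $(\exists\,\cdots)\,\psi_{\ell+1}$ with $T$. For this, view $\psi_\ell$ as a formula $\chi(\uu, v)$ with $v \defas x_{\ell+1}$ singled out among its free variables and $\uu$ the remaining ones. Since $\psi_\ell$ is non-redundant, lies in $A$, and is consistent with $T$, the hypothesis gives $T \models (\exists\,\uu, v, w)\,\bigl(\chi(\uu, v) \And \chi(\uu, w) \And v \neq w\bigr)$; renaming $v, w$ to $x_{\ell+1}^0, x_{\ell+1}^1$ turns the matrix into exactly $\psi_{\ell+1}$, so $T \models (\exists\,\cdots)\,\psi_{\ell+1}$ — more than the required consistency. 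This closes the induction. Apart from this single invocation of the hypothesis, the remaining work is bookkeeping: tracking the free-variable tuples through the renamings and confirming that the intermediate formulas stay in $A$; I do not expect any genuine obstacle there.
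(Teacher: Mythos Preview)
Your proof is correct and is precisely the ``easy induction'' the paper gestures at; the paper's own proof is a two-sentence sketch (base case $\ell=1$ from the definition, then induction), and you have simply written out the details of that same argument, starting from $\ell=0$ instead. One small caveat: you invoke closure of $A$ under renaming of free variables, which the paper's stated definition of fragment does not list explicitly; this is a standard convention (and is harmless in the paper's actual applications, which only use duplication of quantifier-free first-order formulas), but it is worth being aware that you are relying on it.
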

\begin{proof}
	The case where $\ell=1$ is immediate from the definition of duplication of formulas.
	For $\ell\ge 1$, the result follows from an easy induction.
\end{proof}

In our main construction, where we assume trivial definable closure, we will actually use the (potentially weaker) notion of duplication of formulas.

\begin{lemma}
\label{trivial dcl implies duplications}
Let $A$ be a fragment and let
	$T$ be a complete $A$-theory. 
Suppose that $T$ has trivial $A$-definable closure. Then
$T$ has duplication of formulas in $A$.
\end{lemma}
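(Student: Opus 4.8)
The plan is to prove the contrapositive: assume $T$ fails to duplicate some non-redundant formula $\varphi(\x,y)\in A$ consistent with $T$, and deduce that $T$ has non-trivial $A$-definable closure, contradicting the hypothesis. So suppose $\varphi(\x,y)\in A$ is non-redundant, $(\exists\x,y)\,\varphi(\x,y)$ is consistent with $T$, but $T$ does \emph{not} duplicate $\varphi$, i.e.
\[
T\models \neg(\exists\x,y,z)\,\bigl(\varphi(\x,y)\And\varphi(\x,z)\And(y\neq z)\bigr).
\]
This says exactly that in every model of $T$, for every tuple $\a$ (of the right length), there is \emph{at most one} $b$ with $\varphi(\a,b)$.

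Next I would try to upgrade "at most one" to "exactly one" on some consistent $A$-definable piece. The issue is that $\varphi$ need not be satisfiable by every tuple $\x$; we only know it is satisfiable by \emph{some} tuple. Since $T$ is complete for $A$ and $A$ is a fragment closed under $(\exists y)$ and $\neg$, the formula $\theta(\x)\defas(\exists y)\,\varphi(\x,y)$ lies in $A$, and $T\models\neg(\exists\x)\,\theta(\x)$ would contradict the consistency of $(\exists\x,y)\,\varphi(\x,y)$ with $T$ (here using that $T$ is complete for $A$, so $T$ decides $(\exists\x)\theta(\x)$, and the completeness theorem for $\Lwow(L)$ to pass between "consistent with $T$" and "not refuted by $T$"). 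Hence $T\models(\exists\x)\,\theta(\x)$. Now work inside an arbitrary model $\M\models T$: pick a tuple $\a$ with $\M\models\theta(\a)$, i.e. there is $b\in\M$ with $\M\models\varphi(\a,b)$; by the non-duplication assumption this $b$ is unique in $\M$. Consider the formula $\varphi'(\x,y)\defas\varphi(\x,y)\And\theta(\x)$, which is in $A$, is non-redundant (it implies $\varphi$), and satisfies $T\models(\forall\x)(\theta(\x)\to(\exists^{=1}y)\,\varphi'(\x,y))$ together with $T\models(\forall\x)(\neg\theta(\x)\to\neg(\exists y)\,\varphi'(\x,y))$. That is not quite a global $(\forall\x)(\exists^{=1}y)$ statement, so it does not immediately match the definition of $\dcl_A$; this mismatch is the main obstacle.

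To get around it, I would relativize $y$ as well: set
\[
\varphi''(\x,y)\defas \bigl(\theta(\x)\And\varphi(\x,y)\bigr)\ \Or\ \bigl(\neg\theta(\x)\And y = x_1\bigr),
\]
where $x_1$ is the first variable of $\x$ (using that $\x$ is nonempty --- if $\x$ is empty, $\varphi(y)$ already has a unique witness in every model and the argument is only easier, taking $\varphi''=\varphi$ and $X=\emptyset$). Since $A$ is a fragment and contains $\varphi$ and $\theta$, and equality is always available, $\varphi''\in A$. By construction $T\models(\forall\x)(\exists^{=1}y)\,\varphi''(\x,y)$: on $\theta$ the unique witness is the unique $\varphi$-witness, off $\theta$ it is $x_1$. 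Thus in any $\M\models T$, for the tuple $\a$ chosen above (with $\M\models\theta(\a)$), the unique $y$ with $\M\models\varphi''(\a,y)$ is the element $b$ with $\M\models\varphi(\a,b)$. Since $\varphi$ is non-redundant, $b\neq a_i$ for each $i$, so $b\notin X$ where $X$ is the (finite) set of entries of $\a$. Taking $T^\M$ to be the complete $A$-theory of $\M$ (which is just $T$, as $T$ is complete for $A$), the witnessing pair $\varphi''(\x,y)$, $\a$ shows $b\in\dcl_A(X)\setminus X$, so $\M$ does not have trivial $A$-definable closure. Hence $T$ does not have trivial $A$-definable closure, completing the contrapositive.

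A couple of routine points I would verify along the way but not belabor: that $\varphi''$ is genuinely in $A$ (the fragment is closed under $\And,\Or,\neg$ and subformulas, and $x_1=y$ is atomic, so this is automatic); that "consistent with $T$" and "not refuted by $T$" coincide via the completeness theorem for $\Lwow(L)$ cited earlier, which is what licenses the step $T\models(\exists\x)\theta(\x)$; and that when $\x$ is empty we can bypass the relativization entirely. The one genuinely substantive move is the relativization trick turning a "unique witness only over a definable set" statement into the globally unique-witness form demanded by the definition of $\dcl_A$ --- everything else is bookkeeping.
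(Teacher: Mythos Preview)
Your proof is correct. The paper's argument is a short direct one: since $T$ has trivial $A$-definable closure, \cref{trivial-dcl-formulas} gives $T\models\neg(\exists\x)(\exists^{=1}y)\,\varphi(\x,y)$ for every non-redundant $\varphi\in A$; combined with $T\models(\exists\x,y)\,\varphi(\x,y)$ (from consistency and completeness), duplication follows immediately, since for the witnessing $\a$ there must be at least two $b$'s with $\varphi(\a,b)$.

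Your contrapositive route is logically equivalent but more self-contained: rather than citing \cref{trivial-dcl-formulas}, you unfold the passage from ``$T\models(\exists\x)(\exists^{=1}y)\,\varphi(\x,y)$'' to ``non-trivial $A$-definable closure'' directly, and you correctly identify that the paper's definition of $\dcl_A$ demands a formula with $T\models(\forall\x)(\exists^{=1}y)\,\psi(\x,y)$, not merely $(\exists\x)(\exists^{=1}y)$. Your relativization $\varphi''$ (default to $x_1$ off the domain $\theta$) is exactly the device needed to bridge that gap, and your handling of the edge case $|\x|=0$ is fine. In effect you have supplied the argument underlying the direction of \cref{trivial-dcl-formulas} that the paper invokes; the paper's version is shorter because it packages this step into that lemma, while yours makes the mechanism explicit.
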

\begin{proof}
	Let $\varphi(\x, y)$ be an $A$-formula consistent with $T$.
	Then $T \models (\exists \x, y)\,\varphi(\x, y)$.
	But $T$ has trivial $A$-definable closure, and so by
	\cref{trivial-dcl-formulas}  we have
	$T \models \neg  (\exists \x) (\exists ^{=1} y)\,\varphi(\x, y)$.
Therefore	
	$T \models (\exists \x, y, z)\,\bigl(\varphi(\x, y) \And \varphi(\x, z) \And (y \neq z)\bigr)$.
\end{proof}
Note that the other direction of 
\cref{trivial dcl implies duplications}
does not necessarily hold.


\begin{lemma}
	\label{nonredundant-nontrivial-dcl}
	Let $T$ be a complete $A$-theory, and suppose that  $T$ has non-trivial $A$-definable closure.
	Then $T$ has non-trivial $\varphi(\x,y)$-definable closure for some non-redundant $\varphi(\x, y) \in A$, i.e.,
\[
	T \models (\exists \xx) (\exists^{=1}y)\, \varphi(\xx, y).
\]
\end{lemma}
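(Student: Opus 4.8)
The plan is to reduce the statement to \cref{trivial-dcl-formulas} by a short syntactic argument about non-redundant formulas. Recall that \cref{trivial-dcl-formulas} says $T$ has trivial $A$-definable closure if and only if $T$ has trivial $\psi$-definable closure for every non-redundant $\psi \in A$. So suppose $T$ has non-trivial $A$-definable closure. By the contrapositive of \cref{trivial-dcl-formulas}, there is some non-redundant $\psi(\x,y) \in A$ (here I am already singling out one free variable to play the role of $y$; if the witnessing formula has no free variables at all then definable closure is trivially trivial, and otherwise we may designate one variable as $y$) for which $T$ does \emph{not} have trivial $\psi$-definable closure, i.e.\ $T \not\models \neg(\exists\x)(\exists^{=1}y)\,\psi(\x,y)$. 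Since $T$ is a \emph{complete} $A$-theory and $(\exists\x)(\exists^{=1}y)\,\psi(\x,y)$ is an $A$-sentence (a fragment is closed under $\exists$, and ``$\exists^{=1}$'' abbreviates a combination of $\exists$, $\wedge$, $\neg$), completeness forces $T \models (\exists\x)(\exists^{=1}y)\,\psi(\x,y)$, which is exactly the desired conclusion with $\varphi \defas \psi$.

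The one subtlety to address carefully is the quantifier ``$T$ has non-trivial $\psi$-definable closure'' versus the clean statement $T \models (\exists\x)(\exists^{=1}y)\,\psi(\x,y)$: the former was \emph{defined} as the negation of $T \models \neg(\exists\x)(\exists^{=1}y)\,\psi(\x,y)$, so I need completeness of $T$ to upgrade ``does not prove the negation'' to ``proves it.'' This is the only place the hypothesis that $T$ is complete for $A$ is used, and it is essential — without completeness one could only conclude consistency of the witnessing formula, not provability. I would state this step explicitly so the reader sees where completeness enters.

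I expect no real obstacle here; the statement is essentially a repackaging of \cref{trivial-dcl-formulas} together with completeness, and the only thing to be slightly careful about is the bookkeeping of which variable is designated $y$ and the observation that the relevant formula lies in the fragment $A$ (so that completeness applies to it). The proof will be a few lines: invoke \cref{trivial-dcl-formulas} contrapositively to extract a non-redundant $\psi \in A$ with $T$ not having trivial $\psi$-definable closure, note that $(\exists\x)(\exists^{=1}y)\,\psi(\x,y) \in A$, and apply completeness of $T$ to conclude $T \models (\exists\x)(\exists^{=1}y)\,\psi(\x,y)$.
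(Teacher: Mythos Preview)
Your proof is correct and amounts to the same argument as the paper's, packaged slightly differently. The paper does not cite \cref{trivial-dcl-formulas} but instead inlines its content: it observes that any formula in $A$ is equivalent to a finite disjunction $\bigvee_{i\le n}\psi_i$ of non-redundant formulas in $A$, and then uses completeness of $T$ to pass from non-trivial $\bigvee_{i\le n}\psi_i$-definable closure to non-trivial $\psi_i$-definable closure for some $i$. Your route---invoking \cref{trivial-dcl-formulas} contrapositively and then using completeness---is the same idea, just citing the earlier lemma rather than re-deriving it.
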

\begin{proof}
	Because $T$ is a complete $A$-theory, if $T$ has non-trivial $\bigvee_{i\le n}\psi_i$-definable closure for some $\bigvee_{i\le n}\psi_i \in A$, then $T$ has non-trivial $\psi_i$-definable closure for some $i\le n$.
	But every formula in $A$ is equivalent to a disjunct of finitely many non-redundant formulas in $A$, and so the result follows.
\end{proof}

\section{Ergodic structures via sampling from Borel structures}
\label{inv-meas-from-str}

In this section we describe a general framework for constructing ergodic structures by sampling from continuum-sized structures, which
dates back to work of Aldous \cite{MR637937} and Hoover \cite{Hoover79}, and which has been used more recently in \cite{MR2724668}, \cite{AFP}, \cite{AFNP}, \cite{AFKP}, and \cite{properly-ergodic}. 

We begin by defining Borel $L$-structures, and then describe how to obtain an ergodic structure 
via a sampling procedure from a Borel $L$-structure.
We then describe weighted homomorphism densities, which will allow us to show convergence of finite sampled substructures to the desired measure on countably infinite structures.

\subsection{Borel $L$-structures}

When considering whether a given theory has an ergodic model,
without loss of generality we may restrict to the case of relational languages (by 
\cref{bijection-preserves-invariant-measures},
as $T^+_A$ is in a relational language).

Throughout this subsection, $L$ will denote a countable relational language.

\begin{definition}
\label{BorelLstructure}
Let $\PP$ be an $L$-structure.
We say that $\PP$ is a \defn{Borel $L$-structure} if 
there is a Borel $\sigma$-algebra on 
its 
underlying set $\BB_\PP$
such that
for all relation symbols $R \in L$,
the set
$\{\aa \in \PP^j \st R^\PP(\aa)\}$
is a Borel subset of $\BB_\PP^j$, where $j$ is the arity of $R$.
\end{definition}

We next describe a map taking an element of $\cN^\omega$ to an
$L$-structure with underlying set $\Naturals$. 
The application of this map to an appropriate random sequence of elements of
a Borel $L$-structure will induce a random $L$-structure with underlying set $\Naturals$.

\begin{definition}
Let $\cN$ be an $L$-structure (of arbitrary cardinality).
Define the function $\F_{\cN}  \colon \cN^\omega \to \Str_L$  as follows.
For $\AA=(a_i)_{i\in \w} \in \cN^\omega$, let $\F_{\cN}(\AA)$ be the $L$-structure with underlying set $\Naturals$ satisfying
\[
\F_{\cN}(\AA) \, \models \,R(n_1, \dots, n_j) \quad \Leftrightarrow \quad 
\cN\models R(a_{n_1}, \dots, a_{n_j})
\]
for every relation symbol $R \in L$ and for all $n_1, \dots, n_j \in \Naturals$, where $j$ is the arity of $R$.
\end{definition}
Note that we consider equality as a logical symbol, not a relation symbol (so that equality is inherited from the underlying set.

Observe that when
$\PP$ is a Borel $L$-structure, 
$\F_{\PP}$ is a Borel measurable function 
(see \cite[Lemma~3.3]{AFP}).

\begin{definition}
\label{mupm}
Let $\PP$ be a Borel $L$-structure,
and let $m$ be a probability measure on $\BB_\PP$.
The measure $\mu_{(\PP,m)}$ on $\Str_L$ is defined to be
$m^{\infty}\circ \F_{\PP}^{-1}$,
where $m^\infty$ is the product measure on $\BB_\PP^\omega$.
\end{definition}

Note that $m^\infty$ is invariant under arbitrary reordering of the indices.
We will obtain an invariant measure on $\Str_L$ by taking the distribution of the \emph{random} structure with underlying set $\Naturals$ corresponding to an $m$-i.i.d.\ sequence of elements of 
$\PP$.

Note that 
$\mu_{(\PP,m)}$ is is a probability measure, namely the distribution of a random
element in $\Str_L$
induced via $\F_{\PP}$ by an $m$-i.i.d.\ sequence on $\BB_\PP$.
The invariance of $m^\infty$
under the action of $\sym$ on $\BB_\PP^\omega$ yields the invariance of $\mu_{(\PP,m)}$ under the logic action.
In particular, $\mu_{(\PP,m)}$ is a $\sym$-invariant measure.

\begin{lemma}[{\cite[Lemma~3.5]{AFP}}]
\label{mu-is-invariant}
Let $\PP$ be a Borel $L$-structure,
and let $m$ be a probability measure on $\BB_\PP$.
Then $\mu_{(\PP, m)}$ is $\sym$-invariant.
\end{lemma}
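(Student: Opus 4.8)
The plan is to exhibit $\F_\PP$ as an equivariant map between $\PP^\omega$ and $\Str_L$, each carrying an action of $\sym$, and then invoke the invariance of the i.i.d.\ product measure $m^\infty$ under reindexing. Let $\sym$ act on $\PP^\omega$ by reindexing: for $g\in\sym$ and $\AA = (a_i)_{i\in\w}\in\PP^\omega$, set $g\cdot\AA \defas (a_{g^{-1}(i)})_{i\in\w}$; this is a left action.

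\textbf{Equivariance of $\F_\PP$.} First I would verify that $\F_\PP(g\cdot\AA) = g\cdot\F_\PP(\AA)$ for every $g\in\sym$ and $\AA\in\PP^\omega$, which is a direct unwinding of the definitions. For a relation symbol $R\in L$ of arity $j$ and $n_1,\dots,n_j\in\Naturals$, the structure $\F_\PP(g\cdot\AA)$ satisfies $R(n_1,\dots,n_j)$ iff $\PP\models R(a_{g^{-1}(n_1)},\dots,a_{g^{-1}(n_j)})$, which holds iff $\F_\PP(\AA)\models R(g^{-1}(n_1),\dots,g^{-1}(n_j))$, i.e., iff $g\cdot\F_\PP(\AA)\models R(n_1,\dots,n_j)$. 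From this it follows that for any Borel $X\subseteq\Str_L$ and any $g\in\sym$ we have $\F_\PP^{-1}(g\cdot X) = g\cdot\F_\PP^{-1}(X)$; here $\F_\PP^{-1}(X)$ is Borel since $\F_\PP$ is Borel measurable (as noted above), and $g\cdot X$ is Borel since the logic action is by Borel automorphisms of $\Str_L$.

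\textbf{Invariance of $m^\infty$ and conclusion.} Since $m^\infty$ is the product of countably many copies of the single measure $m$, it is invariant under the reindexing action of $\sym$ on $\PP^\omega$; this is standard and is verified on the generating cylinder sets, then extended by uniqueness of the product measure. Combining the two displayed facts,
\[
\mu_{(\PP,m)}(g\cdot X) = m^\infty\bigl(\F_\PP^{-1}(g\cdot X)\bigr) = m^\infty\bigl(g\cdot\F_\PP^{-1}(X)\bigr) = m^\infty\bigl(\F_\PP^{-1}(X)\bigr) = \mu_{(\PP,m)}(X),
\]
so $\mu_{(\PP,m)}$ is $\sym$-invariant. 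I do not anticipate a genuine obstacle; the only points needing care are the bookkeeping of $g$ versus $g^{-1}$ in the equivariance step and the measurability remarks, and since this is exactly \cite[Lemma~3.5]{AFP} the argument could equally be quoted rather than reproduced.
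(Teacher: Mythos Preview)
Your proof is correct and follows exactly the approach the paper sketches: the paper does not give a formal proof of this lemma (it is cited from \cite[Lemma~3.5]{AFP}), but in the paragraph preceding the lemma it states precisely your two ingredients---that $m^\infty$ is invariant under reindexing and that this invariance transfers to $\mu_{(\PP,m)}$ via $\F_\PP$. Your write-up is simply a careful unwinding of that sketch, with the equivariance of $\F_\PP$ made explicit.
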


Recall that a measure is said to be \defn{continuous} (or
\emph{atomless}) if it assigns measure zero to every singleton. 
The following lemma describes several key properties of the invariant measures obtained by sampling using continuous measures.

\begin{lemma}
\label{continuouslemma}
Let $\PP$ be a Borel $L$-structure,
and let $m$ be a continuous probability measure on $\BB$.  Then 
$\mu_{(\PP, m)}$ 
is an ergodic structure that is
concentrated on the 
union
	of isomorphism classes 
of countably infinite substructures of $\PP$.
\end{lemma}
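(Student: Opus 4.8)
The plan is to establish the three asserted properties of $\mu_{(\PP,m)}$ — that it is ergodic, that it is an ergodic structure (i.e.\ a genuine invariant measure, which is already \cref{mu-is-invariant}), and that it concentrates on the union of isomorphism classes of countably infinite substructures of $\PP$ — by exploiting the i.i.d.\ nature of the sampling sequence. First I would recall from \cref{mu-is-invariant} that $\mu_{(\PP,m)}$ is $\sym$-invariant, so the content is ergodicity plus the concentration statement.

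For ergodicity, the key observation is that $\mu_{(\PP,m)} = m^\infty \circ \F_\PP^{-1}$ where $m^\infty$ is a product measure on $\BB_\PP^\omega$. The Hewitt–Savage zero-one law says that $m^\infty$ assigns measure $0$ or $1$ to every exchangeable (i.e.\ finite-permutation-invariant) event in the product $\sigma$-algebra, and more is true: the $\sym$-invariant $\sigma$-algebra is trivial mod $m^\infty$. I would argue that if $B \subseteq \Str_L$ is Borel with $\mu_{(\PP,m)}(B \,\Delta\, \tau^{-1}B) = 0$ for all $\tau \in \sym$, then $\F_\PP^{-1}(B) \subseteq \BB_\PP^\omega$ is an $m^\infty$-almost-invariant set under the coordinate-permutation action of $\sym$, because $\F_\PP$ intertwines the two actions; hence $m^\infty(\F_\PP^{-1}(B)) \in \{0,1\}$ by Hewitt–Savage, giving $\mu_{(\PP,m)}(B) \in \{0,1\}$. (Alternatively one can cite the standard fact, e.g.\ from \cite{MR2161313}, that the distribution of an exchangeable sequence that is i.i.d.\ is ergodic for the shift/permutation action; the de Finetti-style directing measure is a point mass precisely because the sequence is genuinely i.i.d.) This step is essentially a citation to Hewitt–Savage combined with the equivariance of $\F_\PP$.

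For the concentration statement, I would observe that for $m$-almost every sequence $\AA = (a_i)_{i \in \omega} \in \BB_\PP^\omega$, since $m$ is continuous the $a_i$ are pairwise distinct (each pair collides with $m^2$-probability $0$, and there are countably many pairs). For any such $\AA$, the map $n \mapsto a_n$ is a bijection from $\Naturals$ onto $\{a_i : i \in \omega\}$, and by the defining property of $\F_\PP$ it is an isomorphism from $\F_\PP(\AA)$ onto the substructure of $\PP$ with underlying set $\{a_i : i \in \omega\}$, which is a countably infinite substructure of $\PP$. Hence $\F_\PP(\AA)$ lies in the union of isomorphism classes of countably infinite substructures of $\PP$, and so $\mu_{(\PP,m)}$ concentrates on that (Borel, by Lopez–Escobar, since it is the class of models of the disjunction of the relevant Scott sentences) set.

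The main obstacle I expect is making the ergodicity argument fully rigorous: one must check that $\F_\PP$ genuinely intertwines the $\sym$-action on $\BB_\PP^\omega$ (reindexing coordinates) with the logic action on $\Str_L$, so that a $\mu_{(\PP,m)}$-almost-invariant set pulls back to an $m^\infty$-almost-invariant set to which Hewitt–Savage applies; and one must be slightly careful that ``almost invariant'' is preserved under pullback (it is, since $\F_\PP^{-1}(B \,\Delta\, \tau^{-1}B) = \F_\PP^{-1}(B) \,\Delta\, \tau^{-1}\F_\PP^{-1}(B)$ and $\F_\PP$ pushes $m^\infty$ to $\mu_{(\PP,m)}$). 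Everything else is routine measure theory. Since this lemma is quoted verbatim from \cite[Lemma~3.5]{AFP} (for invariance) together with standard sampling facts, I would keep the proof brief and lean on those references.
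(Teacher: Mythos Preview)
Your proposal is correct and matches the paper's approach essentially line for line. The paper argues the concentration statement exactly as you do (continuity of $m$ forces the i.i.d.\ sample to have pairwise distinct entries almost surely, so $\F_\PP(\AA)$ is isomorphic to a countably infinite induced substructure of $\PP$), and for ergodicity it simply cites \cite[Proposition~2.24]{AFKP} rather than spelling out the Hewitt--Savage argument you sketch; your equivariance-plus-Hewitt--Savage route is precisely the content of that reference.
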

\begin{proof}
Let
$\AA = (a_i)_{i\in \w}$ be
an $m$-i.i.d.\ sequence of elements of $\BB$.
Note that the induced countable structure $\F_{\PP}(\AA)$ is now a \emph{random $L$-structure}, i.e., an $\Str_L$-valued random variable, whose distribution is $\mu_{(\PP,m)}$.
Because $m$ is continuous, and since for any $k\ne \ell$ the random variables $a_k$
and $a_{\ell}$ are independent,  the sequence $\AA$ has no repeated entries, almost surely.
Hence
$\F_{\PP}(\AA)$
is almost surely isomorphic to
a countably infinite (induced) substructure of $\PP$.

	The measure $\mu_{(\PP, m)}$ is ergodic, as shown in \cite[Proposition~2.24]{AFKP}.
\end{proof}

The fact that $\mu_{(\PP, m)}$ is concentrated on substructures of $\PP$ (up to isomorphism)
will be extended in \cref{Lemma:approximable-type-omission}, where we show how to make it 
almost surely satisfy a
given theory.

\subsection{Weighted homomorphism densities}
\label{hom-densities}
Weighted homomorphism densities will allow us 
in \cref{convergence-lemma}
to show that the weak limit of finite sampled substructures of a Borel $L$-structure achieves the desired invariant measure on countably infinite structures.
\begin{definition}
	Suppose $L' \subseteq L$, and let
	$\M$ be an $L'$-structure and $\N$ an $L$-structure.
	Define a map $f$ from the underlying set of $\M$ to the underlying set of $\N$ to be a \defn{homomorphism} if it is a homomorphism from $\M$ to $\N|_{L'}$ (the $L'$-reduct of $\N$), and a \defn{full homomorphism} if it also preserves all non-relations of $L'$.
	Write $\Hom(\M, \N)$ and $\Full(\M, \N)$ to denote these respective classes of maps.
\end{definition}

\begin{definition}
Suppose $L' \subseteq L$, with $L'$ finite, and let
	 $\M$ be a finite $L'$-structure and $\N$ an arbitrary $L$-structure.
Let $m$ be a probability measure on the underlying set of $\N$. 
Define 
the \defn{weighted full homomorphism density} $\tf(\M, (\N, m))$ to be the probability that assigning the elements of $\M$ to elements of $\N$ in an $m$-i.i.d.\ way yields a full homomorphism, i.e., a map in which relations and non-relations are preserved. Specifically,
\[
	\tf(\M, (\N, m)) \defas \int_{\Full(\M,\, \N)} \ \mathrm{d}m^{|\M|},
\]
which reduces to 
\[
	\tf(\M, (\N, m)) = \sum_{f\in \Full(\M,\, \N)} \ \prod_{a\in\M} m\bigl(f(a)\bigr)
\]
when $\N$ is countable.
\end{definition}

This notion extends the case where $\M$ is a simple graph and $\N$ is an edge-weighted graph, as in, e.g., 
\cite[\S5.2.1]{MR3012035}.
For more details on full homomorphism densities, see \cite[\S3.1]{AFNP}.

\begin{proposition}
	\label{fullhom-determines-muPm}
	Let $\N$ be a finite $L$-structure and $m$ a probability measure on the underlying set of $\N$.
	Then $\mu_{(\N, m)}$ is completely determined by the sequence of numbers
	\[
		\bigl \{
			\tf(\M, (\N, m))
		\bigr \}_{\M},
	\]
	where $\M$ ranges over finite $L'$-structures, where $L' \subseteq L$ and $L'$ is finite .
\end{proposition}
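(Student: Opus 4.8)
The plan is to pin down $\mu_{(\N,m)}$ on a generating subalgebra of the Borel $\sigma$-algebra of $\Str_L$ and then invoke the standard fact that a probability measure is determined by its restriction to a generating algebra. Concretely, let $\mc{A}_0$ be the algebra of finite boolean combinations of basic clopen sets $\llrr{\varphi(n_1,\dots,n_j)}$ with $\varphi$ atomic and $n_1,\dots,n_j\in\Nats$; this generates the $\sigma$-algebra by definition of $\Str_L$. It suffices to show that $\mu_{(\N,m)}(B)$, for $B\in\mc{A}_0$, is a function of the sequence $\{\tf(\M,(\N,m))\}_\M$ alone.

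The key computation identifies cylinder measures with weighted full homomorphism densities. Fix a finite sublanguage $L'\subseteq L$, distinct naturals $n_1,\dots,n_N$, and a complete quantifier-free $L'$-type $q(x_1,\dots,x_N)$ that decides $x_i\neq x_j$ for all $i\neq j$; let $\M_q$ be the $L'$-structure with underlying set $\{n_1,\dots,n_N\}$ that $q$ describes, so $R^{\M_q}(n_{i_1},\dots,n_{i_k})$ holds exactly when $R(x_{i_1},\dots,x_{i_k})\in q$. Unwinding the definitions of $\F_\N$ and of satisfaction: for $\AA=(a_i)_{i\in\w}\in\N^\w$, the structure $\F_\N(\AA)$ realizes $q$ at $(n_1,\dots,n_N)$ if and only if the map $n_i\mapsto a_{n_i}$ preserves every relation and non-relation of $L'$, i.e.\ lies in $\Full(\M_q,\N)$ --- the equality literals of $q$ hold automatically since the $n_i$ are distinct, while the relational literals translate precisely into the fullness condition. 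Since whether $\F_\N(\AA)$ realizes $q$ at $(n_1,\dots,n_N)$ depends only on the coordinates $a_{n_1},\dots,a_{n_N}$, pushing the product measure forward through $\F_\N$ and marginalizing gives
\[
\mu_{(\N,m)}\bigl(\llrr{q(n_1,\dots,n_N)}\bigr)\;=\;\int_{\Full(\M_q,\N)}\mathrm{d}m^{|\M_q|}\;=\;\tf(\M_q,(\N,m)),
\]
and $\tf(\M_q,(\N,m))$ depends only on the isomorphism type of $\M_q$, hence is one of the numbers in the given sequence.

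To finish, I would observe that an arbitrary $B\in\mc{A}_0$ has the form $\llrr{\Theta}$ for $\Theta$ a finite boolean combination of atomic formulas mentioning only naturals from a finite set $S=\{n_1,\dots,n_N\}$ and relation symbols from a finite $L'\subseteq L$. The finitely many complete quantifier-free $L'$-types $q$ in $x_1,\dots,x_N$ that assert $x_i\neq x_j$ for all $i\neq j$ yield clopen sets $\llrr{q(n_1,\dots,n_N)}$ that partition $\Str_L$ (every structure realizes exactly one such type at the distinct tuple $(n_1,\dots,n_N)$), and $\llrr{\Theta}$ is the union of the blocks on which $\Theta$ holds. Therefore $\mu_{(\N,m)}(\llrr{\Theta})=\sum_{q\models\Theta}\tf(\M_q,(\N,m))$, a function of the sequence alone; since $\mc{A}_0$ generates the Borel $\sigma$-algebra and $\mu_{(\N,m)}$ is a probability measure, this determines $\mu_{(\N,m)}$. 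The only real subtlety --- and the step to carry out carefully --- is the bookkeeping around equality: a cylinder based at distinct naturals imposes no equality constraint, so it matches a genuine $L'$-structure together with a fullness (not embedding) condition, and conversely only the quantifier-free types with $x_i\neq x_j$ are realizable there; once this is set up correctly, both the computation and the concluding measure-theoretic step are routine.
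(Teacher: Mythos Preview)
Your proof is correct and follows essentially the same approach as the paper's: both compute the measure of a cylinder $\llrr{\varphi(n_0,\ldots,n_{r-1})}$ as a sum of weighted full homomorphism densities over the $L'$-structures (equivalently, complete qf $L'$-types with pairwise inequality) compatible with $\varphi$, then invoke uniqueness of extension from a generating ring/algebra. The only cosmetic difference is that the paper phrases the final step via the Carath\'eodory extension theorem (defining a pre-measure $\nu$ and noting it agrees with $\mu_{(\N,m)}$), whereas you appeal directly to the fact that a probability measure is determined by its values on a generating algebra; these are equivalent here.
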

\begin{proof}
	For any finite $L' \subseteq L$	and any $L'$ structure $\M$ with underlying set $\{0, \ldots, |\M| - 1\}$, let $p_\M$ be the
	qf-type of $\{0, \ldots, |\M| - 1\}$
		in $\M$.
		For any finite $L' \subseteq L$ and quantifier-free $L'$-formula $\varphi$ with $|\M|$-many free variables, 
		let $\nu_{L'}(\llrr{\varphi(n_0, \ldots, n_{|\M|-1})})$ be the quantity  
	\[			\sum\{ \tf(\M, (\N, m)) \st \M \text{ is an $L'$-structure and } \models p_\M \to \varphi \}.
			\]

			Let $L'$ and $L''$ be finite languages such that $L' \subseteq L'' \subseteq L$. 
			Let $\psi$ be a quantifier-free $L'$-formula; in particular, $\psi$ is also an $L''$-formula. 
			Note that $\nu_{L''}(\llrr{\psi(n_0, \ldots, n_{|\M|-1})})$ is the probability that an $m$-i.i.d.\ sequence of elements of $\N$ of length $|\M|$ satisfies $\psi$.
			Hence $\nu_{L''}(\llrr{\psi(n_0, \ldots, n_{|\M|-1})})$ does not depend on the choice of $L''$ extending $L'$, and so we may define
			$\nu(\llrr{\psi(n_0, \ldots, n_{|\M|-1})})$ to be $\nu_{L'}(\llrr{\psi(n_0, \ldots, n_{|\M|-1})})$.

			Define $\Theta$ to be the collection of $\llrr{\varphi(n_0, \ldots, n_{r-1})}$
			where
			$n_0, \ldots, n_{r-1}, r \in \Nats$  and $\varphi$ is a quantifier-free $L$-formula with $r$-many free variables.
			Note that $\Theta$ is a ring and $\nu$ is a pre-measure on $\Theta$ that agrees with
	$\mu_{(\N, m)}$.
By the Carath\'eodory extension theorem, the unique extension of $\nu$ to a measure 
	must be $\mu_{(\N, m)}$.
\end{proof}
For similar results in the special case of graphs, see \cite[Chapter~11]{MR3012035}. For related results and discussion, see also \cite[\S1.1--1.2]{kruckman-thesis}.

\section{Layerings}
\label{sec:layerings}

Now that we have a method for producing
an 
ergodic structure
from a
Borel $L$-structure with an associated measure,
we need a way to construct Borel $L$-structures such that the resulting 
ergodic structures  almost surely
satisfy 
the desired theory.  Our general method is to build a Borel $L$-structure as the limit of a directed system of finite structures, as we describe in this section. 
This construction is similar in spirit to the inverse limit construction of \cite{AFNP}.

Throughout this section,
we restrict $L$ to be a countable \emph{relational} language. 

\subsection{Basic layerings}
\label{Subsection:layerings}

Our methods rely on a notion
called a \emph{layering}, which we introduce here.

\begin{definition}
Suppose $\PP_0, \PP_1$ are Borel $L$-structures on underlying Borel spaces $\BB_0, \BB_1$ respectively. We say that a map $f\colon \PP_0 \rightarrow \PP_1$ is a \defn{Borel homomorphism} when $f$ is a homomorphism of $L$-structures and is a Borel function. 
\end{definition}

We will be interested in a specific kind of Borel $L$-structure which arises as the limit of a sequence of Borel homomorphisms. 

By \emph{countable Borel $L$-structure}, we mean a countable (finite or infinite) $L$-structure with the discrete $\sigma$-algebra (i.e., that generated by all singletons).
We define the notion of an $L$-layering in terms of a directed system of countable Borel $L$-structures, but 
in the $L$-layerings that we build in \cref{sec:existence}, these Borel $L$-structures will always be finite.

\begin{definition}
	We define an \defn{$L$-layering}
	$(\N, m, i)$ to be a directed system indexed by $(\omega, <)$
	satisfying the following,
	for each $n \in \w$.

\begin{itemize}
\item 
	The structure
		$\N_n$ is a countable Borel $L$-structure, whose underlying Borel set we denote by $N_n$.
	\item
		The measure $m_n$ is a probability measure on $N_n$ whose only measure zero set is $\emptyset$.

\item For each $k \leq n$, the map $i_{n,k} \colon \N_{n} \rightarrow \N_k$ is a measure-preserving surjective homomorphism. In particular we have
		$m_n(i_{n,k}^{-1}(a)) = m_k(a)$
		for every $a \in N_k$.
\end{itemize}
For $n\in\w$, we refer to $(\N_n, m_n)$ as \defn{level $n$} of $(\N, m, i)$.
\end{definition}

We now describe the limit of a layering. 

\begin{definition}
	Suppose $\hat{\N}  = (\N, m, i)$
is an $L$-layering. 
We define the \defn{limit} of $\hat{\N}$ to be the pair $(\N_\w, m_\w)$ satisfying the following.
\begin{itemize}
\item The $L$-structure $\N_\w$ has underlying set 
	\[
		N_\w\defas 
		\Bigl\{\{a_k\}_{k \in \w} \st 
		(\forall n \in \w) \, 
		\Bigl(a_n \in \N_n
	\ \And \ 
		(\forall k \leq n)  \,
	\bigl(i_{n,k}(a_n) = a_k\bigr)\Bigl )\Bigr\}.
	\]

\item 
	For $n\in\w$, the map $i_{\omega,n} \colon \N_\omega \to \N_n$ sends the tuple $\{a_k\}_{k\in\omega}$ to $a_n$.
\item 
	The Borel structure on $\N_\w$ is that generated by 
	sets of the form 
	$i_{\omega, n}^{-1}(b)$, where $b\in N_n$ and $n\in\w$.

\item For any relation symbol $R \in L$,  
	\[\N_\w \models \neg R(\{a^1_k\}_{k \in \w}, \dots, \{a^j_k\}_{k \in \w})\]
	if and only if 
	\[\N_n \models \neg R(a^1_n, \dots, a^j_n)\] for some $n\in \omega$, where $j$ is the arity of $R$. 

\item The measure $m_\w$ is the unique probability measure satisfying
	\[m_\w\bigl(\bigl\{\{a_k\}_{k \in \w} \st  a_j = b\bigr\}\bigr) = m_j(b)\]
	for all $j \in \w$ and $b \in \N_j$. 
\end{itemize}
\end{definition}

It is easy to check that this is well-defined; in particular,
	by the Carath\'eodory extension theorem
	the measure is uniquely determined by specifying the measures of the sets of the form
	$i_{\omega, n}^{-1}(b)$, where $b\in N_n$ and $n\in\w$.

	Observe that $\N_\w$ is the limit in the category of Borel $L$-structures and Borel homomorphisms of the directed system $\hat{\N}$.
	The key property of
$(\N_\w, m_\w)$ 
is that the ergodic structure $\mu_{(N_\w, m_\w)}$ obtained by sampling from $(\N_\w, m_\w)$ as in \cref{mu-is-invariant}
is the weak limit of the sequence of ergodic structures obtained by sampling from each of the $(\N_n, m_n)$ for $n\in\omega$. 
We now make this precise.  Let $\weakconv$ denote convergence in the weak (a.k.a.\ weak-$*$ or vague) topology.

\begin{lemma}
	\label{convergence-lemma}
	The sequence of measures $\{\mu_{(\N_n, m_n)}\}_{n\in\w}$ converges weakly to $\mu_{(\N_\w, m_\w)}$, i.e.,
	we have
	$\mu_{(\N_n, m_n)} \weakconv \mu_{(\N_\w, m_\w)}$.
\end{lemma}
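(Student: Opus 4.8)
The plan is to reduce weak convergence to the convergence of finitely many weighted full homomorphism densities, and then to establish that convergence by pulling everything back onto the limit space along the measure-preserving maps $i_{\w,n}$. First, recall that $\Str_L$, with the topology generated by the basic clopen sets $\llrr{\varphi(n_1,\dots,n_j)}$ for atomic $\varphi$, is a compact, metrizable, zero-dimensional space, so by Stone--Weierstrass the linear span of the indicator functions of clopen sets is dense in $C(\Str_L)$. Hence, since each $\mu_{(\N_n,m_n)}$ and $\mu_{(\N_\w,m_\w)}$ is a probability measure, it suffices to show that $\mu_{(\N_n,m_n)}(\llrr{p(0,1,\dots,r-1)})\to\mu_{(\N_\w,m_\w)}(\llrr{p(0,1,\dots,r-1)})$ for every $r\in\Nats$, every finite relational $L'\subseteq L$, and every non-redundant complete quantifier-free $L'$-type $p$ in the variables $x_0,\dots,x_{r-1}$ (such a $p$ is finite): for each fixed $r$ and $L'$ these cylinders form a clopen partition of $\Str_L$, these partitions refine one another, and together they generate the clopen algebra. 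By the computation in the proof of \cref{fullhom-determines-muPm} --- which writes such a probability as the chance that an $m$-i.i.d.\ tuple realizes $p$, hence as a finite sum of terms $\tf(\M,(\PP,m))$ over a fixed finite collection of finite $L'$-structures $\M$, and which applies to Borel (not just finite) structures $\PP$ --- it then suffices to prove that for each finite relational $L'\subseteq L$ and each finite $L'$-structure $\M$,
\[
	\tf\bigl(\M,(\N_n,m_n)\bigr)\ \longrightarrow\ \tf\bigl(\M,(\N_\w,m_\w)\bigr)\qquad\text{as }n\to\infty.
\]

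To prove this, fix $\M$, put $p=|\M|$, fix an enumeration of the universe of $\M$, and identify a full homomorphism $\M\to\N$ with the tuple of images of the enumerated elements, so that $\Full(\M,\N)$ becomes a subset of $N^p$. Since each $i_{\w,n}\colon\N_\w\to\N_n$ is a measure-preserving homomorphism, its $p$-fold power $i_{\w,n}\times\cdots\times i_{\w,n}\colon N_\w^p\to N_n^p$ is a Borel map pushing $m_\w^p$ forward to $m_n^p$; therefore $\tf(\M,(\N_n,m_n))=m_\w^p(E_n)$, where $E_n\subseteq N_\w^p$ is the preimage of $\Full(\M,\N_n)$ under this $p$-fold power, and likewise $\tf(\M,(\N_\w,m_\w))=m_\w^p(F)$ with $F\defas\Full(\M,\N_\w)$; both $E_n$ and $F$ are Borel. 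By the bounded convergence theorem it is enough to show that $\mathbf{1}_{E_n}\to\mathbf{1}_F$ pointwise on $N_\w^p$. Here is the key point: fix $\bb=(b^1,\dots,b^p)\in N_\w^p$, where each $b^s=\{b^s_k\}_{k\in\w}$ is a coherent sequence, and write $b^s_n=i_{\w,n}(b^s)$. For a relation symbol $R\in L'$ of arity $\ell$ and indices $s_1,\dots,s_\ell$, the set of levels $n$ for which $\N_n\models R(b^{s_1}_n,\dots,b^{s_\ell}_n)$ is an \emph{initial segment} of $\w$ --- since each $i_{n,k}$ is a homomorphism, $R$ holding at level $n$ forces it at every lower level --- and, by the definition of $\N_\w$, this set equals all of $\w$ exactly when $\N_\w\models R(b^{s_1},\dots,b^{s_\ell})$. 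Likewise, for indices $i\ne j$: if $b^i\ne b^j$ then the coherent sequences differ at some level and hence at every higher level, so $b^i_n\ne b^j_n$ for all large $n$, whereas if $b^i=b^j$ then $b^i_n=b^j_n$ for all $n$. Thus every atomic condition (relational or equality) entering the definition of ``$\bb$ determines a full homomorphism into $\N_n$'' has truth value eventually constant in $n$, with eventual value the corresponding condition evaluated in $\N_\w$. As $L'$ and $\M$ are finite, only finitely many such conditions occur, so ``$\bb\in E_n$'' is a fixed Boolean combination of them; hence $\mathbf{1}_{E_n}(\bb)$ is eventually constant, with eventual value $\mathbf{1}_F(\bb)$, which is what we want.

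The step I expect to require the most care is this last one --- passing from the level-by-level behavior of the finite structures to the pointwise convergence of the $\mathbf{1}_{E_n}$. The delicate feature is that the transition maps are merely homomorphisms: along a coherent tuple, a relation's truth propagates from higher levels down to lower ones, but it can be destroyed on passing to higher levels, so ``$\bb\in E_n$'' is not monotone in $n$. The resolution is the observation that the set of levels on which a fixed atomic relation holds along $\bb$ is nevertheless an initial segment of $\w$, which upgrades this one-sided behavior to eventual constancy; the finiteness of $L'$ and of $\M$ then makes ``eventually'' uniform over the finitely many relevant atomic formulas. Everything else is routine: the reduction of weak convergence to convergence on a clopen basis of the compact zero-dimensional space $\Str_L$, the pushforward identity for product measures under a measure-preserving map, the reuse of the finite-sum-of-densities computation of \cref{fullhom-determines-muPm}, and the bounded convergence theorem. (One can also bypass homomorphism densities: for a fixed basic clopen $\llrr{\psi(0,\dots,r-1)}$, pulling back along $i_{\w,n}$ rewrites $\mu_{(\N_n,m_n)}(\llrr{\psi(0,\dots,r-1)})$ as $\int_{N_\w^r}\mathbf{1}\bigl[\N_n\models\psi(i_{\w,n}(b_0),\dots,i_{\w,n}(b_{r-1}))\bigr]\,dm_\w^r$, whose integrand is, by the same initial-segment observation, eventually constant in $n$ at every $\bb$, so bounded convergence applies directly; I present the density route since it fits the surrounding development.)
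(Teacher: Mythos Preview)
Your proof is correct. Both you and the paper reduce weak convergence to the convergence of the full homomorphism densities $\tf(\M,(\N_n,m_n))\to\tf(\M,(\N_\omega,m_\omega))$ for finite $L'$-structures $\M$ (and then invoke the clopen basis / Portmanteau), but the two arguments for this density convergence differ. The paper asserts that the sequence $\{\tf(\M,(\N_k,m_k))\}_k$ is non-increasing---arguing that since $i_{k+1,k}$ is a homomorphism, a random map into $\N_{k+1}$ that is a homomorphism composes to one into $\N_k$---and then identifies the limit with the density in the inverse-limit probability space. You instead pull everything back to $N_\omega^p$ along the measure-preserving maps $i_{\omega,n}$ and apply bounded convergence, after showing that along each fixed coherent tuple every atomic truth value is eventually constant (since the set of levels at which it holds is an initial segment) and equals the value in $\N_\omega$.

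Your route is in fact more robust on exactly the point you flag. Because the transition maps are only homomorphisms, composing a \emph{full} homomorphism $\M\to\N_{k+1}$ with $i_{k+1,k}$ need not yield a full homomorphism into $\N_k$, so the monotonicity of $\tf$ asserted in the paper does not follow literally from the sentence preceding it (and indeed can fail: take $L'=\{R\}$ unary, $\N_0$ with two elements both satisfying $R$, $\N_1$ with two elements only one satisfying $R$, uniform measures, and $\M$ a single point with $\neg R$; then $\tf$ jumps from $0$ to $\tfrac12$). What is monotone is the ordinary homomorphism density, from which one can recover convergence of $\tf$ by inclusion--exclusion, but the paper leaves this implicit. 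Your initial-segment observation sidesteps the issue entirely: it yields pointwise convergence of the indicators without any monotonicity, and bounded convergence finishes. The alternative you note at the end---bypassing densities and applying the same pullback/bounded-convergence argument directly to a basic clopen---works equally well and is perhaps the cleanest packaging.
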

\begin{proof}
	Fix a finite sublanguage $L' \subseteq L$ and a finite $L'$-structure $\M$. 
	For each $k\in \omega$, consider the weighted full homomorphism density
	$\tf(\M, (\N_k, m_k))$ of $\M$ in $(\N_k, m_k)$.
		Observe that because the map $i_{k+1,k} \colon \N_{k+1} \to \N_k$ is a homomorphism, the probability that the random map 
		$\M \to \N_k$
		(that independently assigns each element of $\M$ an element of $\N_k$ according to $m_k$)
is a homomorphism is at least the probability that the corresponding random map
$\M \to \N_{k+1}$ is a homomorphism.
	Therefore the sequence of positive reals $\{\tf(\M, (\N_k, m_k))\}_{k\in \omega}$ is non-increasing. 
	In particular, 
	the limit $\lim_{k\to \infty} \tf(\M, (\N_k, m_k))$ exists and is equal to $\tf(\M, (\N_\omega, m_\w))$, because 
	$(\N_\w, m_\w)$ is the limit of the directed system formed by $\{(\N_k, m_k)\}_{k\in\Nats}$ and the maps $\{i_{{k+1},k}\}_{k\in\Nats}$ in the category of probability spaces.
	Hence for any finite $L' \subseteq L$ and any quantifier-free $L'$-formula $\varphi$,
	and any $n_0, \ldots, n_{r-1} \in \Nats$, where $\varphi$ has $r$-many free variables,
	we have
	\[
		\lim_{k\to \infty} \mu_{(\N_k, m_k)}(\llrr{\varphi(n_0, \ldots, n_{r-1})})
		=
		\mu_{(\N_\w, m_\w)}(\llrr{\varphi(n_0, \ldots, n_{r-1})})
	\]
	by 
	\cref{fullhom-determines-muPm}.
	Note that every clopen set is of the form $\llrr{\varphi(n_0, \ldots, n_{r-1})}$.
	Therefore by the Portmanteau theorem, we have
	$\mu_{(\N_n, m_n)} \weakconv \mu_{(\N_\w, m_\w)}$.
\end{proof}

\subsection{Continuous limits}

We will mainly be interested in layerings whose limit measure is a continuous measure on the limit structure, i.e., where each singleton of the limit structure is a nullset of the limit measure. 
In this case we say that the layering is \defn{continuous}.
We now characterize such layerings.

\begin{lemma}
\label{Lemma: continuous layering}
Suppose $\hat{\N} = (\N, m, i)$ is an $L$-layering with limit $(\N_\w, m_\w)$. Then the following are equivalent.
\begin{enumerate}
	\item[(a)] The probability measure $m_\w$ is continuous. 

	\item[(b)] For all $k \in \w$ and $a \in \N_k$, there are $n > k$ and $X, Y \subseteq \N_n$ such that 
\begin{itemize} 
\item $X, Y\subseteq i_{n, k}^{-1}(a)$, 

\item $X \cap Y = \emptyset$,

\item $m_n(X) > 1/3 \cdot m_k(a)$, and
\item $m_n(Y) > 1/3 \cdot m_k(a)$. 
\end{itemize}
\end{enumerate}
\end{lemma}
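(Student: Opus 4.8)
The plan is to prove the equivalence $(a)\Leftrightarrow(b)$ in \cref{Lemma: continuous layering}. Recall that $m_\w$ is the unique measure on $\N_\w$ with $m_\w(i_{\omega,n}^{-1}(b)) = m_n(b)$ for all $n$ and $b\in N_n$, and that the sets $i_{\omega,n}^{-1}(b)$ generate the Borel structure. Since every point $x = \{a_k\}_{k\in\w}$ of $\N_\w$ is the intersection $\bigcap_{n\in\w} i_{\omega,n}^{-1}(a_n)$ of a decreasing sequence of measurable sets, continuity of $m_\w$ from above gives $m_\w(\{x\}) = \lim_{n\to\infty} m_n(a_n)$. Thus $m_\w$ is continuous if and only if for every such coherent sequence $\{a_k\}_{k\in\w}$ we have $\lim_{n\to\infty} m_n(a_n) = 0$; since the sequence $m_n(a_n)$ is non-increasing (because $i_{n,k}$ is measure-preserving, so $m_n(a_n) \le m_k(a_k)$ when $i_{n,k}(a_n)=a_k$), this limit is the infimum.

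For the direction $(b)\Rightarrow(a)$: assume $(b)$ and fix a coherent sequence $\{a_k\}_{k\in\w}$; I want to show $\inf_n m_n(a_n) = 0$. Starting from any level $k$, apply $(b)$ to the element $a_k$ to obtain $n > k$ and disjoint $X, Y \subseteq i_{n,k}^{-1}(a_k)$ each of measure more than $\tfrac13 m_k(a_k)$. Since $X$ and $Y$ are disjoint subsets of the fiber $i_{n,k}^{-1}(a_k)$, which has measure $m_k(a_k)$, and the fiber element $a_n$ is a single point of $N_n$, the point $a_n$ lies in at most one of $X, Y$; hence the fiber $i_{n,k}^{-1}(a_k)$ minus the one containing $a_n$ still has measure more than $\tfrac13 m_k(a_k)$, so $m_n(a_n) \le m_k(a_k) - \tfrac13 m_k(a_k) = \tfrac23 m_k(a_k)$. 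Iterating this argument produces an increasing sequence of levels along which $m_n(a_n)$ decreases geometrically by a factor of $\tfrac23$, so the infimum is $0$, as desired. (One must be slightly careful: the conclusion of $(b)$ is about \emph{some} $a\in\N_k$, but here I apply it to the specific $a_k$, which is fine since $(b)$ is universally quantified over $a\in\N_k$.)

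For the direction $(a)\Rightarrow(b)$ (or rather its contrapositive $\neg(b)\Rightarrow\neg(a)$): suppose $(b)$ fails, so there are $k\in\w$ and $a\in\N_k$ such that for all $n > k$ and all disjoint $X, Y \subseteq i_{n,k}^{-1}(a)$, we have $m_n(X) \le \tfrac13 m_k(a)$ or $m_n(Y) \le \tfrac13 m_k(a)$. The goal is to build a coherent sequence $\{a_j\}_{j\in\w}$ with $a_k = a$ and $\inf_j m_j(a_j) \ge \tfrac13 m_k(a) > 0$, which by the computation above shows $m_\w$ is not continuous. The key observation is that the negation of $(b)$ says that within each fiber $i_{n,k}^{-1}(a)$, one cannot split the mass into two pieces each exceeding $\tfrac13 m_k(a)$; this forces, for each $n$, the existence of a ``heavy'' element $b\in i_{n,k}^{-1}(a)$ with $m_n(b) > \tfrac13 m_k(a)$ — because otherwise, greedily accumulating fiber elements (each of weight $\le \tfrac13 m_k(a)$) would at some point produce a partial sum $X$ landing in the interval $(\tfrac13 m_k(a), \tfrac23 m_k(a)]$, whose complement $Y = i_{n,k}^{-1}(a)\setminus X$ then also has measure $\ge \tfrac13 m_k(a)$, contradicting $\neg(b)$. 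Moreover this heavy element is \emph{unique} at each level $n$ (two such would themselves violate $\neg(b)$), and by measure-preservation the heavy element at level $n+1$ maps under $i_{n+1,n}$ to the heavy element at level $n$ (its image has measure $> \tfrac13 m_k(a)$, forcing it to be heavy). So the heavy elements form a coherent sequence above $a$, which extends downward from $k$ arbitrarily (taking images under the $i_{k,j}$ for $j<k$) and has all measures $> \tfrac13 m_k(a)$; hence the corresponding point of $\N_\w$ has positive $m_\w$-measure.

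I expect the main obstacle to be the $(a)\Rightarrow(b)$ direction, specifically making rigorous the ``greedy accumulation'' argument that extracts a heavy fiber element from the negation of the splitting condition: one must handle the case where the fiber is infinite (well-ordering or just enumerating the countably many fiber elements and taking partial sums, which converge), and carefully verify that if no single element is heavy then the partial sums can be made to land in the target window $(\tfrac13 m_k(a), \tfrac23 m_k(a)]$ — this uses that each individual jump is at most $\tfrac13 m_k(a)$ so the partial sums cannot jump over the window, together with the fact that the total fiber mass $m_k(a)$ exceeds $\tfrac23 m_k(a)$ so the window is actually reached. The remaining bookkeeping — coherence of the heavy sequence, extending below level $k$, and translating positive point-mass into failure of continuity — is routine given the preliminary computation $m_\w(\{x\}) = \lim_n m_n(a_n)$.
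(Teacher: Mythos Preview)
Your proof is correct and follows essentially the same strategy as the paper's: both directions rest on the identity $m_\w(\{a_j\}_j) = \inf_n m_n(a_n)$, derive a geometric $\tfrac23$-contraction for $(b)\Rightarrow(a)$, and for the converse extract a coherent sequence of ``heavy'' fiber elements from the failure of the splitting condition via a greedy partial-sum argument. Your threshold for heavy ($>\tfrac13\,m_k(a)$) differs slightly from the paper's ($>\tfrac23\,m_k(a)$), but the argument is otherwise identical --- including a minor shared boundary subtlety in the greedy step when fiber elements have weight exactly $\tfrac13\,m_k(a)$, where the strict inequality $m_n(Y)>\tfrac13\,m_k(a)$ may fail; this is easily patched (e.g., by taking the non-strict threshold $\ge\tfrac13\,m_k(a)$ and invoking K\"onig's lemma on the resulting inverse system of at-most-three-element sets in place of uniqueness).
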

\begin{proof}
	Suppose $m_\w$ is continuous. Towards a contradiction, assume that $k \in \w$ and $a \in \N_k$ are such that for all $n > k$ there are no $X, Y \subseteq \N_n$ satisfying the conditions of (b). 
	
	If for some $n>k$ every element $b \in \N_n$ satisfied $m_n(b) < 2/3 \cdot m_n(a)$, then we could construct $X$ and $Y$ satisfying the conditions of (b) by adding elements in decreasing mass to the set $X$ until its mass exceeded $1/3 \cdot m_k(a)$ and allowing $Y = N_n \setminus X$.

	Hence for every $n > k$ there is some $b_n \in \N_n$ such that $i_{n,k}(b_n) = a$ and $m_n(b_n) > 2/3 \cdot m_k(a)$;
	note that is exactly one such $b_n$.
	Therefore 
	we have
	$i_{n', n}(b_{n}) = b_{n'}$
	for $n'>n>k$.
	In particular, if we let $b_j = i_{k,j}(a)$ for $j \le k$, then $m_\w(\{b_j\}_{j\in \w}) > 2/3\cdot m_k(a)$, 
	contradicting the continuity of $m_\w$.

	Now suppose (b) holds, and let  $\{a_j\}_{j\in \w} \in \N_\w$. Then for every $k\in\w$ there is some $n > k$ such that 
	$m_n(a_n)  \le 2/3 \cdot m_k(a_k)$. Hence 
$m_\w (\{a_j\}_{j\in \w}) = \inf_j m_j(a_j),$  which is at most 
		$\left(\frac23\right)^{-q} \cdot m_0(a_0)$ 
		for all $q\in \w$, and so $m_\w (\{a_j\}_{j\in \w}) = 0$.
\end{proof}

\subsection{Regular layerings}

We will need two other specific conditions on a layering to allow us sufficient control over
the limit structure and measure of the layering.

First, we want to be able to define our layerings without specifying where all the mass is assigned at any given level;
this will allow us at later levels to \emph{add new elements}. We will accomplish this via a distinguished ``virtual'' element whose mass at each level consists of that mass not assigned to other ``real'' elements.

Second, at each level
we want to declare that certain first-order  quantifier-free formulas hold, and we want such formulas to continue to hold at later levels. Unfortunately, in arbitrary layerings different formulas may hold at different levels, as we 
required only that the maps between levels were homomorphisms, and not necessarily embeddings. 
We will address this by 
assigning each level a language such that the maps between a level and all higher ones preserve the qf-type (in the language of the level) of every
tuple that is non-redundant and whose image is also non-redundant.
In other words, for such tuples, formulas remain \emph{set}, once set at a lower level. 
Because we want to allow ourselves to split elements as we move up the levels of the layering, we do not require the qf-type of redundant tuples (or those whose image is redundant) to be set.

We now make both of these conditions precise. 

\begin{definition}
	A \defn{regular $L$-layering} is a layering $\hat{\N} = (\N, m, i)$ along with the following additional data.
\begin{itemize}
	\item For each $n\in\omega$ there is an element $*_n \in \N_n$, called the \defn{sink}
		of $\N_n$, such that 
	\begin{itemize}
		\item[(a)] all relations in $L$ hold of any tuple in $\N_n$ containing $*_n$,
	
		\item[(b)] for all $k \leq n$ we have $i_{n,k}(*_n) = *_k$, and
	
		\item[(c)] for all $\varepsilon > 0$ there is an $n \in \w$ such that $m_n(*_n) < \varepsilon$. 
	\end{itemize}

\item There is a sequence $\{L_n\}_{n \in \w}$ of countable relational languages such that for all $n\in\w$,
	\begin{itemize}
		\item[(d)] $L_n \subseteq L_{n+1}$,
	
		\item[(e)] $\bigcup_{j \in \w}L_j = L$,
	
		\item[(f)] for all $k \leq n$ and all non-redundant tuples $\a\in\N_n$, if
			$i_{n, k}(\a)$ is a non-redundant tuple in $\N_k$, then the  qf $L_k$-type of $\a$ is the same as that of $i_{n, k}(\a)$, and
	
		\item[(g)] 
			all tuples $\a\in \N_n$ and all $R \in L$ of arity $|\a|$, 
			if $\N_n \models \neg R(\a)$ then $R \in L_n$ and $\a$ is non-redundant.
	\end{itemize}
\end{itemize}
\end{definition}

We can think of a level as containing information about tuples in the resulting limit structure. 
Condition (a) asserts that the sink
$*_n$ is a \emph{neutral element} in that it doesn't force any structure at higher levels.
Condition (b) is straightforward, while (c) ensures that 
$m_\w(*_\omega) = 0$, where $*_\omega \defas \{*_j\}_{j\in\w}$.
Conditions (d) and (e) say that $L$ is an increasing union of sublanguages $L_n$, and (f) ensures that if a decision is made about a non-redundant tuple, that decision propagates to higher levels.
Condition (g) states that $\N_n$ is \emph{neutral with respect to $L \setminus L_n$} and makes no decisions about redundant tuples.

\subsection{Approximations to types}
We now introduce a condition on a continuous regular layering $(\N, m, i)$ which will ensure that 
$\mu_{(\N_\w, m_\omega)}$
omits a certain kind of type, which can be appropriately approximated in sublanguages.
We will also see that satisfying a pithy $\Pi_2$ sentence and omitting a qf-type can be expressed as omitting a type of this kind.

\begin{definition}
	Let $L$ be a countable relational language and let $\{L_j\}_{j\in\w}$ be a nondecreasing sequence of languages whose union is $L$.
Let $\hat{p}$ be a sequence $\{p_j\}_{j\in\w}$ of types such that each $p_j$ is a partial $L_j$-type in the same variables, and such that $\models p_k \to p_j$ for $j<k$.
Then $\hat{p}$ is an \defn{approximable $\{L_j\}_{j\in\w}$-type}  if
		for all $j < k$, for all $L_k$-structures $\M_k$ and $L_j$-structures $\M_j$, and for all 
		surjective embeddings
		$i_{k, j}$  from $\M_k$ to $\M_j$, if
		\begin{itemize}
			\item			$\a\in\M_k$ is a tuple satisfying $p_k$, and 
			\item 			$i_{k, j}$ is injective on the underlying set of $\a$,
		\end{itemize} 
		then the tuple $i_{k, j}(\a)$ satisfies $p_j$.
\end{definition}
Elements of the limit of a regular layering correspond to sequences with one element from each level of the layering, where the elements of the higher levels of the sequence project down to the elements at the lower levels of the sequence, via the maps of the layering. 
As such, a tuple of elements in the limit corresponds to a sequence of tuples from each level, on which the maps of the layering are injective past some point.

The idea behind the notion of an approximable type is to capture information about elements in the limit of a regular layering by considering properties of the corresponding sequence. 
However, because the approximable type is expressed in a way that is independent of the layering, we need it to cohere with any possible layering it could be asked to describe. This is the motivation behind the coherence condition with respect to arbitrary surjective embeddings.

\begin{definition}
	Let $\hat{\N} = (\N, m, i)$
	be a continuous regular $L$-layering, and suppose that $\{p_j\}_{j\in\w}$ is an approximable $\{L_j\}_{j\in\w}$-type.
	We say that  $\hat{\N}$
	\defn{asymptotically omits}  $\{p_j\}_{j\in\w}$ if 
	for all $k \in \w$ 
	there is an $n_k > k$ such that 
	\[
		\N_{n_k} \models \bigl(\forall \xx \in i_{n_k, k}^{-1}(\N_k \setminus *_k)\bigr)
		\,
		\neg p_k(\xx).
	\]
\end{definition}

The following key technical lemma will allow us to ensure that a limit measure satisfies or omits formulas of several particular forms.

\begin{lemma}
\label{Lemma:approximable-type-omission}
	Let $\hat{\N} = (\N, m, i)$
	be a continuous regular $L$-layering, and suppose $\{p_j\}_{j\in\w}$ is an approximable $\{L_j\}_{j\in\w}$-type.
	If $\hat{\N}$ asymptotically omits $\{p_j\}_{j\in\w}$ then 
	\[
		\mu_{(\N_\w, m_\w)}\Bigl( \Bigllrr{(\forall \xx) \bigvee_{j\in\w} \neg p_j(\xx)}\Bigr)  = 1.
	\]
\end{lemma}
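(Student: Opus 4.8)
The plan is to show that, almost surely, a sampled structure from $(\N_\w, m_\w)$ omits each $p_k$ ``in the limit'' over the nested languages $\{L_j\}_{j\in\w}$. Concretely, recall that $\mu_{(\N_\w, m_\w)}$ is the distribution of $\F_{\N_\w}(\AA)$ for $\AA = (a_i)_{i\in\w}$ an $m_\w$-i.i.d.\ sequence, and that each $a_i$ corresponds to a thread $\{a_i(n)\}_{n\in\w}$ with $a_i(n)\in\N_n$. Fix a tuple of indices $\bar{n} = n_1\cdots n_r$. Since $m_\w$ is continuous (the layering is continuous), almost surely the entries $a_{n_1},\dots,a_{n_r}$ are pairwise distinct; moreover, for each fixed level $k$, almost surely the projections $a_{n_1}(k),\dots,a_{n_r}(k)$ to $\N_k$ are either pairwise distinct or else become pairwise distinct at some higher level (and in fact, by continuity, the thread of each $a_{n_\ell}$ eventually avoids the sink $*$ and the projections separate). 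So it suffices to prove: for $m_\w$-almost every choice of a non-redundant tuple $\a = (\a(n))_{n\in\w}$ from $\N_\w$, there is some $j\in\w$ with $\a(j)$ non-redundant in $\N_j$, avoiding $*_j$, and $\N_j \models \neg p_j(\a(j))$ — and then the approximability of $\{p_j\}_{j\in\w}$ together with condition (f) of regularity will let me pull this back to $\N_\w$ to conclude $\N_\w \models \neg p_j(\a)$, hence $\F_{\N_\w}(\AA)$ realizes $\bigvee_{j\in\w}\neg p_j$ at that tuple.

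The key steps, in order, would be: (1) Reduce to a single tuple of indices $\bar n$ by a countable union bound — it suffices to show that for each $\bar n$, $\mu_{(\N_\w,m_\w)}\bigl(\llrr{\bigvee_{j\in\w}\neg p_j(\bar n)}\bigr) = 1$, since the universal quantifier over all tuples is a countable intersection. (2) Translate the event into a statement about the i.i.d.\ sample: $\a := (a_{n_1},\dots,a_{n_r})$ is an $m_\w^r$-distributed tuple, and by continuity of $m_\w$ plus condition (c) for the sink, almost surely $\a$ is non-redundant and, for every sufficiently large level $k$, the projection $\a(k)$ to $\N_k$ is non-redundant and avoids $(*_k,\dots,*_k)$ coordinatewise. (3) Use asymptotic omission: pick $k$ large enough that $\a(k)$ is non-redundant and no coordinate of $\a(k)$ is $*_k$, then take $n_k > k$ from the asymptotic omission hypothesis. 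Since $\a(n_k)$ projects onto $\a(k)$ via $i_{n_k,k}$, which is injective on the (non-redundant, sink-avoiding) underlying set of $\a(n_k)$... here I need to be careful: asymptotic omission gives $\N_{n_k}\models (\forall\xx\in i_{n_k,k}^{-1}(\N_k\setminus *_k))\,\neg p_k(\xx)$, so applying this to $\a(n_k)$ (whose $i_{n_k,k}$-image has all coordinates in $\N_k\setminus *_k$) yields $\N_{n_k}\models \neg p_k(\a(n_k))$. (4) Push down to $\N_\w$: the qf $L_k$-type of $\a(m)$ is constant for all $m\ge k$ (as long as $\a(m)$ and its projection $\a(k)$ are both non-redundant, which holds past some point) by regularity condition (f), and the qf $L_k$-type of $\a$ in $\N_\w$ agrees with this common type by the definition of the limit structure (the $\N_\w$-relations are the eventual values of the $\N_n$-relations). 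Since $p_k$ is an $L_k$-type, $\N_{n_k}\models\neg p_k(\a(n_k))$ transfers to $\N_\w\models\neg p_k(\a)$ (using approximability to handle the coherence, or just condition (f) directly since we're within one fixed layering). (5) Conclude: $\F_{\N_\w}(\AA)\models \bigvee_{j\in\w}\neg p_j(\bar n)$ almost surely, hence $\mu_{(\N_\w,m_\w)}(\llrr{\bigvee_{j}\neg p_j(\bar n)}) = 1$, and intersecting over all $\bar n$ gives the claim.

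The main obstacle I expect is step (3)–(4), the bookkeeping around non-redundancy and the sink: I must verify that for $m_\w$-almost every tuple $\a$, there genuinely exists a level $k$ at which $\a(k)$ is simultaneously non-redundant in $\N_k$ \emph{and} avoids the sink in every coordinate, so that the asymptotic omission hypothesis (which only quantifies over $\xx\in i_{n_k,k}^{-1}(\N_k\setminus *_k)$) actually applies. Non-redundancy of $\a(k)$ at large $k$ follows from continuity of $m_\w$: if $a_{n_\ell}(k) = a_{n_{\ell'}}(k)$ for all $k$ then $a_{n_\ell} = a_{n_{\ell'}}$, a $\mu_\w^2$-null event by continuity; and once $a_{n_\ell}(k)\ne a_{n_{\ell'}}(k)$ it stays so by the directed-system commutation. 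Sink-avoidance follows from condition (c): $m_\w(*_\w) = 0$ (as noted after the definition of regular layering), so almost surely no $a_{n_\ell}$ equals $*_\w$, hence for each $\ell$ there is $k_\ell$ with $a_{n_\ell}(k)\ne *_k$ for $k\ge k_\ell$; taking $k$ above $\max_\ell k_\ell$ and above the non-redundancy threshold does it. The rest is then a matter of carefully invoking \cref{fullhom-determines-muPm} or directly the structure of $\F_{\N_\w}$ to translate between the measure statement and the sampling statement, and invoking regularity condition (f) to transport the qf-type down the tower; none of this should present genuine difficulty once the ``good level $k$ exists a.s.'' lemma is nailed down.
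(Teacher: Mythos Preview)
Your proposal is correct and follows essentially the same approach as the paper: both arguments show that a sampled tuple is, with probability tending to $1$, non-redundant and sink-avoiding at level $j$ (the paper packages this as the bound $\alpha_j \to 0$, you phrase it as ``for a.e.\ tuple there exists a good level $k$''), then invoke asymptotic omission at level $n_j$ (resp.\ $n_k$) and transfer $\neg p_j$ back to $\N_\omega$ via regularity. The only stylistic difference is that the paper bounds $\Pr[\N_\omega \models p_j(\a)] \le \alpha_j$ uniformly in $j$ and takes the infimum, whereas you fix the sample first and then select a witnessing index $k$; the underlying mechanism is identical.
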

\begin{proof}
	Let $\x$ be the variables of $\{p_j\}_{j\in\w}$
	and let $k = |\xx|$.
	For $j\in\w$, define $\alpha_j \defas m_j(*_j) + (1 - \beta_j)$,  where $\beta_j$ denotes the probability that sampling $k$ elements independently from $m_j$ yields a non-redundant tuple.
Note that $\alpha_j$
is an upper bound on the probability that 
the sink $*_j$ is selected at least once or that some element is selected at least twice.
Because $\hat{\N}$ is regular, $\lim_j m_j(*_j) = 0$. Because $\hat{\N}$ is continuous, $\lim_j \beta_j = 1$. Hence $\lim_j \alpha_j = 0$.

Note that
if $\ell \ge j$ and $\a$ is a non-redundant tuple of $\N_\ell\setminus *_\ell$ that
satisfies $p_j$, then 
	if $i_{\ell, j}(\a)$
	is also non-redundant it satisfies $p_j$.
	Hence, by the asymptotic omission of $\{p_j\}_{j\in\w}$, if $g \ge n_j$ or $g = \omega$, then
	the probability that a sample from the $k$-fold product measure $m_g^k$ 
	satisfies $p_j$
	is at most
$\alpha_j$. 

Therefore the probability that a sample from the $k$-fold product measure $m_\w^k$ satisfies
$(\exists \xx) \bigwedge_{j\in\w} p_j(\xx)$
is at most $\inf_j \alpha_j = 0$. 
Hence a sample from $m_\w^k$ almost surely satisfies 
$(\forall \xx) \bigvee_{j\in\w} \neg p_j(\xx)$, and so
\[
	\mu_{(\N_\w, m_\w)}\Bigl( \Bigllrr{(\forall \xx) \bigvee_{j\in\w} \neg p_j(\xx)}\Bigr)  = 1,
\]
as desired.
\end{proof}

We now consider two special classes of approximable types that can be asymptotically omitted.
Asymptotically omitting an approximable type of the first class in a 
continuous regular layering $(\N, m, i)$
ensures that the limit measure $\mu_{(\N_\w, m_\omega)}$ satisfies a given 
pithy $\Pi_2$-theory. 

\begin{definition}
	Let $\hat{\N} = (\N, m, i)$
	be a continuous regular $L$-layering. Suppose that
	$(\forall \xx)(\exists y) \varphi(\xx, y) \in \Lwow(L)$ is a pithy $\Pi_2$ sentence such that there exists some $\ell\in\w$ for which $(\forall \xx)(\exists y) \varphi(\xx, y) \in \Lwow(L_\ell)$.
	Consider the approximable type $\{p_j\}_{j\in\w}$ where 
	$p_j(\xx) = (\xx=\xx)$ for $j< \ell$ and $p_j(\xx) = (\forall y) \neg \varphi(\xx, y)$ for $j \ge \ell$.
	We say that $\hat{\N}$ 
	\defn{asymptotically satisfies}
	$(\forall \xx)(\exists y) \varphi(\xx,y)$ 
	when $\hat{\N}$ asymptotically omits $\{p_j\}_{j\in\w}$.	 (Note that this does not depend on the choice of $\ell$.)

	We say that $\hat{\N}$ asymptotically satisfies a pithy $\Pi_2$-theory $T$ of $\Lwow(L)$ when 
	it asymptotically satisfies every sentence of $T$.
\end{definition}

The following corollary is immediate from \cref{Lemma:approximable-type-omission}.

\begin{corollary}
\label{Cor:asymptotically satisfying a theory}
	Let $\hat{\N} = (\N, m, i)$
	be a continuous regular $L$-layering, and suppose 
	$T$ is a pithy $\Pi_2$-theory of $\Lwow(L)$. If $\hat{\N}$ asymptotically satisfies $T$ then $\mu_{(\N_\w, m_\w)}(\llrr{T}) = 1$, i.e., $\mu_{(\N_\w, m_\w)}$ is concentrated on $T$.
\end{corollary}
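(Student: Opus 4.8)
The statement to prove is \cref{Cor:asymptotically satisfying a theory}, which asserts: if $\hat{\N} = (\N, m, i)$ is a continuous regular $L$-layering and $T$ is a pithy $\Pi_2$-theory of $\Lwow(L)$ that $\hat{\N}$ asymptotically satisfies, then $\mu_{(\N_\w, m_\w)}(\llrr{T}) = 1$.

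The plan is to reduce this to \cref{Lemma:approximable-type-omission} sentence by sentence, then take a countable intersection. First I would fix an arbitrary sentence $(\forall \xx)(\exists y)\varphi(\xx,y)$ of $T$. Since $\bigcup_{j\in\w} L_j = L$ and the sentence uses only finitely many relation symbols, there is some $\ell\in\w$ with $(\forall\xx)(\exists y)\varphi(\xx,y)\in\Lwow(L_\ell)$. By the definition of asymptotically satisfying, $\hat{\N}$ asymptotically omits the approximable type $\{p_j\}_{j\in\w}$ given by $p_j(\xx) = (\xx = \xx)$ for $j < \ell$ and $p_j(\xx) = (\forall y)\neg\varphi(\xx,y)$ for $j\ge\ell$. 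Applying \cref{Lemma:approximable-type-omission} to this type yields
\[
	\mu_{(\N_\w, m_\w)}\Bigl( \Bigllrr{(\forall \xx) \bigvee_{j\in\w} \neg p_j(\xx)}\Bigr) = 1.
\]
Now the key observation is that $\bigvee_{j\in\w}\neg p_j(\xx)$ is logically equivalent to $\neg p_\ell(\xx)$, i.e.\ to $(\exists y)\varphi(\xx,y)$: for $j < \ell$ the disjunct $\neg p_j(\xx) = \neg(\xx=\xx)$ is unsatisfiable and contributes nothing, while for $j \ge \ell$ every disjunct $\neg p_j(\xx)$ is literally $(\exists y)\varphi(\xx,y)$. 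Hence the displayed set is exactly $\llrr{(\forall\xx)(\exists y)\varphi(\xx,y)}$, so $\mu_{(\N_\w, m_\w)}$ assigns it measure $1$.

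Finally, since $T$ is a countable pithy $\Pi_2$-theory, $\llrr{T} = \bigcap_{\chi\in T}\llrr{\chi}$ is a countable intersection of measure-one sets, each of which has measure $1$ by the argument above (applied to $\chi = (\forall\xx)(\exists y)\varphi(\xx,y)$). By countable additivity (equivalently, continuity of the measure from above), $\mu_{(\N_\w, m_\w)}(\llrr{T}) = 1$, so $\mu_{(\N_\w, m_\w)}$ is concentrated on $T$.

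This is essentially immediate from the lemma, as the paper itself indicates by calling it a corollary; there is no real obstacle. The only point requiring a moment's care is the logical equivalence between $\bigvee_{j\in\w}\neg p_j(\xx)$ and $(\exists y)\varphi(\xx,y)$, which relies on the specific shape of the $p_j$ in the definition of asymptotically satisfying — and in particular on the fact that the choice of $\ell$ does not matter, already noted parenthetically in that definition.
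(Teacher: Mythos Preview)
Your argument is correct and is exactly the unpacking the paper has in mind (the paper simply declares the corollary immediate from \cref{Lemma:approximable-type-omission}). One small correction: a pithy $\Pi_2$ sentence of $\Lwow(L)$ need not use only finitely many relation symbols, since the quantifier-free matrix may contain countable conjunctions; but you do not actually need that claim, because the existence of $\ell$ with $(\forall\xx)(\exists y)\varphi(\xx,y)\in\Lwow(L_\ell)$ is already built into the definition of ``$\hat{\N}$ asymptotically satisfies $(\forall\xx)(\exists y)\varphi(\xx,y)$'' and hence comes for free from the hypothesis.
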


Asymptotically omitting an approximable type of the second class results in 
a limit measure $\mu_{(\N_\w, m_\omega)}$ that omits a given qf $L$-type.

\begin{definition}
	Let $\hat{\N} = (\N, m, i)$ be
	a continuous regular $L$-layering, and let $p$ be a qf $L$-type.
	We say that $\hat{\N}$ \defn{asymptotically omits} $p$ 
	when $\hat{\N}$ asymptotically omits  the approximable type
	$\Bigl\{ \bigwedge  \bigl(p  \cap  \Lww(L_j)\bigr)\Bigr\}_{j\in\w}$.
\end{definition}

Again the following corollary is immediate from \cref{Lemma:approximable-type-omission}.

\begin{corollary}
\label{Cor:asymptotically omitting types}
	Let $\hat{\N} = (\N, m, i)$
	be a continuous regular $L$-layering, and let $p(\xx)$ be a qf $L$-type. 
	If $\hat{\N}$ asymptotically omits $p$ then $\mu_{(\N_\w, m_\w)}(\llrr{ (\forall \xx) \neg p(\xx)}) = 1$,
	i.e., $\mu_{(\N_\w, m_\w)}$ omits the singleton set of types $\{p\}$.
\end{corollary}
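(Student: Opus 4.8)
The plan is to unwind the definition of asymptotic omission of a qf-type and then invoke \cref{Lemma:approximable-type-omission}. By definition, $\hat{\N}$ asymptotically omits the qf $L$-type $p(\xx)$ precisely when it asymptotically omits the sequence $\{p_j\}_{j\in\w}$ given by $p_j(\xx) \defas \bigwedge\bigl(p(\xx) \cap \Lww(L_j)\bigr)$. So the first thing I would do is confirm that this sequence really is an approximable $\{L_j\}_{j\in\w}$-type, so that \cref{Lemma:approximable-type-omission} applies. Monotonicity, i.e.\ $\models p_k \to p_j$ for $j < k$, is immediate from $L_j \subseteq L_k$, which gives $p \cap \Lww(L_j) \subseteq p \cap \Lww(L_k)$. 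The coherence condition under surjective embeddings is also routine: if $i_{k,j}\colon \M_k \to \M_j$ is a surjective embedding (in particular injective), and $\a\in\M_k$ satisfies $p_k$, then because embeddings preserve all quantifier-free formulas and every formula of $p_j$ already lies in $p_k$, the tuple $i_{k,j}(\a)$ satisfies $p_j$. (The hypothesis in the definition of approximable type that $i_{k,j}$ be injective on the underlying set of $\a$ is automatic here, since embeddings are injective.)

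Having checked that $\{p_j\}_{j\in\w}$ is an approximable type which $\hat{\N}$ asymptotically omits, \cref{Lemma:approximable-type-omission} yields
\[
	\mu_{(\N_\w, m_\w)}\Bigl(\Bigllrr{(\forall \xx) \bigvee_{j\in\w} \neg p_j(\xx)}\Bigr) = 1.
\]
The remaining step is purely bookkeeping with the languages $L_j$: since $\bigcup_{j\in\w} L_j = L$ (part of the definition of a regular layering), every formula of the qf $L$-type $p$ lies in $\Lww(L_j)$ for some $j$, so $p = \bigcup_{j\in\w}\bigl(p \cap \Lww(L_j)\bigr)$. Consequently a tuple $\a$ satisfies $p$ if and only if it satisfies $p_j$ for every $j\in\w$; equivalently, $\a$ fails to satisfy $p$ if and only if $\bigvee_{j\in\w}\neg p_j(\a)$ holds. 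Hence $\llrr{(\forall \xx)\neg p(\xx)} = \llrr{(\forall \xx) \bigvee_{j\in\w} \neg p_j(\xx)}$ (both sets are Borel, e.g.\ by \cite[Proposition~16.7]{MR1321597}), and the displayed equation becomes $\mu_{(\N_\w, m_\w)}(\llrr{(\forall \xx)\neg p(\xx)}) = 1$, which says exactly that $\mu_{(\N_\w, m_\w)}$ omits the singleton set of types $\{p\}$.

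There is no substantive obstacle here; the corollary genuinely is ``immediate'' from \cref{Lemma:approximable-type-omission}. The only points requiring a moment's care are verifying the coherence clause in the definition of an approximable type (which reduces to the standard fact that embeddings preserve quantifier-free formulas) and the identification of $\bigvee_{j\in\w}\neg p_j$ with $\neg p$ via $\bigcup_{j\in\w} L_j = L$; both are routine.
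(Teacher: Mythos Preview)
Your proposal is correct and matches the paper's approach: the paper offers no proof beyond the remark that the corollary ``is immediate from \cref{Lemma:approximable-type-omission}'', and what you have written is precisely the routine unpacking of that claim --- checking that $\bigl\{\bigwedge(p\cap\Lww(L_j))\bigr\}_{j\in\w}$ is an approximable type and then identifying $\bigvee_{j\in\w}\neg p_j$ with $\neg p$ via $\bigcup_{j\in\w} L_j = L$.
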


\subsection{Layer Transformation}
\label{transformation}

We have reduced the problem of constructing an ergodic structure almost surely satisfying a theory to that of constructing a regular continuous layering which asymptotically omits a countable collection of types and asymptotically satisfies a pithy $\Pi_2$-theory. 
Now we give a general method for constructing  such layerings.

We do this by isolating a relationship between successive levels of a layering, such that when it occurs infinitely often along pairs of consecutive levels of the layering, 
the limit measure has the desired properties.

An \emph{approximation} consists of the data needed to be a level of a regular $L$-layering. 

\begin{definition}
	Let $\M$ be a non-redundant $L$-structure.
An \defn{approximation} to $\M$ is a triple $(L_+, \N_+, m_+)$ such that
\begin{itemize}
\item $L_+ \subseteq L$;

\item the underlying set $N_+$ of $\N_+$ satisfies $N_+ = M_+ \cup *_+$, where $M_+ \subseteq M$, the underlying set of $\M$;

\item 
	$\M\models R(\a)$ if and only if $\N_+ \models R(\a)$
	for any tuple $\a\in M_+$ and any relation symbol $R\in L_+$ of arity $|\a|$;

\item all relations in $L$ hold of any tuple in $\N_+$ containing $*_n$; 

\item $\N_+ \models R(\a)$
	for all tuples $\a\in \N_+$ and all $R \in L \setminus L_+$ of arity $|\a|$; and
\item $m_+$ is a measure on $N_+$.
\end{itemize} 
\end{definition}

A layer transformation is something which takes an approximation to $\M$, i.e., something which could be a level of a regular layering, and returns a new approximation along with a map to the old one. 

\begin{definition}
	Let $\M$ be a non-redundant $L$-structure.
A \defn{layer transformation} is a function $f$ which takes an 
approximation
$(L_+, \N_+, m_+)$ 
of $\M$
to a pair $(\iota_\dagger, (L_\dagger, \N_\dagger, m_\dagger))$ such that 
\begin{itemize}
\item $(L_\dagger, \N_\dagger, m_\dagger)$ is an approximation to $\M$;
\item $\N_+$ has sink $*_+$ and $\N_\dagger$ has sink $*_\dagger$;

\item $L_+ \subseteq L_\dagger$;

\item $\iota_\dagger\colon \N_\dagger \to \N_+$ is a surjective homomorphism and $i_{\dagger}(*_\dagger) = *_+$;

\item for all $\a\in\N_\dagger$, if
			$i_{\dagger}(\a)$ is a non-redundant tuple in $\N_+$, then the qf $L_+$-type of $\a$ is the same as that of $i_{\dagger}(\a)$; and

\item for all $a \in \N_+$, we have $m_\dagger(i_{\dagger}^{-1}(\{a\})) = m_+(\{a\})$. 
\end{itemize}
\end{definition}

We will show that properties of layer transformation transfer to properties of limit measures of continuous regular layerings which have the layer transformations as consecutive pairs of layers.

\begin{definition}
We say that a layer transformation \defn{forces} a property $P$ if whenever $(\N, m, i)$ is a $L$-layering such that for infinitely many $n \in \Nats$, the pair $(i_{n+1, n}, (L_{n+1}, \N_{n+1}, m_{n+1}))$ is the image of $(L_n, \N_n, m_n)$ under the transformation, then $(\N, m, i)$ has property $P$.

We say that a layer transformation \defn{continuously forces} a property $P$ if whenever $(\N, m, i)$ is a 
continuous regular 
$L$-layering such that for infinitely many $n \in \Nats$, the pair $(i_{n+1, n}, (L_{n+1}, \N_{n+1}, m_{n+1}))$ is the image of $(L_n, \N_n, m_n)$ under the transformation, then $(\N, m, i)$ has property $P$.
\end{definition}

Let $\M$ be a non-redundant $L$-structure.
We now introduce properties of layer transformations in terms of $\M$ which, when interleaved in a layering, will force the layering to be continuous, regular, and concentrated on a specified theory.

We first consider two properties which force a layering to be continuous and regular.

The \emph{scrapwork transformation} simply adds a new element to the structure and gives it half the mass of the sink of the previous layer. 

\begin{definition}
We define a \defn{scrapwork transformation}
to be any layer transformation which takes an approximation $(L_+, \N_+, m_+)$ and returns 
a pair $(\iota_\dagger, (L_\dagger, \N_\dagger, m_\dagger))$ such that 
\begin{itemize}

\item $\iota_\dagger$ is the identity on $N_+\setminus *_+$ and takes the element $a$ to $*_\dagger$; 
\item $L_\dagger = L_+$;
\item $N_\dagger = N_+ \cup \{a\}$ for some $a \in \M$; and

\item $m_\dagger(\{b\}) = m_+(\{b\})$ for any $b \in N_+\setminus *_+$ and $m_\dagger(a) = m_\dagger(*_\dagger) = \frac{1}{2} m_+(*_+)$.
\end{itemize}
\end{definition}

The following is then immediate. 

\begin{lemma}
\label{scrapwork transformation forces regular layering}
Any scrapwork transformation forces $m(*_\omega) =0$ and hence forces a layering to be regular. 
\end{lemma}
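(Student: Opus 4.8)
The plan is to unwind the definitions and show that a scrapwork transformation, applied infinitely often, drives the sink mass to zero, which by condition (c) of regularity is the only nontrivial requirement. First I would observe that the other regularity conditions are preserved more or less for free: a scrapwork transformation sets $L_\dagger = L_+$ (so (d), (e) follow from the union hypothesis on the $L_n$'s being arranged elsewhere), it only adds a single new element $a$ with $\iota_\dagger$ the identity on $N_+ \setminus *_+$ and $a \mapsto *_\dagger$, so it makes no new relational decisions and the qf-type preservation conditions (f), (g) and the sink conditions (a), (b) pass through from the underlying approximation structure. Hence the content is entirely in (c), namely $\lim_n m_n(*_n) = 0$, equivalently $m_\w(*_\omega) = 0$ where $*_\omega = \{*_j\}_{j\in\w}$.

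Next I would track what a scrapwork step does to the sink mass: if $(i_{n+1,n}, (L_{n+1}, \N_{n+1}, m_{n+1}))$ is the image of $(L_n, \N_n, m_n)$ under a scrapwork transformation, then by definition $m_{n+1}(*_{n+1}) = \tfrac12 m_n(*_n)$. So each scrapwork step exactly halves the current sink mass. For the non-scrapwork steps, the only thing I need is that the sink mass never increases; this follows because every layer transformation satisfies $m_\dagger(\iota_\dagger^{-1}(\{*_+\})) = m_+(\{*_+\})$ and $\iota_\dagger(*_\dagger) = *_+$, so $m_\dagger(*_\dagger) \le m_\dagger(\iota_\dagger^{-1}(*_+)) = m_+(*_+)$; in a layering the maps $i_{n+1,n}$ are measure-preserving with $i_{n+1,n}(*_{n+1}) = *_n$, giving $m_{n+1}(*_{n+1}) \le m_{n+1}(i_{n+1,n}^{-1}(*_n)) = m_n(*_n)$. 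Thus $\{m_n(*_n)\}_{n\in\w}$ is nonincreasing, and it is multiplied by $\tfrac12$ at each of the infinitely many scrapwork levels, so $\inf_n m_n(*_n) = 0$. By the definition of the limit measure, $m_\w(*_\omega) = \inf_j m_j(*_j) = 0$, which is condition (c); combined with the observations above, $(\N, m, i)$ is regular. This establishes that the scrapwork transformation forces $m(*_\omega) = 0$ and hence forces a layering to be regular.

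The main obstacle, if any, is purely bookkeeping: one must be careful that the statement quantifies over layerings in which the scrapwork transformation appears as \emph{infinitely many} consecutive pairs, and confirm that between those occurrences nothing pushes sink mass back up — which is exactly the monotonicity argument above, valid for \emph{arbitrary} layer transformations by the measure-preservation clause in the definition. I would also note, since the lemma is phrased as ``immediate,'' that the proof can be presented in two or three sentences in the paper: halving at infinitely many levels plus monotonicity in between forces the infimum to $0$, i.e.\ $m_\w(*_\omega)=0$, which is the one regularity condition not automatic from the form of a scrapwork transformation.
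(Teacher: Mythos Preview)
Your proof is correct and follows the same core idea as the paper: each scrapwork step halves the sink mass, so infinitely many such steps force $m_\w(*_\omega)=0$. The paper's proof is a two-sentence version of exactly this, omitting both the monotonicity argument between scrapwork levels and the discussion of the other regularity conditions; your write-up is simply more careful on both counts, and your observation that measure-preservation of the maps gives $m_{n+1}(*_{n+1}) \le m_n(*_n)$ at non-scrapwork levels fills a small gap the paper leaves implicit.
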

\begin{proof}
	First note that 
	$m_\dagger(*_\dagger) = \frac12 m_+(*_+)$, and so each application of the scrapwork transformation reduces then measure of the sink by one half. In particular, if there are infinitely many such applications, then $m_\omega(*_\omega) = 0$.
\end{proof}

A \emph{splitting transformation} splits each element into two new elements that each stand in the same relationship to all other elements as the original element did.

\begin{definition}
Suppose that for every quantifier-free formula $\psi$,
	the $\Lwow(L)$-theory of $\M$ has trivial $\psi$-definable closure.
Fix an enumeration $b_1, \ldots, b_k$ of $N_+ \setminus *_+$ and let $q$ be the quantifier free $L_+$-type of $b_1, \ldots, b_k$. By \cref{multiple-duplication} there are tuples $a_1^0\cdots a_k^0$  and $a_1^1\cdots a_k^1$ containing $2k$-many distinct elements of $\M$ such that $\M \models q(a_1^{\alpha(1)}, \dots, a_k^{\alpha(k)})$ for any function $\alpha \colon \{1, \dots, k\} \rightarrow \{0, 1\}$. 

We define a \defn{splitting transformation}
to be any layer transformation which takes an approximation $(L_+, \N_+, m_+)$ and returns 
a pair $(\iota_\dagger, (L_\dagger, \N_\dagger, m_\dagger))$  such that
\begin{itemize}
	\item $i_{\dagger}(a^j_\ell) = b_\ell$;
	\item $L_\dagger = L_+$;
	\item 
$N_\dagger = \{ a^j_\ell \st j \in \{0, 1\} \text{~and~} 1\le \ell \le k\} \cup *_\dagger$; and
	\item $m_{\dagger}(a^j_\ell) = m_{+}(b_\ell)/2$ for $j\in\{0, 1\}$ and $1 \le \ell \le k$ and let $m_\dagger(*_\dagger) = m_+(*_+)$
\end{itemize}
for some choice of elements $a^i_j$ and $b_j$ as above.
\end{definition}

\begin{lemma}
\label{splitting transformation implies continuous}
Any splitting transformation forces the measure $m_\omega$ to be continuous outside of the sink $*_\omega$. 
\end{lemma}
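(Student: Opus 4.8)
The plan is to combine the explicit formula for the limit measure of a layering on singletons with the fact that a splitting transformation exactly halves the mass of every non‑sink element, so that after infinitely many splittings the mass of any non‑sink element of the limit must drop to zero.

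First I would record the singleton formula, exactly as in the proof of \cref{Lemma: continuous layering}. For any $\{a_j\}_{j\in\w}\in\N_\w$, the Borel singleton $\bigl\{\{a_j\}_{j\in\w}\bigr\}$ is the decreasing intersection $\bigcap_{n\in\w} i_{\w,n}^{-1}(a_n)$, and $m_\w\bigl(i_{\w,n}^{-1}(a_n)\bigr) = m_n(a_n)$ by definition of $m_\w$. Since $a_n\in i_{n,k}^{-1}(a_k)$ and $i_{n,k}$ is measure‑preserving for $k\le n$, the sequence $\{m_j(a_j)\}_{j\in\w}$ is non‑increasing, and hence by continuity of $m_\w$ from above,
\[
	m_\w\bigl(\{a_j\}_{j\in\w}\bigr) = \inf_{j\in\w} m_j(a_j).
\]

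Next, let $(\N, m, i)$ be an $L$-layering for which there are infinitely many $n\in\Nats$ such that $(i_{n+1,n},(L_{n+1},\N_{n+1},m_{n+1}))$ is the image of $(L_n,\N_n,m_n)$ under a splitting transformation; call these the splitting levels, and recall that the connecting maps preserve sinks, so $i_{n,k}(*_n)=*_k$ for $k\le n$. Fix $\{a_j\}_{j\in\w}\in\N_\w$. For any splitting level $n$, inspection of the splitting transformation gives a dichotomy: either $a_{n+1}\neq *_{n+1}$, in which case $i_{n+1,n}(a_{n+1})\in N_n\setminus *_n$, so $a_n\neq *_n$ and the mass rule yields $m_{n+1}(a_{n+1}) = \tfrac12\, m_n(a_n)$; or else $a_{n+1} = *_{n+1}$, in which case $a_n = i_{n+1,n}(*_{n+1}) = *_n$. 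If the first alternative holds for infinitely many splitting levels, then, combined with the monotonicity of $\{m_j(a_j)\}_{j\in\w}$ from the previous paragraph, $\inf_{j\in\w} m_j(a_j) = 0$, hence $m_\w(\{a_j\}_{j\in\w}) = 0$ by the singleton formula. Otherwise the second alternative holds for cofinally many $n$, and then $a_j = i_{n,j}(a_n) = i_{n,j}(*_n) = *_j$ for every $j$ (taking such $n\ge j$), so $\{a_j\}_{j\in\w} = *_\w$. Either way, every singleton of $\N_\w$ other than $\{*_\w\}$ is $m_\w$-null, i.e., $m_\w$ is continuous outside the sink $*_\w$.

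I do not anticipate a real obstacle: the content is bookkeeping. The only subtlety is ensuring that the splitting transformation's mass rule (which halves non‑sink masses) applies rather than the sink's mass‑preservation rule; the dichotomy above handles this cleanly, using that under a splitting transformation the sole $\iota_\dagger$-preimage of the sink is the sink itself, so a non‑sink element at level $n+1$ projects to a non‑sink element at level $n$.
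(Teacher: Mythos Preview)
Your proof is correct and follows essentially the same idea as the paper's: non-sink masses halve at each splitting level, forcing the limit measure of any non-sink singleton to vanish. The paper's argument is terser, tracking $X_n \defas \sup\{m_n(a) \st a \in \N_n \setminus *_n\}$ and noting $X_{n+1} = \frac{1}{2} X_n$ at splitting levels rather than following an individual sequence $\{a_j\}$ via your dichotomy, but the content is the same.
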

\begin{proof}
	Let $X_n = \sup\{m_n(\{a\}) \st a \in \N_n \setminus *_n\}$. 
If the relationship between levels $n$ and $n+1$ is a splitting transformation, then $X_{n+1}  = \frac{1}{2} X_n$. 
Therefore $\lim_{n \to \infty} X_n = 0$,
and so the splitting transformation forces $m_\omega$ to be continuous on $\N_\omega \setminus *_\omega$.
\end{proof}

Sampling from any structure with respect to a measure having a point mass will yield a random structure that almost surely has an indiscernible set. As such, if we hope to find a measure concentrated on structures or theories without an indiscernible set, then we need to be able to sample continuous measures --- which is why we need our theory to have trivial $\psi$-definable closure for some $\psi$. 

In particular, if a layering has cofinally many splitting and scrapwork transformations, then it must be a continuous regular transformation.

We now describe a layering transformation that will ensure that the limit measure satisfies a given pithy $\Pi_2$ sentence.

\begin{definition}
	Let $(\forall \x)(\exists y)\varphi(\x, y)$ be a pithy $\Pi_2$ sentence, and suppose
$\M \models (\forall \x)(\exists y)\varphi(\x, y)$.
Define 
a \defn{$(\forall \x)(\exists y)\varphi(\x, y)$-satisfaction transformation}
to be any layer transformation which takes an approximation $(L_+, \N_+, m_+)$ and returns 
$(\iota_\dagger, (L_\dagger, \N_\dagger, m_\dagger))$ such that 
\begin{itemize}
\item $\iota_\dagger$ is the identity on $N_+\setminus *_+$ and takes $N_\dagger \setminus N_+$ to $*_\dagger$;  
\item $L_\dagger = L_+$; 

\item 
$\N_\dagger$ is such that
	for every  $\a \in \N_+\setminus *_+$ there is some $b \in \N_\dagger$ such that $\M \models \varphi(\a, b)$; and

\item $m_\dagger(b) = m_+(b)$ for any $b \in N_+\setminus *_+$, and $m_\dagger(a) > 0$ otherwise.
\end{itemize}
\end{definition}

Note that we can always find such an $\N_\dagger$ as $\M\models (\forall \x)(\exists y)\varphi(\x,y)$.

\begin{lemma}
\label{satisfaction transformation forces asymptotically satisfied}
Any $(\forall \x)(\exists y) \varphi(\x, y)$-satisfaction transformation continuously forces the layering to asymptotically satisfy $(\forall \x)(\exists y)\varphi(\x,y)$. 
\end{lemma}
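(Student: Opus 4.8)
The plan is to unwind the definitions and invoke \cref{Cor:asymptotically satisfying a theory}. So suppose $(\N, m, i)$ is a continuous regular $L$-layering such that, for infinitely many $n \in \Nats$, the pair $(i_{n+1,n}, (L_{n+1}, \N_{n+1}, m_{n+1}))$ is the image of $(L_n, \N_n, m_n)$ under a fixed $(\forall\x)(\exists y)\varphi(\x,y)$-satisfaction transformation. We must show that $(\N, m, i)$ asymptotically satisfies $(\forall\x)(\exists y)\varphi(\x,y)$, i.e., that it asymptotically omits the approximable type $\{p_j\}_{j\in\w}$ with $p_j(\x) = (\x = \x)$ for $j < \ell$ and $p_j(\x) = (\forall y)\neg\varphi(\x,y)$ for $j \ge \ell$, where $\ell$ is chosen so that $(\forall\x)(\exists y)\varphi(\x,y) \in \Lwow(L_\ell)$. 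Unwinding \emph{asymptotic omission}, we must show: for every $k \in \w$ there is $n_k > k$ with
\[
	\N_{n_k} \models \bigl(\forall \x \in i_{n_k, k}^{-1}(\N_k \setminus *_k)\bigr)\, \neg p_k(\x).
\]

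First I would dispose of the trivial case $k < \ell$: here $p_k(\x) = (\x = \x)$, so the displayed formula says $i_{n_k,k}^{-1}(\N_k \setminus *_k)$ is empty for the relevant arity, which we cannot expect; but note that when $k < \ell$ one may simply increase $k$, since the condition for $k$ follows from the condition for any $k' \ge \max(k,\ell)$ by composing the maps $i_{n_{k'}, k'}$ and $i_{k', k}$ and using that $\models p_{k'} \to p_k$ together with condition (f) of regularity (for non-redundant tuples whose image is non-redundant) — actually the cleanest route is to observe that it suffices to verify asymptotic omission for all $k \ge \ell$, since $p_j$ is constant ($=(\x=\x)$, the trivially satisfied type) for $j < \ell$ and asymptotic omission at a level $\ge \ell$ implies it at all lower levels via the projection maps. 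So fix $k \ge \ell$. Since there are infinitely many levels at which the satisfaction transformation is applied, choose one such level $n$ with $n \ge k$, and set $n_k \defas n + 1$. I claim $\N_{n_k} \models (\forall \x \in i_{n_k,k}^{-1}(\N_k \setminus *_k))\,\neg p_k(\x)$, equivalently, every tuple $\a \in \N_{n_k}$ whose image under $i_{n_k,k}$ avoids $*_k$ satisfies $(\exists y)\varphi(\a, y)$ in $\N_{n_k}$.

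The key step — and the main point of the argument — is this: by the definition of the satisfaction transformation applied between levels $n$ and $n+1$, for every $\a' \in \N_n \setminus *_n$ (i.e., every single element, hence coordinatewise for tuples with all coordinates outside $*_n$) there is $b \in \N_{n+1}$ with $\M \models \varphi(\a', b)$; but we need $\N_{n+1} \models \varphi(\a', b)$, which requires transferring satisfaction of the quantifier-free formula $\varphi$ from $\M$ to its approximation $\N_{n+1}$ — this is where I would use the approximation axiom ``$\M \models R(\a)$ iff $\N_\dagger \models R(\a)$ for $R \in L_\dagger$'' together with the fact that $\varphi \in \Lwow(L_\ell) \subseteq \Lwow(L_{n+1})$ (using $\ell \le k \le n < n+1$ and the nesting $L_\ell \subseteq L_{n+1}$), plus non-redundancy handling so that the quantifier-free $\varphi$, built from such $R$'s and equalities, evaluates the same way; the main obstacle is bookkeeping the redundant-versus-non-redundant cases and the sink, since a tuple $\a$ in $\N_{n_k}$ projecting away from $*_k$ at level $k$ could still have repeated coordinates or differ from a genuine $\M$-tuple. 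Then I would pull this back along $i_{n_k, k}$: given $\a \in \N_{n_k}$ with $i_{n_k,k}(\a)$ avoiding $*_k$, the coordinates of $\a$ avoid $*_{n_k}$ (since the homomorphisms send sink to sink, by condition (b) / the layer transformation axiom $i_\dagger(*_\dagger) = *_+$), so the witness $b$ exists in $\N_{n_k}$ and $\N_{n_k} \models \varphi(\a, b)$, giving $\N_{n_k} \models (\exists y)\varphi(\a, y)$, i.e., $\N_{n_k} \models \neg p_k(\a)$. This establishes asymptotic omission of $\{p_j\}_{j\in\w}$, hence asymptotic satisfaction of $(\forall\x)(\exists y)\varphi(\x,y)$; and by \cref{Cor:asymptotically satisfying a theory}, $\mu_{(\N_\w, m_\w)}$ is concentrated on $(\forall\x)(\exists y)\varphi(\x,y)$. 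Since this holds for every continuous regular layering with cofinally many such transformations, the satisfaction transformation continuously forces asymptotic satisfaction of $(\forall\x)(\exists y)\varphi(\x,y)$, as claimed.
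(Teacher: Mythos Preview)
Your proposal is correct and takes a genuinely different route from the paper's argument. You verify the literal definition of asymptotic satisfaction at the finite levels: for each $k\ge\ell$ you locate a level $n\ge k$ at which the transformation is applied, set $n_k=n+1$, and check that every tuple $\a\in\N_{n_k}$ whose projection to level $k$ avoids $*_k$ already lies in $(N_n\setminus *_n)^{|\x|}$ (since $i_{n+1,n}$ collapses everything outside $N_n\setminus *_n$ to $*_n$), whence the transformation supplies a witness $b$ with $\M\models\varphi(\a,b)$, and the approximation axioms transfer this to $\N_{n+1}$ because $\varphi$ is quantifier-free in $L_\ell\subseteq L_{n+1}$. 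The paper instead argues at the limit: it fixes $\a\in\N_\w$, descends to a level where the coordinates separate and a satisfaction transformation has fired, finds a witness there, lifts it back to some $b_\w\in\N_\w$, and concludes $\N_\w\models(\forall\x)(\exists y)\varphi(\x,y)$ --- strictly a statement about the limit structure rather than the finite-level asymptotic-omission condition named in the definition. Your approach hews closer to the stated conclusion and forces you to confront the $k<\ell$ padding issue you flag (an artifact of how the approximable type is filled with $(\x=\x)$ below $\ell$); the paper's route is shorter and sidesteps that bookkeeping entirely. Your closing appeal to \cref{Cor:asymptotically satisfying a theory} is superfluous here --- asymptotic satisfaction is already the conclusion of the lemma, and that corollary is invoked separately in \cref{layering-existence}.
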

\begin{proof}
	Suppose $(\N, m, i)$ is a continuous regular layering
	and that $\a \in \N_\w$. Let $n$ be such that $i_{\w, n}(a_i) \neq i_{\w, n}(a_j)$ for distinct $a_i, a_j \in \a$.
	Let $n'>n$ be a stage at which a $(\forall \x)(\exists y) \varphi(\x, y)$-satisfaction transformation occurs. There is therefore some $b \in \N_{n'} \setminus *_{n'}$ such that $\N_{n'}\models \varphi(i_{\w, n'}(a_0), \dots, i_{\w, n'}(a_{k-1}), b)$. 

Let $b_\w \in \N_\w$ be such that $i_{\w, n'}(b_\w) = b$. Then $\N_\w \models \varphi(\a, b_\w)$ and hence $\N_\w \models (\exists y)\varphi(\a,y)$. But as $\a$ was arbitrary, we have $\N_\w \models (\forall \x)(\exists y)\varphi(\x, y)$. 
\end{proof}

We now describe the final layering transformation, which will ensure that the limit measure almost surely omits a given type.

\begin{definition}
Let $(\forall \x)(\exists y)\varphi(\x, y)$ be a pithy $\Pi_2$ sentence, and
suppose $p(\x)$ is a qf-type omitted in $\M$.
Define a \defn{omitting-$p(\x)$ transformation}
to be any layer transformation which takes an approximation $(L_+, \N_+, m_+)$ and returns 
$(\iota_\dagger, (L_\dagger, \N_\dagger, m_\dagger))$ such that 
\begin{itemize}

	\item $\iota_\dagger = \id$; 

\item $L_\dagger$ is a language such that for all $\a \in \N_\dagger$ of the same arity as $p$, there is a qf-free $L_\dagger$-formula $\eta(\x)$ such that 
$\neg \eta(\x) \in p(\x)$ and
$\M \models \eta(\a)$;

\item $N_\dagger = N_+$

\item $m_\dagger = m_+$.
\end{itemize}
\end{definition}

Note that we can always find such a language, since $\M$ omits $p(\x)$.

\begin{lemma}
\label{omitting type transformation forces omitting types}
Any omitting-$p(\x)$ transformation continuously forces the layering to asymptotically omit $p(\x)$. 
\end{lemma}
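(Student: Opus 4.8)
The plan is to verify directly that $\hat{\N} = (\N, m, i)$ asymptotically omits the approximable $\{L_j\}_{j\in\w}$-type $\{q_j\}_{j\in\w}$, where $q_j := \bigwedge\bigl(p \cap \Lww(L_j)\bigr)$, since by definition this is exactly what it means for $\hat{\N}$ to asymptotically omit $p(\x)$. So I would fix $k \in \w$ and exhibit some $n_k > k$ with $\N_{n_k} \models \bigl(\forall \xx \in i_{n_k, k}^{-1}(\N_k \setminus *_k)\bigr)\, \neg q_k(\xx)$.

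First I would use the hypothesis that omitting-$p(\x)$ transformations occur along infinitely many consecutive pairs of levels to pick $m \ge k$ for which $(i_{m+1,m}, (L_{m+1}, \N_{m+1}, m_{m+1}))$ is the image of $(L_m, \N_m, m_m)$ under such a transformation, and set $n_k := m+1$. An omitting-$p(\x)$ transformation has $\iota_\dagger = \id$, $N_\dagger = N_+$ and $m_\dagger = m_+$, so $\N_{n_k}$ has the same carrier and measure as $\N_m$, and only its language has grown, to $L_{n_k}$; by the defining clause, $L_{n_k}$ has the property that for every non-sink tuple $\a$ of arity $\arity(p)$ present at that stage there is a quantifier-free $L_{n_k}$-formula $\eta_\a$ with $\neg\eta_\a \in p$ and $\M \models \eta_\a(\a)$.

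Next I would take an arbitrary tuple $\xx$ with $i_{n_k, k}(\xx) \in \N_k \setminus *_k$ (we may assume $\arity(\xx) = \arity(p)$, else there is nothing to check). Since $i_{n_k, k}$ is a composite of sink-preserving homomorphisms, $\xx$ avoids $*_{n_k}$, so it is a tuple of $M_{n_k} \subseteq M$; as $\N_{n_k}$ is an approximation to $\M$, it agrees with $\M$ on every quantifier-free $L_{n_k}$-formula evaluated at $\xx$. Applying the defining clause at stage $m$ to $\xx$ yields a quantifier-free $L_{n_k}$-formula $\eta$ with $\neg\eta \in p$ and $\M \models \eta(\xx)$, hence $\N_{n_k} \models \eta(\xx)$; so $\xx$ falsifies the conjunct $\neg\eta$ of $\bigwedge\bigl(p \cap \Lww(L_{n_k})\bigr)$.

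The step I expect to be the main obstacle is recognizing a falsified conjunct \emph{of $q_k$} rather than merely of $\bigwedge\bigl(p \cap \Lww(L_{n_k})\bigr)$, that is, arranging that $\eta$ can be taken inside the smaller language $\Lww(L_k)$. The plan is to exploit regularity: only omitting-$p(\x)$ transformations enlarge the language (scrapwork, splitting, and all satisfaction steps keep $L_\dagger = L_+$), so a judicious choice of $m$ controls $L_{n_k}$ relative to $L_k$; and conditions (d)--(g) of a regular layering, via the maps $i_{n_k, j}$, let me identify the quantifier-free $L_k$-type of $\xx$ in $\N_{n_k}$ with that of $i_{n_k, k}(\xx)$ in $\N_k$, which by the approximation property is realized by a tuple of $\M$, and every tuple of $\M$ refutes $p$. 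Carrying this through gives $\N_{n_k} \models \neg q_k(\xx)$ for all such $\xx$, hence asymptotic omission of $\{q_j\}_{j\in\w}$ and so of $p(\x)$. (An alternative, paralleling the proof of \cref{satisfaction transformation forces asymptotically satisfied}, is to show directly that every non-sink tuple of the limit $\N_\w$ falsifies a formula of $p$, pushing the witness up from a late omitting stage to the limit by condition (f); as $m_\w$ is continuous this already yields $\mu_{(\N_\w, m_\w)}\bigl(\llrr{(\forall\xx)\neg p(\xx)}\bigr)=1$.)
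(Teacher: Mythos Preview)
Your parenthetical alternative is exactly the route the paper takes. The paper's proof picks $\a \in \N_\w$, finds a level $n$ at which the projection of $\a$ is injective, then a stage $n'>n$ at which an omitting-$p(\x)$ transformation occurs; this supplies a quantifier-free $\eta$ with $\neg\eta\in p$ satisfied by the projection of $\a$ at level $n'+1$, and regularity (condition~(f)) pushes $\eta(\a)$ up to $\N_\w$. The conclusion drawn is that $\N_\w$ omits $p$. Note that, as with the companion proof for the satisfaction transformation, this does \emph{not} literally verify the asymptotic-omission condition on the approximable type $\{q_j\}_{j\in\w}$; it instead establishes directly, at the level of $\N_\w$, the conclusion that \cref{Cor:asymptotically omitting types} would have been invoked to deliver (and hence, as you observe, the measure-one statement for $\mu_{(\N_\w,m_\w)}$, via \cref{continuouslemma} and the fact that omission of a qf-type passes to substructures).

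Your primary route, verifying asymptotic omission of $\{q_j\}$ head-on, hits the obstacle you flag, and neither proposed fix closes it. First, ``only omitting-$p(\x)$ transformations enlarge the language'' is not a consequence of the hypothesis \emph{continuously forces}: the definition allows arbitrary behavior (consistent with being a continuous regular layering) at the other pairs of levels, so the language may grow elsewhere and you cannot control $L_{n_k}$ relative to $L_k$ just by choosing where the transformation fires. Second, the regularity argument stalls at its last step: you do reduce to a tuple realized in $\M$, and $\M$ omits $p$, but that only supplies a refuting formula somewhere in $L$, not one in $L_k$; there is no reason $\M$ should omit the partial type $q_k = p\cap\Lww(L_k)$. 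So the alternative is the argument to promote.
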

\begin{proof}
	Suppose $(\N, m, i)$ is a continuous regular layering
	and that $\a \in \N_\w$. Let $n$ be such that $i_{\w, n}(a_i) \neq i_{\w, n}(a_j)$ for distinct $a_i, a_j \in \a$. Let $n'>n$ be a stage at which the omitting-$p(\x)$ transformation occurs. Hence there is some quantifier-free formula $\eta(\x)$ such that $\neg \eta(\x) \in p(\x)$ and $\N_{n'+1} \models \eta(i_{\w, n'}(a_0), \dots, i_{\w, n'}(a_{k-1}))$. But then $\N_\w \models \eta(\a)$, and so $\N_\w \models \neg p(\a)$. However, because $\a$ was arbitrary, this means that $\N_\w$ omits $p(\x)$. 
\end{proof}

\section{Existence of ergodic structures}
\label{sec:existence}
Having developed
appropriate layer transformations, we may now show that for a relational language $L$, if a complete $A$-theory $T$ has trivial $A$-definable closure, then there is an ergodic model of $T$. 

We begin by using duplication of quantifier-free formulas to construct an appropriate layering.

\begin{proposition}
	\label{layering-existence}
	Let $L$ be a countable relational language, and $\M\in \Str_L$ be a non-redundant $L$-structure.
	Further let $T$ be a countable 
	pithy $\Pi_2$ theory, and $\Theta$ be a countable set of qf-types such that $\M \models T$ and $\M$ omits $\Theta$. Suppose that $T$ has duplication of quantifier-free formulas.
	Then there is a continuous regular layering $(\N, m, i)$ such that 
	$\mu_{(\N_\w, m_\w)}$ is an ergodic model of $T$ that
	omits $\Theta$.
\end{proposition}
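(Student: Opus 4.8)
The plan is to produce $(\N, m, i)$ by dovetailing the layer transformations developed in \cref{transformation}, starting from the trivial approximation $\N_0$ with underlying set $\{*_0\}$, $m_0(*_0) = 1$, and $L_0 = \emptyset$. First I would enumerate as a single countable list $(\tau_j)_{j\in\w}$ all of the following tasks: a scrapwork transformation; a splitting transformation; for each (pithy $\Pi_2$) sentence $(\forall\x)(\exists y)\varphi(\x,y)$ of $T$, the corresponding satisfaction transformation; for each qf-type $p\in\Theta$, the corresponding omitting-$p$ transformation; and, for each relation symbol in a fixed enumeration of $L$, a trivial step enlarging the current language by that symbol (deciding it on the existing real elements so as to agree with $\M$, and leaving structure and measure otherwise fixed). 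Fixing a surjection $v\colon\w\to\w$ that attains every value infinitely often, I would build $\N_{n+1}$ from $\N_n$ by applying some transformation of type $\tau_{v(n)}$, taking $i_{n+1,n}$ to be the accompanying map and $i_{n,k} = i_{k+1,k}\circ\cdots\circ i_{n,n-1}$ for $k<n$.

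Next I would check that at each stage the scheduled transformation can in fact be applied. Since $T$ has duplication of quantifier-free formulas, applying duplication to the formula asserting that its free variables are pairwise distinct and iterating shows that every model of $T$ is infinite; in particular $\M$ is infinite, so the scrapwork steps always have a fresh element of $\M$ available. The satisfaction transformations are available because $\M\models T$, and the omitting-$p$ transformations because $\M$ omits $\Theta$. For a splitting transformation at level $n$, the quantifier-free type $q$ of the real elements of $\N_n$ is realized in $\M$, hence consistent with $\Th(\M)$ and so with $T$, and it is non-redundant since those elements are distinct; so \cref{multiple-duplication} supplies the requisite $2k$ distinct elements of $\M$ on which to split. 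It then remains to verify that the resulting directed system is a continuous regular $L$-layering: surjectivity, being a homomorphism, and measure-preservation of the composed maps $i_{n,k}$ are each preserved under composition; condition~(f) follows because a redundant image under some $i_{n,k}$ forces every intermediate image to be redundant, so the qf $L_k$-type condition propagates; conditions~(a), (b), (g) are built into the notions of approximation and layer transformation; (c) is forced by the scrapwork steps; and (d), (e) hold because every transformation only enlarges the language and the language-enlarging steps exhaust $L$.

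Finally I would read off the conclusion from the forcing lemmas of \cref{transformation}. The cofinally many scrapwork steps force $m_\w(*_\omega)=0$ and regularity (\cref{scrapwork transformation forces regular layering}), and the cofinally many splitting steps force $m_\w$ to be continuous away from the sink (\cref{splitting transformation implies continuous}); together these make $(\N, m, i)$ a continuous regular layering whose limit measure $m_\w$ is continuous. Applying \cref{satisfaction transformation forces asymptotically satisfied} to each sentence of $T$ shows $(\N, m, i)$ asymptotically satisfies $T$, and \cref{omitting type transformation forces omitting types} applied to each $p\in\Theta$ shows it asymptotically omits each such $p$. By \cref{continuouslemma}, $\mu_{(\N_\w, m_\w)}$ is an ergodic structure; by \cref{Cor:asymptotically satisfying a theory} it is concentrated on $\llrr{T}$, hence is an ergodic model of $T$; and by \cref{Cor:asymptotically omitting types}, together with the countability of $\Theta$ and $\sigma$-additivity, it is concentrated on the class of $L$-structures omitting every type in $\Theta$. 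So $\mu_{(\N_\w, m_\w)}$ is an ergodic model of $T$ that omits $\Theta$, as desired.

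The main work is the bookkeeping: one must schedule the infinitely many tasks so that each recurs cofinally along the levels while every individual transformation remains applicable at every stage, and one must check that interleaving such disparate transformations still yields a bona fide regular layering --- the compatibility being exactly what the clauses in the definitions of \cref{transformation} are designed to guarantee. No single step is deep, since the substantive analysis was already carried out in the ``forcing'' lemmas; the only delicate point is ensuring that the languages $L_n$ grow quickly enough that every pithy $\Pi_2$ sentence of $T$ and every qf-type of $\Theta$ eventually becomes expressible at a finite level, so that the asymptotic-satisfaction and asymptotic-omission machinery of \cref{transformation} can be brought to bear.
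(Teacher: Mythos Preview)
Your proposal is correct and follows essentially the same approach as the paper: interleave scrapwork, splitting, satisfaction, and omitting transformations cofinally along the layering and invoke the forcing lemmas of \cref{transformation}. The paper uses a fixed $4$-periodic schedule (scrapwork at stage $4n$, splitting at $4n+1$, witnesses at $4n+2$, omitting and language growth at $4n+3$) in place of your general dovetailing, and folds the language-enlargement into the omitting stage rather than treating it as a separate task, but these are organizational differences only; your extra care in verifying that $\M$ is infinite and that the interleaved system really is a regular layering is implicit in the paper's account.
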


\begin{proof}
	Let $\{Q_j\}_{j \in \w}$ be an enumeration of the relation symbols of $L$, and let $\{p_j\}_{j \in \w}$ be an enumeration of $\Theta$ in which each qf-type is enumerated infinitely often. Let $\{(\forall \xx)(\exists y)\psi_j(\xx, y)\}_{j \in \w}$ be an enumeration of $T$ in which each sentence is enumerated infinitely often and such that for every $j\in\w$, each relation symbol of $\psi_j$ is among $\{Q_k \st k < j\}$.

We construct a continuous regular layering $(\N, m, i)$ in stages, building one level at every stage. 
	At each stage $n$, we will identify a finite $L$-substructure $\M_n$ of $\M$ (necessarily non-redundant). Let $M_n$ denote the underlying set of $\M_n$, and choose the sink $*_n$ to not be in the underlying set $M$ of $\M$. The structure $\N_n$ will be the 
unique $L$-structure with underlying set $M_n \cup *_n$ such that
\begin{itemize}
	\item the sink $*_n$ is a neutral element; 
	\item for any relation symbol $R$ in $L \setminus L_n$ 
		and any tuple $\a$ of length $k$, where $k$ is the arity of $R$,  we have $\N_n \models R(\a)$; and
	\item $\M_n|_{L_n}$ equals the substructure of $\N_n|_{L_n}$ having underlying set $M_n$. 
\end{itemize}

\noindent \ul{Stage $-1$:}\\
Define $L_{-1} \defas \emptyset$. 
Let $\M_{-1}$ be the unique structure on the empty set (which necessarily omits $\Theta$),
and let $m_0(*_{-1}) = 1$.  \\


\noindent \ul{Stage $4n$:} Scratch work. \\
Define $L_{4n} \defas L_{4n-1}$. 
Let $a \in M \setminus M_{4n-1}$ and define $\M_{4n}$ to be $\M$ restricted to $M_{4n} \cup \{a\}$.

Define $i_{4n, 4n-1}$ to be the identity on $M_{4n-1}$ and such that 
$i_{4n, 4n-1}(a) = *_{4n-1}$.
	Finally, let $m_{4n}$ be the probability measure that agrees with $m_{4n-1}$ on all subsets of $M_{4n-1}$ and assigns $m_{4n}(a) = m_{4n}(*_{4n}) = \frac{1}{2} m_{4n-1}(*_{4n-1})$. 
\\


\noindent \ul{Stage $4n+1$:} Duplication. \\
Define $L_{4n+1} \defas L_{4n}$. Let 
$k = |M_{4n}|$ and
fix an enumeration $b_1, \ldots, b_k$ of  $M_{4n}$.
Let $q$ be the quantifier-free $L_{4n+1}$-type of $b_1, \ldots, b_k$.
By hypothesis, the theory $T$ has duplication of quantifier-free formulas.
Hence by \cref{multiple-duplication} there are tuples
$a_1^0\cdots a_k^0$  and
$a_1^1\cdots a_k^1$ containing $2k$-many distinct elements of $M$
such that $\M \models q(a_1^{\alpha(1)}, \dots, a_k^{\alpha(k)})$ for any function $\alpha \colon \{1, \dots, k\} \rightarrow \{0, 1\}$. 
Define $\M_{4n+1}$ to be the $L_A$-substructure of $\M$ with underlying set
\[
	\bigl\{ a^j_\ell \st j \in \{0, 1\} \text{~and~} 1\le \ell \le k\bigr\}.
\]
Finally, let $i_{4n+1, 4n}(a^j_\ell) = b_\ell$ and $m_{4n+1}(a^j_\ell) = \frac12 m_{4n}(b_\ell)$ for $j\in\{0, 1\}$ and $1 \le \ell \le k$. 
\\

\noindent \ul{Stage $4n+2$:} Witnesses to $\Pi_2$-statements. \\
Define $L_{4n+2} \defas L_{4n+1}$. 
Let $B \subseteq M$ be a finite set be such that for every $j \leq n$,
\[
	\M \models (\forall \xx \in M_{4n+1})\,(\exists y \in B)\,\psi_j(\xx, y);
\]
such a $B$ exists, as $\M \models T$ and $(\forall \xx)(\exists y)\psi_j(\xx, y) \in T$.
Define $\M_{4n+2}$ to be $\M$ restricted to have underlying set $M_{4n+1} \cup B$.

Let $i_{4n+2, 4n+1}$ be the identity on $M_{4n+1}$ and such that 
$i_{4n+2, 4n+1}(b) = *_{4n+1}$
for all 
$b \in B \setminus M_{4n+1}$. 
Let $m_{4n+2}$ be the probability measure that agrees with $m_{4n+1}$ on all subsets of $M_{4n+1}$ and evenly assigns positive mass 
to 
$*_{4n+2}$ and to every element of $B \setminus M_{4n+1}$. 
\\

\noindent \ul{Stage $4n+3$:} Omitting qf-types. \\
Define $\M_{4n+3} \defas \M_{4n+2}$. Let $L_{4n+3} \supseteq L_{4n+2}$ be a finite language containing $\{Q_j \st j \le n\}$ and such that 
$\M_{4n+3}$
omits every qf-type in $\{p_j \cap \Lww(L_{4n+3}) \st  j \leq n\}$; 
this is possible
as each qf-type in $\Theta$ is quantifier-free and $\M$ omits every qf-type in $\Theta$.
Let $i_{4n+3, 4n+2} \defas \id$
and
$m_{4n+3} \defas m_{4n+2}$.
\\

Finally, as required, for all $n$, and $k < n$, define $i_{n,k} \defas i_{k+1, k} \circ i_{k+2, k+1}\circ \dots \circ i_{n, n-1}$.
This completes the construction of $(N, m, i)$. \\

For every $n$ the map 
\[
	(L_{4n-1}, N_{4n-1}, m_{4n-1})
	\mapsto
	\bigl(i_{4n, 4n-1}, (L_{4n}, N_{4n}, m_{4n})\bigr)
\]
is a scrapwork transformation, and so by \cref{scrapwork transformation forces regular layering}, $(\N, m, i)$ is a regular layering.

By \cref{mu-is-invariant}, $\mu_{(\N_\w, m_\w)}$ is an invariant probability measure, and by \cref{continuouslemma} it is ergodic.

For every $n$, the map
\[
	(L_{4n}, N_{4n}, m_{4n}) 
	\mapsto 
	(i_{4n+1, 4n}, (L_{4n+1}, N_{4n+1}, m_{4n+1}))
\]
is a splitting transformation, and so by \cref{splitting transformation implies continuous}, $(\N_\w, m_\w)$ is a continuous layering, since $m_\w(*_\w) = 0$.

For every $n$, the map 
\[
	(L_{4n+1}, N_{4n+1}, m_{4n+1})
	\mapsto
	\big(i_{4n+2, 4n+1}, (L_{4n+2}, N_{4n+2}, m_{4n+2}) \bigr)
\]
is a $(\forall \x)(\exists y)\varphi_n(\x, y)$-satisfaction transformation, and so by \cref{satisfaction transformation forces asymptotically satisfied}, $(\N, m, i)$ asymptotically satisfies each sentence in $T$. Hence
by \cref{Cor:asymptotically satisfying a theory},
$\mu_{(\N_\w, m_\w)}$ is concentrated on $T$.

For every $n$, the map 
\[
	(L_{4n+2}, N_{4n+2}, m_{4n+2})
	\mapsto
	\bigl(i_{4n+3, 4n+2}, (L_{4n+3}, N_{4n+3}, m_{4n+3})\bigr)
\]
is a omitting-$p_n(\x)$ transformation,
and so by \cref{omitting type transformation forces omitting types}, $(\N, m, i)$ asymptotically omits each qf-type in $\Theta$.
Hence by \cref{Cor:asymptotically omitting types},
$\mu_{(\N_\w, m_\w)}$ omits $\Theta$.
\end{proof}

Using this layering, we may now build an ergodic model of the theory.

\begin{proposition}
\label{Proposition: Existence of Invariant Measure}
Let $L$ be a countable language.
	Suppose $A$ is a countable fragment of $\Lwow(L)$ and $T$ is a complete $A$-theory.
	If $T$ has trivial $A$-definable closure
	then there is an ergodic model of $T$.
\end{proposition}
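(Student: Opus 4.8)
The plan is to reduce to the relational, pithy $\Pi_2$ setting via the Morleyization machinery, apply \cref{layering-existence} to get an ergodic structure in that setting, and transfer back. First I would invoke \cref{bijection-preserves-invariant-measures}, which reduces the existence of an ergodic model of $T$ to the existence of an ergodic model of $T_A^+$; and I would invoke \cref{bijection-preserves-trivial-dcl}, which tells us that since $T$ has trivial $A$-definable closure, the theory $T_A^+$ has trivial $A^+$-definable closure. So it suffices to produce an ergodic model of $T_A^+$. Now $T_A^+$ lives in the relational language $L_A$, and an $L_A$-structure satisfies $T_A^+$ exactly when it satisfies the first-order pithy $\Pi_2$ theory $T_A$ and omits the countable collection of qf-types $\Theta_A$.

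Next I would pick a witnessing classical structure. Since $T_A^+$ is consistent (as $T$ is), by the downward L\"owenheim--Skolem theorem for $\Lwow$ it has a countable model $\cK_0 \models T_A \cup \{(\forall\xx)\neg p(\xx) : p \in \Theta_A\}$, i.e.\ a countable $L_A$-structure satisfying $T_A$ and omitting $\Theta_A$. This structure is non-redundant because $\Th_A \subseteq T_A$ is a non-redundant theory (\cref{nonredundant-ThA}). The one subtlety: \cref{layering-existence} requires $T_A$ to have \emph{duplication of quantifier-free formulas}, which is a property of the theory, not of $\cK_0$. This is where trivial definable closure is used: by \cref{bijection-preserves-trivial-dcl} the theory $T_A^+$ has trivial $A^+$-definable closure, hence in particular $T_A^+$ has trivial $\varphi$-definable closure for every non-redundant quantifier-free first-order $L_A$-formula $\varphi$; and since every non-redundant quantifier-free $\varphi(\xx,y)$ consistent with $T_A$ is consistent with $T_A^+$ (as models of $T_A^+$ are dense among models of $T_A$ in the relevant sense, or more directly: if $(\exists\xx,y)\varphi$ fails to be provable from $T_A^+$ it may still... — actually here one must be slightly careful). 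A cleaner route: apply \cref{trivial dcl implies duplications} to conclude $T_A^+$ has duplication of formulas in $A^+$, in particular duplication of quantifier-free first-order formulas; and since $T_A \subseteq T_A^+$, this passes down — if $\varphi$ is quantifier-free, non-redundant, consistent with $T_A$, then either $\varphi$ is consistent with $T_A^+$, in which case $T_A^+$ (and a fortiori any model-theoretic consequence used in the layering) duplicates it, or $\varphi$ is inconsistent with $T_A^+$, in which case the relevant qf-type is automatically omitted by our chosen $\cK_0$ and the formula never arises at any stage of the construction as one realized in $\cK_0$. I would package this carefully so that the duplication hypothesis of \cref{layering-existence} is met with $\M = \cK_0$, $T = T_A$, $\Theta = \Theta_A$.

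Then I would apply \cref{layering-existence} directly: it produces a continuous regular layering $(\N, m, i)$ such that $\mu_{(\N_\w, m_\w)}$ is an ergodic model of $T_A$ that omits $\Theta_A$. Since satisfying $T_A$ and omitting $\Theta_A$ together mean satisfying $T_A^+$, the measure $\mu_{(\N_\w, m_\w)}$ is an ergodic model of $T_A^+$. Finally, transferring back across the Borel bijection of \cref{borel-bijection} — or just re-invoking \cref{bijection-preserves-invariant-measures} in the direction we now need — yields an ergodic model of $T$, completing the proof.

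The main obstacle I anticipate is the bookkeeping in the previous paragraph: \cref{layering-existence} is stated for a theory $T$ with duplication of quantifier-free formulas and a \emph{single} witnessing structure $\M$ that models $T$ and omits $\Theta$, whereas what we have directly is trivial $A$-definable closure for a possibly-incomplete-looking $T_A^+$ over the fragment $A^+$. Reconciling ``trivial $A^+$-definable closure of $T_A^+$'' with ``$T_A$ has duplication of quantifier-free formulas'' requires chasing through \cref{lifting-of-provability}, \cref{trivial-dcl-formulas}, \cref{trivial dcl implies duplications}, and being careful that the quantifier-free $L_A$-formulas relevant to the layering construction are exactly the ones controlled by these lemmas (and that the ones not controlled are harmless because they are already omitted). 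Once that is pinned down, the rest is a direct application of machinery established in \cref{sec:layerings} and \cref{pithy}.
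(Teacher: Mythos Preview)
Your proposal is correct and follows essentially the same route as the paper: Morleyize to reduce to $T_A^+$ in the relational language $L_A$, take a model (the paper Morleyizes a model $\cK$ of $T$ to obtain $\M = \cK_A|_{L_A}$; your direct appeal to L\"owenheim--Skolem on $T_A^+$ gives the same thing via \cref{borel-bijection}), apply \cref{layering-existence}, and transfer back with \cref{bijection-preserves-invariant-measures}. The subtlety you flag---that trivial $A^+$-definable closure and hence duplication is literally a property of $T_A^+$ rather than of the pithy $\Pi_2$ theory $T_A$---is handled tersely in the paper (it passes from \cref{bijection-preserves-trivial-dcl} straight to ``$T_A$ has duplication of quantifier-free formulas''); your observation that the only qf-formulas ever needing duplication in the layering construction are those realized in $\M$, hence consistent with $T_A^+$, is a slightly more careful justification of the same step.
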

\begin{proof}
	Recall from \cref{pithy} the relational language $L_A$, the non-redundant pithy $\Pi_2$ first-order $L_A$-theory $\Th_A$, and the countable collection of qf-types $\Theta_A$ of $L_A$.  Further recall the pithy $\Pi_2$ non-redundant $L_A$-theory $T_A = \Th_A \cup \, \{R_\varphi \st \varphi \in T\}$. 

	Suppose $T$ has trivial $A$-definable closure.  Then $T_A$ has trivial $A^+$-definable closure as well by \cref{bijection-preserves-trivial-dcl}.  By \cref{bijection-preserves-invariant-measures} there is an ergodic model of $T$ if and only if there is an ergodic model of $T_A^+$.  Hence our problem is reduced to that of constructing an ergodic model of $T_A^+$, i.e., an ergodic $L_A$-structure almost surely satisfying $T_A$ and omitting $\Theta_A$. 

	The $A$-theory $T$ is countable and consistent; hence
	let $\cK$ be a model of $T$.  By \cref{theory-Morleyization-lift}, the $(L_A \cup L)$-structure $\cK_A$ is a model of $T_A$ that omits $\Theta_A$. Define the $L_A$-structure $\M\defas \cK_A|_{L_A}$, i.e., the pithy $\Pi_2$ Morleyization of $\cK$, which is also  a model of $T_A$ that omits $\Theta_A$.  Further, $\M$ is non-redundant (because $\Th_A$ is non-redundant by \cref{nonredundant-ThA}), as is every finite substructure of $\M$.  

	The theory $T_A$ has trivial $A^+$-definable closure, 
and so by \cref{trivial dcl implies duplications}
it also has duplication of formulas in $A^+$.
In particular, $T_A$ has duplication of quantifier-free formulas.
	
	By \cref{layering-existence}, there is a continuous regular layering $(\N, m, i)$ such that $\mu_{(\N_\w, m_\w)}$ is an ergodic model of $T_A$ that omits $\Theta_A$,
	as desired.
\end{proof}

\section{Non-existence of ergodic structures}
\label{nonexistence}
We now show that if a complete $A$-theory has non-trivial $A$-definable closure 
then it has no ergodic model.
As in \cref{sec:existence},
we work with
countable \emph{relational} languages $L$, but in the proof of \cref{main-result} in \cref{classification}, we will see that this holds for arbitrary countable languages.

\begin{proposition}
\label{condition for no invariant measure}
Let $L$ be a countable relational language, and let $A$ be a countable fragment of $\Lwow(L)$.
Suppose $T$ is a complete $A$-theory that has non-trivial $A$-definable closure. Then there does not exist an ergodic model of
	$T$.
\end{proposition}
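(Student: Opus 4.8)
The plan is to derive a contradiction from the existence of an ergodic model $\mu$ of $T$. By \cref{nonredundant-nontrivial-dcl}, pick a non-redundant formula $\varphi(\xx, y) \in A$ witnessing non-trivial definable closure, so that $T \models (\exists \xx)(\exists^{=1} y)\, \varphi(\xx, y)$. Since $\mu$ is an ergodic model of $T$, we have $\mu\bigl(\llrr{(\exists \xx)(\exists^{=1}y)\,\varphi(\xx,y)}\bigr) = 1$, so almost surely a random structure $\M$ sampled from $\mu$ contains a tuple $\aa$ and an element $b$ with $\M \models \varphi(\aa, b)$ and $b$ the unique such element. Write $|\xx| = k$; the idea is to exploit that $\mu$ is invariant under $\sym$ and to count.

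First I would set up the counting argument. Fix the event $E_{\aa, b}$ that $\M \models \varphi(\aa, b)$, for a fixed non-redundant tuple $\aa$ of length $k$ and element $b \notin \aa$ from $\Nats$. By invariance of $\mu$ under $\sym$, the measure $\mu(E_{\aa, b})$ depends only on which coordinates of $\aa$ coincide with $b$ — but $\varphi$ is non-redundant, so $b$ is distinct from all entries of $\aa$, and hence $\mu(E_{\aa, b})$ takes a single common value, call it $c$, over all non-redundant $(k+1)$-tuples $(\aa, b)$. Since almost surely \emph{every} non-redundant $k$-tuple $\aa$ has \emph{some} witness $b$ (because $T \models (\forall$-free consequence here is subtler — actually $T$ only guarantees existence of \emph{one} tuple $\aa$ with a unique witness, not that every tuple has a witness)), I need to be careful: what $T \models (\exists \xx)(\exists^{=1}y)\,\varphi(\xx,y)$ gives is that almost surely there is at least one pair $(\aa, b)$ with $\M \models \varphi(\aa, b)$ and $b$ the unique witness for that particular $\aa$. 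So the event $F \defas \bigcup_{\aa, b} \{\M : \M \models \varphi(\aa,b) \wedge (\forall y)(\varphi(\aa, y) \to y = b)\}$ has $\mu(F) = 1$, where the union is over non-redundant tuples. Each individual event in this union has measure $c' \ge 0$, again a single common value by invariance (the ``unique witness'' condition is preserved by permutations fixing nothing relevant). If $c' > 0$, then summing over the infinitely many disjoint-in-the-$\aa$-coordinate... no, they are not disjoint. Instead: fix $\aa$, let $G_\aa$ be the event that $\aa$ has a unique witness; $\mu(G_\aa) = c'$ for all non-redundant $\aa$. For $\mu(F) = 1$ we need $\bigcup_\aa G_\aa$ to have full measure, which is automatic if $c' > 0$ is not required — but if $c' = 0$ then $\mu(F) \le \sum_\aa \mu(G_\aa) = 0$, contradiction. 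So $c' > 0$.

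Now the key step: if $\aa$ has a unique witness $b = f(\aa)$, then consider permuting only $b$ while fixing $\aa$ pointwise. There are infinitely many elements of $\Nats \setminus \aa$, and the permutation swapping $b$ with any such $b'$ fixes $\aa$ and sends the event ``$\aa$'s unique witness is $b$'' to ``$\aa$'s unique witness is $b'$''. These events (over varying $b'$) are pairwise disjoint (a unique witness is unique!), all of measure $c' > 0$ by invariance, and there are infinitely many of them — contradicting that $\mu$ is a probability measure. This is the crux, and I expect it to be the cleanest part once the invariance bookkeeping is done. The main obstacle is the careful handling of which permutations preserve which events: I must ensure the relevant permutation fixes the entire tuple $\aa$ pointwise (so the formula $\varphi(\aa, -)$ is unchanged) while moving $b$, and that non-redundancy of $\varphi$ guarantees $b \notin \aa$ so there is genuinely room to move $b$ among infinitely many values. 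One should also confirm that ``$b$ is the unique witness for $\aa$'' is a Borel condition — it is, since it is $\llrr{\varphi(\aa, b) \wedge (\forall y)(\varphi(\aa, y) \to y = b)}$, Borel by \cite[Proposition~16.7]{MR1321597}. Assembling these, $\mu$ cannot be a probability measure, so no ergodic model of $T$ exists.
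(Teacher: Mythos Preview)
Your argument is essentially the paper's own: fix a non-redundant witness formula, show the event that a fixed tuple has a unique witness has positive measure (your $c'>0$, the paper's $\mu(P)>0$), then use invariance under permutations fixing that tuple to get infinitely many disjoint events of equal measure summing to a finite positive number. The paper packages the last step by conditioning on $P$ and summing the events $\llrr{\varphi(0,\dots,k-1,i)}$ to $1$, whereas you work unconditionally, but the content is identical.

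One slip to correct: in your final paragraph you write that the events ``$\aa$'s unique witness is $b$'' all have measure $c'$. They do not --- $c'$ is the measure of their disjoint union $G_\aa$. What invariance gives is that they share a \emph{common} value, say $d$; then $c' = \sum_{b\notin\aa} d$ forces $d=0$ (else the sum is infinite), whence $c'=0$, contradicting $c'>0$. With that relabeling the proof is complete.
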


\begin{proof}
	By Lemmas~\ref{trivial-dcl-formulas} and
	\ref{nonredundant-nontrivial-dcl}, there is some non-redundant formula $\varphi(\xx, y)$ 
	such that
	$
	T \models (\exists \xx) (\exists^{=1}y)\, \varphi(\xx, y).
	$

Suppose, towards a contradiction, that there is an 
ergodic model $\mu$ of $T$.
	Define $P \defas \bigllrr{(\exists^{=1} y)\varphi(0, 1, \dots, |\xx|-1, y)}$.
First note that if 
	$\mu(P )=0$,
	then by invariance
\[
	\mu\bigl(\bigllrr{(\exists^{=1}y)\varphi(n_0, n_1, \dots, n_{|\xx|-1}, y)}\bigr) = 0
\]
for any distinct elements  $n_0, n_1, \dots, n_{|\xx|-1}\in \w$.
	Hence $\mu\bigl(\bigllrr{(\exists\xx)(\exists ^{=1} y)\varphi(\xx, y)}\bigr) = 0$, contradicting the fact that $T \models (\exists \xx) (\exists^{=1}y)\, \varphi(\xx, y)$.
	Therefore  $\mu(P) > 0$.

Define the probability measure $\mu^\star$ to be the conditional distribution of $\mu$ given that $P$ holds, i.e.,
	\[\mu^\star(X) = \dfrac{\mu(P \cap X)}{\mu(P)}
\]
for all Borel sets $X$. 
	Note that $\mu^\star$ is invariant under any permutation of $\w$ that fixes $0, 1, \dots, |\xx|-1$.

By the definition of $P$, we have 
\[
	\llrr{\varphi(0, 1, \dots, |\xx|-1, i)} \cap
	\llrr{\varphi(0, 1, \dots, |\xx|-1, j)} \cap P = \emptyset
\]
for $i,j$ satisfying $|\xx| - 1 < i < j$. Because $\varphi$ is non-redundant, we also have 
\[
	\bigcup_{i \leq |\xx|-1} \bigllrr{\varphi(0, 1, \dots, |\xx|-1, i)} = \emptyset.
\]
Hence
\[
	1 = \mu^\star(P)
	= \sum_{i \ge |\xx|}\mu^\star\bigl(\bigllrr{\varphi(0, 1, \dots, |\xx|-1, i)}\bigr).
\]

But if $\alpha = \mu^\star\bigl(\bigllrr{\varphi(0, 1, \dots, |\xx|-1, |\xx|)}\bigr)$ then by invariance of $\mu^\star$ we also have $\alpha = \mu^\star\bigl(\bigllrr{\varphi(0, 1, \dots, |\xx|-1, i)}\bigr)$ for any $i \geq |\xx|$.
Hence $1 = \sum_{i \geq |\xx|} \alpha$, which is a contradiction.

Therefore there is no ergodic model of $T$.
\end{proof}

\section{Classification of ergodic structures}
\label{classification}
Putting
the results of Sections~\ref{sec:existence} and \ref{nonexistence} together,
we obtain our main theorem.

\proofof{\cref{main-result}}
	Let $L$ be a countable language (not necessarily relational), and let $\Sigma \subseteq \Lwow(L)$ be countable.

	First observe that (2) immediately implies (1). On the other hand, (1) implies (2) by \cref{ergodic-lemma}.
	Further, (4) implies (3) by letting $A$ be any countable fragment such that $\Sigma\subseteq A$ (for example, the fragment generated by $\Sigma$).
To conclude the proof, we will show that (3) implies (2) and that 
	(2) implies (4).

Let $A$ and $T$ be as in (3).
	Consider the countable relational language $L_A$, the countable fragment $A^+$,
	and the 
	$A^+$-theory $T_A^+$. Recall, by
	\cref{TAplus-complete-Aplus-theory}, that $T_A^+$ is a complete $A^+$-theory.
	By \cref{bijection-preserves-trivial-dcl},
	$T_A^+$ has trivial $A^+$-definable closure,
		as $T$ has trivial $A$-definable closure.
Hence by \cref{Proposition: Existence of Invariant Measure}, there is
	an ergodic model of $T_A^+$.
By \cref{bijection-preserves-invariant-measures},
	there is also an ergodic model of $T$, and hence of $\Sigma$,
	and so (2) holds.

	Now suppose that (2) holds, and let $\mu$ be an ergodic 
	model of $\Sigma$.
	Let $A$ be an arbitrary countable fragment such that $\Sigma\subseteq A$.
	By \cref{complete-consistent},
$\Th(\mu)$ is a complete 
$\Lwow(L)$-theory.
	Hence $\Th(\mu) \cap A$ is a complete $A$-theory 
	that admits an invariant measure,
	and is such that $\Sigma \subseteq \Th(\mu) \cap A$.
	By \cref{condition for no invariant measure}, the theory $\Th(\mu) \cap A$ must have trivial $A$-definable closure, and so (4) holds.
\Endproofof

We obtain \cref{main-result-FO} as a corollary by specializing to 
the fragment $\Lww(L)$ of first-order $L$-formulas.

\proofof{\cref{main-result-FO}}
Suppose $\Sigma$ admits an invariant measure; then (1) of \cref{main-result} holds. Hence taking $A = \Lww(L)$ in (4), there is a complete first-order theory $T \subseteq \Lww(L)$ with $\Sigma\subseteq T$ such that $T$ has trivial $\Lww(L)$-definable closure.

Conversely, suppose $T \subseteq\Lww(L)$ is a complete first-order theory such that
$\Sigma \subseteq T$  and $T$ has
has trivial $\Lww(L)$-definable closure. 
Then (3) of \cref{main-result} holds. Therefore by (1) there is an invariant measure concentrated on $T$ and hence on $\Sigma$.
\Endproofof

\section*{Acknowledgements}

The authors would like to thank Alex Kruckman for helpful discussions and detailed comments on a draft, and Haim Gaifman for illuminating conversations about the genesis of his work relating logic and probability. 

This research was facilitated by participation in 
the Trimester Program on Universality and Homogeneity of the Hausdorff Research Institute for Mathematics at the University of Bonn (September--December 2013), the 
LMS--EPSRC Durham Symposium on Permutation Groups and Transformation Semigroups at Durham University (July 2015), the workshop on 
Logic and Random Graphs at the Lorentz Center (August--September 2015), and the workshop on Homogeneous Structures at the Banff International Research Station for Mathematical Innovation and Discovery (November 2015).

Work on this publication by C.\,F.\ was made possible through the support of ARO grant W911NF-13-1-0212 and grants from the John Templeton Foundation and Google. The opinions expressed in this publication are those of the authors and do not necessarily reflect the views of the U.S.\ Government or the John Templeton Foundation.


\def\cprime{$'$} \def\polhk#1{\setbox0=\hbox{#1}{\ooalign{\hidewidth
  \lower1.5ex\hbox{`}\hidewidth\crcr\unhbox0}}}
  \def\polhk#1{\setbox0=\hbox{#1}{\ooalign{\hidewidth
  \lower1.5ex\hbox{`}\hidewidth\crcr\unhbox0}}} \def\cprime{$'$}
  \def\cprime{$'$} \def\cprime{$'$} \def\cprime{$'$} \def\cprime{$'$}
  \def\cprime{$'$} \def\cprime{$'$} \def\cprime{$'$}
\providecommand{\bysame}{\leavevmode\hbox to3em{\hrulefill}\thinspace}
\providecommand{\MR}{\relax\ifhmode\unskip\space\fi MR }
\providecommand{\MRhref}[2]{%
  \href{http://www.ams.org/mathscinet-getitem?mr=#1}{#2}
}
\providecommand{\href}[2]{#2}

\end{document}